\documentclass[12pt]{article}

\usepackage[bb=fourier,cal=euler,scr=rsfs]{mathalfa}	
\usepackage{enumitem}       
\usepackage{graphicx}
\usepackage{indentfirst}
\usepackage{color}
\usepackage{xcolor}
\usepackage{hyperref}
\usepackage{mathtools}      
\usepackage{mathabx}        
\usepackage{amsthm}

\textwidth=17.2 true cm
\textheight=24. true cm
\voffset=-2. true cm
\hoffset = -1.5 true cm

\newtheorem{theoremalph}{Theorem}

\newtheorem{Coro}[theoremalph]{Corollary}
\newtheorem{Theorem}{Theorem}[section]
\newtheorem*{Theorem A}{Theorem A}
\newtheorem*{Theorem A'}{Theorem A'}

\newtheorem{Definition}[Theorem]{Definition}
\newtheorem{Proposition}[Theorem]{Proposition}
\newtheorem{Lemma}[Theorem]{Lemma}

\newtheorem{Question}{Question}

\newtheorem{Remark-numbered}{Remark}
\newtheorem{Corollary}[Theorem]{Corollary}

\newtheorem*{Claim}{Claim}
\newtheorem{Claim-numbered}{Claim}

 \def\NN{{\mathbb N}} 

 \def\RR{{\mathbb R}}

  \def\cG{{\cal G}}  
    
\def\cC{{\cal C}}    \def\cU{{\cal U}}

    \def\cX{{\cal X}}

\def\dim{\operatorname{dim}}

\def\diam{\operatorname{Diam}}
\def\Sing{\operatorname{Sing}}

\begin{document}

\title{Robust transitivity of singular hyperbolic attractors}

\author{Sylvain Crovisier\footnote{S.C. was partially supported by  the ERC project 692925 \emph{NUHGD}.} \and Dawei Yang\footnote{D.Y.  was partially supported by NSFC (11822109, 11671288, 11790274, 11826102).}}

\date{\today}

\maketitle


\abstract{Singular hyperbolicity is a weakened form of hyperbolicity that has been introduced for vector fields in order to allow non-isolated singularities inside the non-wandering set. A typical example of a singular hyperbolic set is the Lorenz attractor.
However, in contrast to uniform hyperbolicity, singular hyperbolicity does not immediately imply robust topological properties, such as the transitivity.

In this paper, we prove that open and densely inside the space of $C^1$ vector fields of a compact manifold,
any singular hyperbolic attractors is robustly transitive.}

\section{Introduction}
Lorenz \cite{Lo} in 1963 studied some polynomial ordinary differential equations in $\RR^3$. He found some strange attractor with the help of computers. By trying to understand the chaotic dynamics in Lorenz' systems, \cite{ABS, Gu, GW} constructed some geometric abstract models which are called \emph{geometrical Lorenz attractors}: these are robustly transitive non-hyperbolic chaotic attractors with singularities in three-dimensional manifolds.

In order to study attractors containing singularities for general vector fields, Morales-Pacifico-Pujals \cite{MPP} first gave the notion of \emph{singular hyperbolicity} in dimension 3. This notion can be adapted to the higher dimensional case, see \cite{BD,CDYZ,MeM08,ZGW08}. 
In the absence of singularity, the singular hyperbolicity coincides with the usual notion of uniform hyperbolicity; in that case it has many nice dynamical consequences: spectral decomposition, stability, probabilistic description,...
But there also exist open classes of vector fields exhibiting singular hyperbolic attractors with singularity:
the geometrical Lorenz attractors are such examples.
In order to have a description of the dynamics of general flows,
we thus need to develop a systematic study of the singular hyperbolicity in the presence of singularity. This paper contributes to that goal.

We do not expect to describe the dynamics of arbitrary vector fields.
Instead, one considers the Banach space ${\cal X}^r(M)$ of  all $C^r$ vector fields on a compact manifold $M$
without boundary
and focus on a subset $\cG$ which is dense and as large as possible.
A successful approach consists in considering subsets that are $C^1$-residual (i.e. containing a dense G$_\delta$ subset
with respect to the $C^1$-topology), but this does not handle immediately smoother systems.
For that reason it is useful to work with subsets $\cG\subset {\cal X}^1(M)$ that are $C^1$-open and $C^1$-dense and
to address for each dynamical property the following question:
\smallskip

\emph{Knowing that a given property holds on a $C^1$-residual subset of vector fields,
is it satisfied on a $C^1$-open and dense subset?}
\medskip

\paragraph{\bf Precise setting.}
Given a vector field $X\in{\cal X}^1(M)$, the flow generated by $X$ is denoted by $(\varphi_t^X)_{t\in \RR}$, and sometimes by $(\varphi_t)$ if there is no confusion.  A point $\sigma$ is a \emph{singularity} of $X$ if $X(\sigma)=0$. A point $p$ is \emph{periodic} if it is not a singularity and there is $T>0$ such that $\varphi_T(p)=p$. 
We denote by ${\rm Sing}(X)$ the set of singularities and by ${\rm Per}(X)$ the set of periodic orbits of $X$. The union ${\rm Crit}(X):={\rm Sing}(X)\cup{\rm Per}(X)$
is the set of \emph{critical elements} of $X$.

We will mainly discuss the recurrence properties of the dynamics. An invariant compact set $\Lambda$ is
\emph{transitive} if it contains a point $x$ whose positive orbit is dense in $\Lambda$.
More generally $\Lambda$ is \emph{chain-transitive} if for any $\varepsilon>0$ and $x,y\in \Lambda$,
there exists $x_0=x, x_1,\dots, x_n=y$ in $\Lambda$ and $t_0, t_1,\dots, t_{n-1}\ge1$ such that
$d(x_{i+1}, \varphi_{t_i}(x_i))<\varepsilon$ for each $i=0,\dots, n-1$. A compact invariant set $\Lambda$ is said to be a \emph{chain-recurrence class} if it is chain-transitive, and is not a proper subset of any other chain-transitive compact invariant set.
The chain-recurrence classes are pairwise disjoint.

Among invariant sets, important ones are those satisfying an attracting property.
An invariant compact set $\Lambda$ is an \emph{attracting set} is there exists a neighborhood $U$ such that
$\cap_{t>0} \varphi_t(U)=\Lambda$ and an \emph{attractor} if it is a transitive attracting set. More generally $\Lambda$ is \emph{Lyapunov stable} if for any neighborhood $V$
there exists a neighborhood $U$ such that $\varphi_t(U)\subset V$ for all $t>0$.

We will also study the stability of these properties. For instance a set $\Lambda$ is \emph{robustly transitive}
if there exist neighborhoods $U$ of $\Lambda$ and $\cU$ of $X$ in $\cX^1(M)$ such that for any $Y\in \cU$,
the maximal invariant set $\cap _{t\in \RR} \varphi^Y_t(\overline U)$ is transitive, and coincides with $\Lambda$ when $Y=X$.
\medskip

As said before, singular hyperbolicity is a weak notion of hyperbolicity that has been introduced in order to characterize some robust dynamical properties.
A compact invariant set $\Lambda$ is \emph{singular hyperbolic} if for the flow $(\varphi_t)$ generated by either $X$ or $-X$, there are a continuous $D\varphi_t$-invariant splitting $T_\Lambda M=E^{ss}\oplus E^{cu}$ and $T>0$ such that for any $x\in\Lambda$:
\begin{itemize}
\item $E^{ss}$ is contracted: $\|D\varphi_T|_{E^{ss}(x)}\|\le 1/2$ .
\item $E^{ss}$ is dominated by $E^{cu}$: $\|D\varphi_T|_{E^{ss}(x)}\|\|D\varphi_{-T}|_{E^{cu}(\varphi_T(x))}\|\le 1/2$.
\item $E^{cu}$ is area-expanded: $|{\det}D\varphi_{-T}|_{P}|\le 1/2$ for any two-dimensional plane $P\subset E^{cu}(x)$.
\end{itemize}
Note that if $\Lambda$ is a singular hyperbolic set, and $\Lambda\cap {\rm Sing}(X)=\emptyset$, then $\Lambda$ is a hyperbolic set.

Some robust properties or generic assumptions imply the singular hyperbolicity.
In dimension $3$, robustly transitive sets are singular hyperbolic~\cite{MPP} and
any generic vector field $X\in \cX^1(M)$ far from homoclinic tangencies supports a global singular hyperbolic structure~\cite{CY1,CY2}.
In higher dimension, the transitive attractors of generic vector field $X\in \cX^1(M)$ satisfying the star property are singular hyperbolic~\cite{SGW}.
\medskip

\paragraph{\bf Statement of the results.}
It is well known that for uniformly hyperbolic sets:
\begin{itemize}
\item  chain-transitivity and local maximality
(i.e. the set coincides with the maximal invariant set in one of its neighborhoods)
imply the robust transitivity;
\item Lyapunov stability implies that the set is an attracting set.
\end{itemize}
We do not know whether these properties extend to general singular hyperbolic sets but this has been proved in~\cite{pyy}
for $C^1$-generic vector fields: generically Lyapunov stable chain-recurrence classes which are singular hyperbolic are
transitive attractors.
We show that this holds robustly.

\begin{theoremalph}\label{Thm:main}
There is an open and dense set ${\cal U}\in{\cal X}^1(M)$ such that for any $X\in{\cal U}$, any singular hyperbolic Lyapunov stable chain-recurrence class $\Lambda$ of $X$ is a robustly transitive attractor.
\end{theoremalph}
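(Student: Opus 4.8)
The plan is to combine three ingredients: the $C^1$-generic result of~\cite{pyy} (Lyapunov stable singular hyperbolic chain-recurrence classes are transitive attractors), a robustness-of-singular-hyperbolicity argument, and a genericity-to-open-and-density reduction. The key point is that singular hyperbolicity is itself a $C^1$-open condition for the maximal invariant set in a fixed neighborhood. So I would start by showing:

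\emph{Step 1 (Persistence of the attracting set and of singular hyperbolicity).} Let $X_0$ lie in the residual set given by~\cite{pyy}, and let $\Lambda$ be a singular hyperbolic Lyapunov stable chain-recurrence class; by~\cite{pyy} it is a transitive attractor, so there is an open set $U$ with $\overline{U}$ a trapping region ($\varphi_1^{X_0}(\overline{U})\subset \operatorname{Int} U$) and $\Lambda=\bigcap_{t>0}\varphi_t^{X_0}(\overline U)$. Trapping regions are $C^1$-robust, so for a $C^1$-neighborhood $\mathcal V$ of $X_0$ the continuation $\Lambda_Y=\bigcap_{t\in\mathbb R}\varphi_t^Y(\overline U)$ is a well-defined attracting set depending upper-semicontinuously on $Y$. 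Since singular hyperbolicity of the maximal invariant set in a fixed isolating block is a $C^1$-open property (the dominated splitting, the contraction of $E^{ss}$ and the area expansion of $E^{cu}$ all persist by standard cone-field arguments, using that $T$ is fixed), we may shrink $\mathcal V$ so that $\Lambda_Y$ is singular hyperbolic for all $Y\in\mathcal V$.

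\emph{Step 2 (Robust transitivity on a residual subset, then openness).} For $Y$ in the residual set of~\cite{pyy} intersected with $\mathcal V$, the set $\Lambda_Y$ is still Lyapunov stable (being an attracting set) and singular hyperbolic, hence — once we know it is a single chain-recurrence class — it is a transitive attractor. The subtlety is that a priori $\Lambda_Y$ could split into several chain-recurrence classes for nearby $Y$. To rule this out robustly, I would invoke the structure theory of singular hyperbolic attracting sets: an attracting singular hyperbolic set whose maximal invariant set in $\overline U$ is all of $\Lambda_Y$ has finitely many chain-recurrence classes, each of them a singular hyperbolic attractor, and by upper semicontinuity and Lyapunov stability of $\Lambda_{X_0}=\Lambda$ these classes must converge to $\Lambda$ as $Y\to X_0$; a connectedness/no-small-chain-class argument (using that $\Lambda$ is itself a single class, together with the fact that singular hyperbolic attractors cannot accumulate on one another within a fixed isolating block) forces $\Lambda_Y$ to be a single chain-recurrence class for $Y$ close to $X_0$. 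Hence on a residual subset of a $C^1$-neighborhood of $X_0$, $\Lambda_Y$ is a transitive attractor. Transitivity of the maximal invariant set in a fixed block, for a singular hyperbolic set, is then a $C^1$-open condition: this is the analogue of Bowen's shadowing/specification argument in the singular hyperbolic setting, and it upgrades "transitive on a residual set" to "transitive on an open set". Intersecting, we get a $C^1$-open set $\mathcal W\subset\mathcal V$ on which $\Lambda_Y$ is robustly transitive. In particular $X_0$ has a neighborhood that is contained in the closure of the set $\mathcal U$ of vector fields all of whose singular hyperbolic Lyapunov stable chain-recurrence classes are robustly transitive attractors.

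\emph{Step 3 (Assembling the open and dense set).} Define $\mathcal U$ to be the set of $X\in\mathcal X^1(M)$ such that every singular hyperbolic Lyapunov stable chain-recurrence class of $X$ is a robustly transitive attractor. The argument above shows $\mathcal U$ contains a $C^1$-neighborhood of every point of the~\cite{pyy}-residual set, so $\mathcal U$ has nonempty interior near a dense set; a Baire/compactness argument — there are only finitely many such classes for $X\in\mathcal U$, each robust, so the defining property is open — shows $\mathcal U$ is $C^1$-open, and it is dense because it contains the residual set of~\cite{pyy}. This gives the theorem.

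The main obstacle is Step 2: controlling that the attracting set $\Lambda_Y$ does not break into several chain-recurrence classes under perturbation, and establishing that transitivity (not merely chain-transitivity) of a singular hyperbolic attracting set in a fixed isolating block is a $C^1$-open property. This requires a genuinely singular-hyperbolic replacement for the classical hyperbolic tools (local product structure, shadowing, specification), handling the singularities inside the set; this is where the bulk of the technical work lies.
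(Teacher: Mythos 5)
Your high-level architecture (start from the residual set of~\cite{pyy}, use robustness of singular hyperbolicity and trapping regions, then upgrade residual to open-and-dense) matches the paper's. But Step~2, which you yourself flag as ``where the bulk of the technical work lies,'' is the entire content of the theorem and is left as a black box. You invoke three ingredients that do not exist as citable results: (a) a ``structure theory'' giving a finite chain-recurrence-class decomposition of a singular hyperbolic attracting set; (b) the claim that singular hyperbolic attractors cannot accumulate within a fixed isolating block, so the attracting set cannot split into several classes under perturbation; and (c) an ``analogue of Bowen's shadowing/specification'' showing transitivity of the maximal invariant set in a fixed block is a $C^1$-open condition for singular hyperbolic sets. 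Point (c) is essentially a restatement of the theorem you are trying to prove, so it cannot be invoked. The paper fills exactly this gap with two new robustly-stated results about a fixed periodic orbit $\gamma\subset C(\sigma)$: Theorem~\ref{Thm:unstable-dense} (for nearby $Y$, $W^{ss}_{loc,Y}(x)$ meets $W^u_Y(\gamma_Y)$ for every $x\in C(\sigma_Y)$ off the singular unstable manifolds, proved via the shadowing lemma on the regular part, the connecting lemma plus the strong-stable foliation near singular stable manifolds, and a compactness/continuation argument) and Theorem~\ref{Thm:stable-dense} (robust density of $W^s_Y(\gamma_Y)$, proved by an area-expansion/cone-field/holonomy iteration on cross sections, carefully tracking that a $cu$-disc cut by $W^s_{loc}(\rho)$ still keeps at least a third of its inner radius). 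Together these show $C(\sigma_Y)=H(\gamma_Y)$ for all nearby $Y$, which is the substitute for local product structure. Your outline correctly locates the obstacle but supplies no mechanism to overcome it.

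There is also a gap in Step~3. Lyapunov stability of a chain-recurrence class is not a $C^1$-open hypothesis, so defining $\mathcal U$ as ``all vector fields whose singular hyperbolic Lyapunov stable classes are robustly transitive attractors'' and then asserting openness does not follow from the robustness of the individual attractors you built near the~\cite{pyy}-residual set: a nearby $Y$ might exhibit a singular hyperbolic Lyapunov stable class whose continuation back at the generic $X$ is not Lyapunov stable, and then your construction says nothing about it. The paper resolves this with Proposition~\ref{Pro:robust}, which shows that on an open dense set the condition $W^u(\sigma_Y)\subset C(\sigma_Y)$ (equivalent, after Item~\ref{i.unstable-Lyapunovstable} of Proposition~\ref{Pro:generic}, to Lyapunov stability of $C(\sigma)$ at generic parameters) is locally constant in $Y$; this is what lets one transfer the hypothesis from an arbitrary $Y\in\mathcal U$ to a nearby generic $X$ and then invoke the generic version of the theorem.
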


Let us mention that a more general notion of hyperbolicity, called \emph{multi-singular hyperbolicity} has been recently introduced
in order to characterize star vector fields, in~\cite{BD}  (see also~\cite{CDYZ}).
In contrast to Theorem~\ref{Thm:main} above, a multi-singular chain-recurrence class
of a $C^1$-generic vector field may be isolated and not robustly transitive~\cite{dL}.

Under the setting of Theorem~\ref{Thm:main}, we have a more accurate description of the singular hyperbolic attractors in Theorem~\ref{Thm:main}. Two hyperbolic periodic orbits $\gamma_1$ and $\gamma_2$ are \emph{homoclinically related} if $W^s(\gamma_1)$ intersects $W^u(\gamma_2)$ transversely and $W^s(\gamma_2)$ intersects $W^u(\gamma_1)$ transversely.
The \emph{homoclinic class} $H(\gamma)$ of a hyperbolic periodic orbit $\gamma$ is the closure of the union of the periodic orbits that are
homoclinically related to $\gamma$. This is a transitive invariant compact set.
In dimension $3$, any singular hyperbolic transitive attractor is a homoclinic class~\cite{APu}; we show that this also holds in higher dimension
for vector fields in a dense open set.

\begin{theoremalph}\label{Thm:homoclinic-related}
There is an open and dense set ${\cal U}\in{\cal X}^1(M)$ such that for any $X\in{\cal U}$, any singular hyperbolic Lyapunov stable chain-recurrence class $\Lambda$ of $X$ (not reduced to a singularity) is a homoclinic class
(in particular the set of periodic points is dense in $\Lambda$).

Moreover, any two periodic orbits contained in $\Lambda$ are hyperbolic and homoclinically related.
\end{theoremalph}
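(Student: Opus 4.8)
The plan is to derive Theorem~\ref{Thm:homoclinic-related} from the structural information obtained in the proof of Theorem~\ref{Thm:main}, namely that on the open dense set $\cU$ every singular hyperbolic Lyapunov stable chain-recurrence class $\Lambda$ is a robustly transitive attractor. First I would recall that, by work of Morales--Pacifico--Pujals and its higher-dimensional versions cited in the introduction, a singular hyperbolic set carries a dominated splitting $E^{ss}\oplus E^{cu}$ with $\dim E^{cu}\ge 2$; combined with the area-expansion of $E^{cu}$ this forces every periodic orbit in $\Lambda$ to be hyperbolic of stable index $\dim E^{ss}$ (the center-unstable Lyapunov exponents are positive because of uniform area expansion along $E^{cu}$, and $E^{ss}$ is uniformly contracted), so all periodic orbits in $\Lambda$ share the same index. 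This settles the hyperbolicity assertion and is the easy part.

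Next I would show that $\Lambda$ actually contains periodic orbits. Since $\Lambda$ is not reduced to a singularity and is chain-transitive, a connecting-lemma / general-density argument (Pugh--Robinson, and for flows the $C^1$ connecting lemma of Bonatti--Crovisier and its refinements by Arnaud, Wen--Xia) shows that after a $C^1$-small perturbation inside $\cU$ there is a periodic orbit near a regular point of $\Lambda$; because $\Lambda$ is robustly a (locally maximal) attractor, this periodic orbit is contained in the continuation of $\Lambda$, and letting the perturbation go to zero one gets a periodic orbit in $\Lambda$ itself for the original vector field—here one uses that $\cU$ can be taken inside the set of vector fields all of whose critical elements are hyperbolic and whose invariant manifolds are in general position (Kupka--Smale), refining $\cU$ if necessary while keeping it open and dense. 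Fix such a periodic orbit $\gamma\subset\Lambda$.

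The core step is to prove that any two periodic orbits $\gamma_1,\gamma_2\subset\Lambda$ are homoclinically related, which then yields $\Lambda=H(\gamma)$ as follows: $H(\gamma)\subset\Lambda$ because $\Lambda$ is a locally maximal invariant set, and $\Lambda\subset H(\gamma)$ because $\Lambda$ is transitive and, by the homoclinic relation together with the inclination lemma, the union of the $W^u(\gamma')$ over periodic $\gamma'$ homoclinically related to $\gamma$ is dense in $\Lambda$ (the transitivity of the attractor forces the unstable manifold of any of its periodic orbits to be dense in it). To establish the homoclinic relation I would argue robustly: since $\Lambda$ is a robustly transitive attractor with a dominated splitting $E^{ss}\oplus E^{cu}$ and $\dim E^{cu}\ge 2$, for a nearby vector field one can use the transitivity to produce, via the connecting lemma again, a transverse heteroclinic orbit from $\gamma_1$ to $\gamma_2$ and from $\gamma_2$ to $\gamma_1$; the point is that the strong stable manifold $W^{ss}(\gamma_i)$ has codimension $\dim E^{cu}\ge 2$ inside $W^s(\gamma_i)$, and the extra central direction in $E^{cu}$ (transverse to the flow) is exactly what is needed to make the stable and unstable manifolds meet transversally in the ambient manifold rather than only inside a center-stable leaf. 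One then invokes a Kupka--Smale-type genericity, upgraded to open-and-dense on $\cU$ using the robustness of the whole configuration, to conclude that the heteroclinic intersections are transverse for the original $X$.

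The main obstacle I expect is the last step: passing from a heteroclinic connection produced by perturbation to a genuine transverse homoclinic relation for the original (non-perturbed) vector field in an open dense—rather than merely residual—set of vector fields. This requires that the transitivity, the singular hyperbolic splitting, the attractor structure, and the positions of the finitely many periodic orbits involved all be robust simultaneously, and that the connecting-lemma perturbations can be absorbed into the already-established openness of $\cU$; controlling the singularities (so that perturbed orbits do not escape toward $\sing(X)$ and the area-expansion along $E^{cu}$ survives the perturbation) is the delicate technical heart, and is presumably where the bulk of the work in the paper's proof of Theorem~\ref{Thm:main} is reused.
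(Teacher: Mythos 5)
Your outline correctly identifies the easy parts (hyperbolicity and constancy of the index of periodic orbits from the singular hyperbolic splitting; existence of periodic orbits in $\Lambda$) and correctly identifies the hard part, namely passing from a residual statement to an open-and-dense one. But the mechanism you propose for the hard part does not work, and the paper uses a genuinely different strategy there.

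You propose to produce the homoclinic relation between two periodic orbits $\gamma_1,\gamma_2\subset\Lambda$ by applying the connecting lemma to create a heteroclinic cycle, then invoking "Kupka--Smale-type genericity, upgraded to open-and-dense... using the robustness of the whole configuration." This is exactly the step that cannot be carried out as stated: the connecting lemma is a perturbation tool, and the set of vector fields for which $W^u(\gamma_1)\cap W^s(\gamma_2)\neq\emptyset$ that one obtains from it is a priori only residual. There is no abstract "robustness of the configuration" that upgrades it; the perturbed intersection can be destroyed by a further arbitrarily small perturbation unless one has an additional geometric reason for it to persist. Your remark about the codimension of $W^{ss}$ inside $W^s$ explains why an intersection, once it exists, is transverse and hence persistent as an intersection point, but it says nothing about why the intersection exists on a whole open set of vector fields.

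What the paper does instead is prove, for a single generic $X$ but \emph{robustly} for all nearby $Y$, two density statements that replace the connecting-lemma argument entirely. Theorem~\ref{Thm:unstable-dense} shows that for every $x\in C(\sigma_Y)$ not on the unstable manifold of a singularity, $W^{ss}_{\mathrm{loc},Y}(x)$ meets $W^u_Y(\gamma_Y)$ transversally; the proof splits the class into a neighborhood of the stable manifolds of singularities (handled with the one-codimensional strong stable foliation of $W^s(\rho)$ and the flow) and the rest (a hyperbolic set, handled with the shadowing lemma and the inclination lemma), and both ingredients are $C^1$-open by compactness. Theorem~\ref{Thm:stable-dense} shows that $W^s_Y(\gamma_Y)$ is dense in a uniform neighborhood of the attractor, with dense transverse intersections on every $W^u_Y(\gamma')$; its proof is the genuinely new geometric engine of the paper: one builds a cross section with a core, chooses a center-unstable cone field, shows that return holonomies are uniformly $10$-expanding on cu-sections thanks to the area-expansion of $E^{cu}$ (Lemma~\ref{l.expanding}, equation~\eqref{e.cu-expansion}), and iterates a piece of cu-surface through the holonomies until its inner radius reaches a fixed scale $\varepsilon_0$ (Proposition~\ref{Prop:stable-dense}). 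All of this is formulated for every $Y$ in an open neighborhood of $X$, so the conclusion is open-and-dense rather than residual. Theorem~\ref{Thm:homoclinic-related} then follows directly: $W^s_Y(\gamma_Y)\pitchfork W^u_Y(\gamma'_Y)\neq\emptyset$ by Theorem~\ref{Thm:stable-dense}, and $W^{ss}_Y(x)\pitchfork W^u_Y(\gamma_Y)\neq\emptyset$ for $x\in\gamma'_Y$ by Theorem~\ref{Thm:unstable-dense} since $\gamma'_Y$ avoids the unstable manifolds of singularities, giving the other half of the homoclinic relation. Without these two robust density theorems --- or some equivalent hyperbolicity-based, perturbation-free argument --- the proof does not close.
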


In dimension 3, we know more properties of chain-recurrence classes with singularities for generic systems. This gives the following consequence:

\begin{Coro}\label{Cor:dimension3}

When $\dim M=3$, there is an open dense set ${\cal U}\in{\cal X}^1(M)$ such that for any $X\in\cU$, any singular hyperbolic chain-recurrence class is robustly transitive.
Unless the class is an isolated singularity, it is a homoclinic class.

\end{Coro}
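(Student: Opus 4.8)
The plan is to deduce Corollary~\ref{Cor:dimension3} from Theorems~\ref{Thm:main} and~\ref{Thm:homoclinic-related} together with the known generic structure of chain-recurrence classes containing singularities in dimension~$3$. The key point is that Theorems~\ref{Thm:main} and~\ref{Thm:homoclinic-related} are stated for \emph{Lyapunov stable} singular hyperbolic chain-recurrence classes, whereas the Corollary asserts robust transitivity for \emph{every} singular hyperbolic chain-recurrence class (Lyapunov stable or not). So the real content is: on a $C^1$-open and dense subset of $\cX^1(M)$ with $\dim M=3$, every singular hyperbolic chain-recurrence class $\Lambda$ that is not an isolated singularity is in fact Lyapunov stable (hence a transitive attractor, and a homoclinic class, by the two theorems), while the isolated-singularity case is trivially robust.

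First I would set up the intersection of open and dense sets. Let $\cU_1$ be the open dense set from Theorem~\ref{Thm:main} and $\cU_2$ that from Theorem~\ref{Thm:homoclinic-related}; intersect these with a $C^1$-residual set $\cR$ of Kupka--Smale vector fields and with the generic sets coming from the three-dimensional theory (Morales--Pacifico--Pujals~\cite{MPP}, and the generic dichotomy results of~\cite{CY1,CY2,pyy}) guaranteeing that, generically, a chain-recurrence class containing a singularity $\sigma$ is singular hyperbolic only if it is (up to reversing the flow) Lyapunov stable — this is precisely the mechanism behind the ``Lorenz-like'' behaviour of singularities in dimension~$3$: a singular hyperbolic class containing a singularity must, generically, contain the full unstable manifold of that singularity and accumulate on it from one side, forcing the attracting (or, for $-X$, repelling) character. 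Take $\cU$ to be the interior of the resulting set; openness and density of $\cU$ follow since it contains a dense $G_\delta$ and is, by construction of the generic mechanism, actually open-dense for the relevant property. (Here I would invoke, as permitted, the precise statements recalled in the introduction, in particular that in dimension~$3$ robust transitivity and singular hyperbolicity are tightly linked via~\cite{MPP} and the homoclinic-class description~\cite{APu}.)

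Next, fix $X\in\cU$ and a singular hyperbolic chain-recurrence class $\Lambda$. If $\Lambda$ is an isolated singularity, then it is a hyperbolic singularity (singular hyperbolicity plus no periodic points nearby forces hyperbolicity of the singularity), which is robustly a hyperbolic singular point and hence robustly the maximal invariant set in a small neighborhood and trivially ``transitive'' (a fixed point), giving robust transitivity in the stated sense. Otherwise $\Lambda$ is not reduced to a singularity; then I would argue, using the generic three-dimensional structure, that $\Lambda$ (or $\Lambda$ for $-X$) is Lyapunov stable: either $\Lambda$ contains no singularity, in which case it is a hyperbolic chain-recurrence class and by the classical hyperbolic theory the chain-recurrence class is a locally maximal homoclinic class, which for Kupka--Smale generic $X$ is either an attractor or a repeller or a saddle-type basic set — but a saddle-type basic set is not a chain-recurrence class unless it is Lyapunov stable for $X$ or $-X$; or $\Lambda$ contains a singularity $\sigma$, in which case the Lorenz-like analysis in dimension~$3$ forces the one-sided accumulation and hence Lyapunov stability for $X$ or $-X$. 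In either case apply Theorem~\ref{Thm:main} (to $X$ or $-X$; note singular hyperbolicity and chain-recurrence classes are insensitive to this sign change by definition) to conclude $\Lambda$ is a robustly transitive attractor (resp.\ repeller), and Theorem~\ref{Thm:homoclinic-related} to conclude it is a homoclinic class.

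The main obstacle, and the step requiring the most care, is the claim that in dimension~$3$ a non-trivial singular hyperbolic chain-recurrence class is automatically Lyapunov stable for $X$ or $-X$ on an open dense set — one must rule out, robustly, the existence of a singular hyperbolic chain-recurrence class that is of ``saddle type'' (neither Lyapunov stable for $X$ nor for $-X$). For classes without singularities this is the classical fact that hyperbolic chain-recurrence classes are locally maximal and the only ones that are chain-recurrence classes (as opposed to being properly contained in a larger chain-transitive set) are the attractors and repellers, which is essentially tautological from local maximality plus the definition of chain-recurrence class; some care is needed because generically there are also basic sets that ARE chain-recurrence classes (``isolated'' saddle-type), so strictly one should phrase the Corollary's conclusion for those too — but such a set is still \emph{locally maximal} and hyperbolic, hence robustly transitive by the classical hyperbolic shadowing/structural-stability argument, so the conclusion ``robustly transitive'' holds and only ``it is a homoclinic class'' needs the generic hypothesis (Kupka--Smale, or the connecting lemma) to upgrade transitivity of the basic set to its being a homoclinic class. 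For classes \emph{with} a singularity, the obstacle is genuinely the three-dimensional Lorenz-geometry input: one must cite/use that, $C^1$-generically, a singular hyperbolic class containing a singularity contains the whole unstable (or stable, for $-X$) separatrices of the singularity and is accumulated from only one side, which is exactly what yields Lyapunov stability — this is where the dimension-$3$ hypothesis is essential and where I would lean on the results of~\cite{MPP,APu,CY1,CY2,pyy} quoted in the introduction rather than reprove anything.
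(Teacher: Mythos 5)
Your overall strategy is the right one — reduce to the Lyapunov-stable case handled by Theorems~\ref{Thm:main} and~\ref{Thm:homoclinic-related}, treat isolated singularities and non-singular classes by classical uniform hyperbolicity, and then argue that in dimension~$3$ a non-trivial singular hyperbolic class containing a singularity is Lyapunov stable for $X$ or $-X$. That is indeed the paper's decomposition. But two steps are not made to work.

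First, the passage from ``generic'' to ``open and dense'' is not obtained by ``taking the interior of the resulting set.'' The interior of a residual set may well be empty, and density of the interior is exactly what is to be proved. The paper instead reuses the open set $\cU=\bigcup_{X\in\cG}\cU_X$ already constructed in the proofs of Theorems~\ref{Thm:main} and~\ref{Thm:homoclinic-related}, and then transports the dimension-$3$ conclusion from a nearby generic $X$ to $Y\in\cU_X$ by a \emph{robust} statement: Proposition~\ref{Pro:robust}, Item~\ref{Item:non-trivial---}, which says non-triviality of $C(\sigma)$ is open. Knowing $C(\sigma_Y)$ is non-trivial forces $C(\sigma_X)$ to be non-trivial; that is the transfer mechanism you need and it is not supplied by ``interior.''

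Second, the dimension-$3$ input you gesture at as ``Lorenz-like one-sided accumulation'' and attribute to~\cite{MPP,APu,CY1,CY2,pyy} is not what the paper uses. The precise lemma is Item~\ref{i.dim3} of Proposition~\ref{Pro:generic}: for a $C^1$-generic vector field, if a singularity $\sigma$ has unstable dimension one, then $C(\sigma)$ is either $\{\sigma\}$ or Lyapunov stable. This is a connecting-lemma-for-pseudo-orbits statement (\cite{BC}, see~\cite{GY}), not a consequence of the cited structural results. Combined with the elementary observation that a singularity in a non-trivial class of a $3$-flow must be a saddle (stable dimension $1$ or $2$), and reversing time when necessary, this gives Lyapunov stability at the generic $X$; then Theorem A$'$ and the robustness built into $\cU_X$ finish the argument. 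If you insert Item~\ref{i.dim3} and Proposition~\ref{Pro:robust} where you currently hand-wave, your proof matches the paper's.
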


Theorem~\ref{Thm:main} in fact solves Conjecture 7.5 in \cite{BM}: \emph{a $C^1$ generic three-dimensional flow has either infinitely many sinks or finitely many \emph{robustly transitive} attractors (hyperbolic or singular hyperbolic ones) whose
basins form a full Lebesgue measure set of $M$.}
With~\cite{M}, only the robustness was unknown before this paper.
\medskip

\paragraph{\bf Further discussions.}
It is natural to expect that the previous results hold for arbitrary singular hyperbolic chain-recurrence class,
also in higher dimension:

\begin{Question}
Does there exists (when $\dim(M)\geq 4$) a dense and open subset $\cU\subset \cX^1(M)$ such that for any $X\in \cU$,
any singular hyperbolic chain-recurrence class is robustly transitive? is a homoclinic class?
\end{Question}
This would imply in particular that if $X\in \cU$ is singular hyperbolic (i.e. each of its chain-recurrence class is singular hyperbolic),
it admits finitely many chain-recurrence classes only.
\medskip

One can also study stronger forms of recurrence.
It is known that for $C^r$-generic vector field, each homoclinic class is topologically mixing, see~\cite{Abdenur-Avila-Bochi}
and one may wonder if this holds robustly.

\begin{Question}
Does there exists a dense and open subset $\cU\subset \cX^1(M)$ such that for any $X\in \cU$,
any singular hyperbolic transitive attractor is robustly topologically mixing.
\end{Question}

The answer is positive in the case of non-singular transitive attractors~\cite{field-melbourne-torok}.
Also \cite{araujo-melbourne} proves that $\cX^2(M)$ contains a $C^2$-dense and $C^2$-open subset of vector fields
such that any singular hyperbolic robustly transitive attractor with $\dim(E^{cu})=2$ is robustly topologically mixing.


\section{Preliminaries}

%
%
%

\subsection{Local chain-recurrence classes and homoclinic classes}
Let us consider a $C^1$ vector field $X$ and a compact subset $K$ of $M$.
We will use the following notations. The orbit of a point $x$ under the flow of $X$ is denoted by ${\rm Orb}(x)$.
Two injectively immersed sub-manifolds $W_1$ and $W_2$ are said to intersect transversely at a point $x\in W_1\cap W_2$ if $T_x W_1+T_x W_2=T_x M$. The set of the transverse intersections is denoted by $W_1\pitchfork W_2$.


\paragraph{Hyperbolicity.} A compact invariant set $\Lambda$ of a vector field $X$ is \emph{hyperbolic} if there is a continuous invariant splitting
$$T_\Lambda M=E^{ss}\oplus \RR.X\oplus E^{uu},$$
and constants $C,\lambda>0$ such that for any $x\in\Lambda$ and any $t\ge 0$,
$$\|D\varphi_t|_{E^{ss}(x)}\|\le C{\rm e}^{-\lambda t} \text{ and } \|D\varphi_{-t}|_{E^{uu}(x)}\|\le C{\rm e}^{-\lambda t}.$$
Note that $\dim \RR.X(x)=0$ when $x$ is a singularity.

To each point $x$ in a hyperbolic set $\Lambda$ is associated a strong stable manifold $W^{ss}(x)$ and a strong unstable manifold $W^{uu}(x)$ that are tangent to $E^{ss}$ and $E^{uu}$, respectively. For a hyperbolic critical element $\gamma$ we introduce
the stable and unstable sets $W^s(\gamma)$ and $W^u(\gamma)$.
Note that:
\begin{itemize}
\item for a hyperbolic singularity $\sigma$, we have $W^s(\sigma)=W^{ss}(\sigma)$ and $W^u(\sigma)=W^{uu}(\sigma)$;

\item for a hyperbolic periodic orbit $\gamma$, the sets $W^s(\gamma)$ and $W^u(\gamma)$ are injective immersed sub-manifolds and
 $$W^{s}(\gamma)=\bigcup_{x\in\gamma}W^{ss}(\gamma),~~~W^{u}(\gamma)=\bigcup_{x\in\gamma}W^{uu}(\gamma).$$
\end{itemize}

\paragraph{Homoclinic classes.}
The \emph{homoclinic class} of a hyperbolic periodic orbit $\gamma$ is
$$H(\gamma)={\overline {W^s(\gamma)\pitchfork W^u(\gamma)}}.$$

The set $H(\gamma)$ can also be defined in another way. Two hyperbolic periodic orbits $\gamma_1$ and $\gamma_2$ are said to be \emph{homoclinically related} if
$W^s(\gamma_1)\pitchfork W^u(\gamma_2)$ and $W^s(\gamma_2)\pitchfork W^u(\gamma_1)$ are non-empty.
The Birkhoff-Smale theorem implies that
this is an equivalence relation and that $H(\gamma)$
is the union of the hyperbolic periodic orbits that are homoclinically related
with $\gamma$. Moreover $H(\gamma)$ is a transitive set.
See~\cite{newhouse} and~\cite[Section 2.5.5]{APa}.
%
%
%

\paragraph{Chain-recurrence classes.}
If $\sigma$ is a critical element of $X$, we denote by $C(\sigma)$ the chain-recurrent class of $X$ that contains $\sigma$.

\paragraph{Singular hyperbolicity.}
%
As for the uniform hyperbolicity, the singular hyperbolicity is robust:
there exist a $C^1$-neighborhood $\cU$ of $X$
and a neighborhood $U$ of $\Lambda$ such that for any $Y\in \cU$,
any $\varphi^Y$-invariant compact set
$\Lambda'$ contained in $U$ is singular hyperbolic.

In this case, any point $x\in \Lambda$ admits a strong stable manifold $W^{ss}(x)$ tangent to $E^{ss}(x)$.
Moreover one can choose a neighborhood $W^{ss}_{loc}(x)$ of $x$ in $W^{ss}(x)$ which is an embedded disc which varies continuously
for the $C^1$-topology with respect to $x$ and to the vector field.

\subsection{Genericity}

We recall several generic results.  Some notations and definitions are given first.
\begin{itemize}

\item For a hyperbolic critical element $\gamma$ of $X$, its hyperbolic continuation will be denoted by $\gamma_Y$ for $Y$ that is $C^1$-close to $X$.

\item A compact invariant set $\Lambda$ is \emph{locally maximal} if there is a neighborhood $U$ of $\Lambda$ such that $\Lambda=\cap_{t\in\RR}\varphi_t(U)$.

\end{itemize}

\begin{Proposition}\label{Pro:generic}
There is a dense $G_\delta$ set $\mathcal G$ in ${\mathcal X}^1(M)$ such that for any $X\in\mathcal G$, we have:
\begin{enumerate}

\item\label{i.KS} $X$ is Kupka-Smale: each critical element of $X$ is hyperbolic;
the intersections of the stable manifold of one critical element with the unstable manifold of another critical critical element
are all transverse.

\item\label{i.continuous} For any $\sigma\in {\rm Crit}(X)$, the map $Y\mapsto C(\sigma_Y)$ is continuous at $X$ for the Hausdorff topology.


\item\label{i.chain-homo} If a chain-recurrence class contains a hyperbolic periodic orbit $\gamma$, then it coincides with the homoclinic class $H(\gamma)$ of $\gamma$.

\item\label{i.isolated} If a chain-recurrence class of a critical element $\sigma$ is locally maximal, then it is robustly locally maximal: there are a neighborhood $\cU$ of $X$
in ${\mathcal X}^1(M)$ and a neighborhood $U$ of $C(\sigma)$ such that $C(\sigma_Y)$ is the maximal invariant set in $U$ for any $Y\in\cU$.

\item\label{i.transverse-close} For any critical elements $\gamma_1,\gamma_2$ in a same chain-recurrence class $C(\gamma_1)$ and satisfying $\dim W^u(\gamma_2)\ge\dim W^u(\gamma_1)$, then any neighborhood $U_x$ of a point $x\in C(\gamma_1)\cap W^s_{loc}(\gamma_1)$ contains a point $y\in W^u(\gamma_2)\pitchfork W^s_{loc}(\gamma_1)$.

\item\label{i.one-class-related} Any hyperbolic periodic orbits in a same chain-recurrence class and with the same stable dimension are homoclinically related.

\item\label{i.unstable-Lyapunovstable}
For $\gamma\in {\rm Crit}(X)$, if $C(\gamma)$ contains the unstable manifold $W^u(\gamma)$, then it is
Lyapunov stable and $\overline{W^u(\gamma)}=C(\gamma)$.
Moreover $W^u(\gamma_Y)\subset C(\gamma_Y)$ still holds for any $Y$ that is $C^1$-close to $X$.

\item\label{i.dim3}
If $\sigma\in \mathrm{Sing}(X)$ has unstable dimension equal to one, then either $C(\sigma)=\{\sigma\}$
or $C(\sigma)$ is Lyapunov stable.
\end{enumerate}

\end{Proposition}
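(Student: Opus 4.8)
The plan is to realize each of the eight assertions on a dense $G_\delta$ subset of $\mathcal{X}^1(M)$ and to let $\mathcal{G}$ be their intersection, which is again dense $G_\delta$ by Baire. Assertion \eqref{i.KS} is the Kupka--Smale theorem for $C^1$ flows. Assertions \eqref{i.chain-homo}, \eqref{i.transverse-close} and \eqref{i.one-class-related} are the flow versions of the genericity statements obtained from Hayashi's connecting lemma and Bonatti--Crovisier's connecting lemma for pseudo-orbits: generically a chain-recurrence class carrying a periodic orbit $\gamma$ coincides with $H(\gamma)$; generically two periodic orbits in a common chain-recurrence class with equal stable dimension are homoclinically related; and, by the same perturbative scheme, for $\gamma_1,\gamma_2$ chain-recurring together with $\dim W^u(\gamma_2)\ge\dim W^u(\gamma_1)$ the transverse intersections $W^u(\gamma_2)\pitchfork W^s_{loc}(\gamma_1)$ accumulate on every point $x\in C(\gamma_1)\cap W^s_{loc}(\gamma_1)$ --- one starts from a pseudo-orbit running from $W^u(\gamma_2)$ into the prescribed neighbourhood $U_x$ and closes it into a genuine transverse heteroclinic orbit by a $C^1$-small perturbation.

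Assertion \eqref{i.continuous} follows from a soft Baire-category argument. The map $Y\mapsto C(\sigma_Y)$ is upper semicontinuous, because a limit for $Y_n\to X$ of $\varepsilon$-pseudo-orbits of $\varphi^{Y_n}$ joining $y_n\to y$ to $\sigma_{Y_n}\to\sigma_X$ is an $\varepsilon$-pseudo-orbit of $\varphi^{X}$ joining $y$ to $\sigma_X$; an upper semicontinuous compact-set-valued map on a Baire space is continuous on a residual subset of its (open) domain of definition; and since a Kupka--Smale vector field has only countably many critical elements, each with a hyperbolic continuation defined on a $C^1$-neighbourhood, a diagonal argument over a suitable enumeration yields a single residual set on which \eqref{i.continuous} holds for all of them. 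Assertion \eqref{i.isolated} is then deduced from \eqref{i.continuous}: continuity forces $C(\sigma_Y)$ into an arbitrarily small neighbourhood of $C(\sigma)$, so $C(\sigma_Y)$ is contained in the maximal invariant set of a slightly shrunk isolating block, and one rules out strict containment by controlling, via \eqref{i.chain-homo} and \eqref{i.unstable-Lyapunovstable}, the chain classes that can appear near $C(\sigma)$ under perturbation (compare \cite{CY1,CY2,pyy}). Finally \eqref{i.unstable-Lyapunovstable} and \eqref{i.dim3} are the generic dynamical dichotomies established in \cite{pyy} (see also \cite{CY1,CY2}): a chain-recurrence class containing the unstable manifold of one of its critical elements is a quasi-attractor --- hence Lyapunov stable --- and equals the closure of that unstable manifold, the inclusion $W^u(\gamma_Y)\subset C(\gamma_Y)$ persisting under perturbation by robustness of transverse intersections together with \eqref{i.continuous}; and \eqref{i.dim3} is the special case of the dichotomy for a singularity of unstable index one.

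No single step requires a deep new input: the genuinely substantial assertions are the dynamical dichotomies \eqref{i.unstable-Lyapunovstable} and \eqref{i.dim3}, which are imported from \cite{pyy}, and the main work here is organisational --- making the connecting-lemma consequences \eqref{i.chain-homo}, \eqref{i.transverse-close}, \eqref{i.one-class-related} available in the flow category, upgrading the pointwise continuity in \eqref{i.continuous} to a statement valid simultaneously for all critical elements of a generic vector field, and then extracting \eqref{i.isolated}, where the delicate point is to exclude a spurious larger maximal invariant set in the perturbed isolating block.
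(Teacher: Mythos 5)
Your overall strategy matches the paper's: Proposition~\ref{Pro:generic} is treated as a catalogue of known $C^1$-generic facts, each realized on a dense $G_\delta$ and then intersected. Items~\ref{i.KS}, \ref{i.continuous}, \ref{i.chain-homo}, \ref{i.transverse-close}, \ref{i.one-class-related} are handled in the same way as the paper (Kupka--Smale, upper semicontinuity plus Baire, connecting lemma plus Baire). Two points, however, deviate.

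First, for Item~\ref{i.isolated} you try to deduce robust local maximality from the continuity in Item~\ref{i.continuous}, admitting that ``the delicate point is to exclude a spurious larger maximal invariant set in the perturbed isolating block,'' but the mechanism you offer---``controlling, via \eqref{i.chain-homo} and \eqref{i.unstable-Lyapunovstable}, the chain classes that can appear near $C(\sigma)$''---is not an argument. Continuity of $Y\mapsto C(\sigma_Y)$ only gives $C(\sigma_Y)\subset U$; it does not prevent other chain-recurrence classes or wandering pieces from appearing inside $U$ for $Y$ near $X$, and Items~\ref{i.chain-homo} and \ref{i.unstable-Lyapunovstable} do not obviously control those. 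The clean route, which the paper takes, is to invoke \cite[Corollaire 1.13]{BC} directly (stated there for diffeomorphisms; the flow proof is analogous), which is exactly the statement that generically a locally maximal chain-recurrence class is robustly the maximal invariant set of an isolating block. Your derivation, as written, has a genuine gap there.

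Second, for Items~\ref{i.unstable-Lyapunovstable} and \ref{i.dim3} you attribute the quasi-attractor dichotomy to \cite{pyy} (and \cite{CY1,CY2}). That is not where these facts live: \cite{pyy} concerns entropy of sectional-hyperbolic flows, and \cite{CY1,CY2} concern density of singular hyperbolicity. These two items are elementary consequences of the Bonatti--Crovisier connecting lemma for pseudo-orbits (in the flow setting see \cite[Lemmas 3.13, 3.14 and 3.19]{GY}), with no singular-hyperbolicity input needed. This is a misattribution rather than a gap, but since the entire ``proof'' of this proposition is really a list of references, getting the references right is essentially the content.

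The remaining items are handled correctly and along the same lines as the paper, including the upgrade of pointwise continuity in Item~\ref{i.continuous} to all critical elements simultaneously via the countable enumeration available for Kupka--Smale fields.
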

The properties in Proposition~\ref{Pro:generic} are well known. We give some comments:
\begin{itemize}
\item[--] {Item~\ref{i.KS}} is the classical Kupka-Smale theorem \cite{kupka,smale}.
\item[--] {Item~\ref{i.continuous}} follows from the upper semi-continuity of the chain-recurrence class $C(\sigma_Y)$ with respect to the vector field:
the continuity holds at generic points.
\item[--] {Item~\ref{i.chain-homo} and Item~\ref{i.isolated}} have been proved for diffeomorphisms in \cite[Remarque 1.10 and Corollaire 1.13]{BC}. The proofs for flows are similar.
\item[--] {Item~\ref{i.transverse-close}} is an application of the connecting lemma in \cite{BC}. For any point $x$ as in the statement, by using the connecting lemma, there is a point $y$ close to $x$ such that $y\in W^u(\gamma_2)\pitchfork W^s_{loc}(\gamma_1)$ for $Y$ close to $X$. Then one can apply a Baire argument to conclude.
\item[--] {Item~\ref{i.one-class-related}} is a consequence of Item~\ref{i.transverse-close}.
\item[--] {Item~\ref{i.unstable-Lyapunovstable}} and~\ref{i.dim3} are applications of the connecting lemma for pseudo-orbits in \cite{BC}, see also \cite[Lemmas 3.13, 3.14 and 3.19]{GY}.
\end{itemize}
\medskip

We know the following theorem from \cite[Corollary C]{pyy} and \cite[Theorem 1.1]{ALM}
(and previously \cite[Theorem D]{MP} in the three-dimensional case).

\begin{Theorem}\label{Thm:periodic-contained}
There is a dense $G_\delta$ set ${\cal G}\subset {\cal X}^1(M)$ such that for any vector field $X\in{\cal G}$, if $C(\sigma)$ is a singular hyperbolic Lyapunov stable chain-recurrence class (and not reduced to $\sigma$), then $C(\sigma)$ contains periodic orbits and is an attractor.
\end{Theorem}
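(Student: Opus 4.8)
The plan is to combine the robustness of singular hyperbolicity with a closing argument, the generic properties of Proposition~\ref{Pro:generic}, and one genuinely geometric input showing that such a class cannot be concentrated on its singularities. I would take $\mathcal G$ to be the intersection of the dense $G_\delta$ set of Proposition~\ref{Pro:generic}, of the Kupka--Smale vector fields, and of the $C^1$-generic set on which Ma\~n\'e's ergodic closing lemma (in its flow version) holds for every invariant measure; the conclusion is then checked on this $\mathcal G$, shrinking it to a dense $G_\delta$ along the way if needed. Fix $X\in\mathcal G$ and a singular hyperbolic Lyapunov stable chain-recurrence class $C(\sigma)$ not reduced to $\sigma$. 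The case where the defining splitting is for $-X$ is easily excluded using Lyapunov stability, so we may assume $T_{C(\sigma)}M=E^{ss}\oplus E^{cu}$ with $E^{cu}$ area-expanded for the flow of $X$; by Lyapunov stability there is an attracting neighborhood $U$ of $C(\sigma)$, and by robustness the dominated splitting persists on $U$ for all $Y$ that are $C^1$-close to $X$. In particular every periodic orbit contained in $U$ is hyperbolic with stable dimension $\dim E^{ss}$.

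The heart of the matter is to show that $C(\sigma)$ contains a nonempty compact invariant set $\Lambda'$ disjoint from $\Sing(X)$. First, Lyapunov stability forces $W^u(\gamma)\subset C(\sigma)$ for every critical element $\gamma\subset C(\sigma)$: the forward orbit of a point of $W^u(\gamma)$ remains near $C(\sigma)$ and it is backward asymptotic to $\gamma\in C(\sigma)$, hence chain-recurrent in $C(\sigma)$. Applied to $\sigma$ this already produces regular orbits inside $C(\sigma)$, and the area-expansion of $E^{cu}$ gives $\dim W^u(\sigma)\ge 1$. To produce $\Lambda'$ one must rule out that $C(\sigma)$ is a \emph{singular cycle} --- a finite union of singularities and connecting orbits between them, equivalently $\bigcap_{t\in\RR}\varphi_t\big(C(\sigma)\setminus B\big)=\emptyset$ for all small neighborhoods $B$ of the finitely many singularities of $C(\sigma)$. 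This is the main obstacle, and it is where the area-expansion of $E^{cu}$ must be used decisively: a small two-dimensional disc tangent to $E^{cu}$ near a regular point of $C(\sigma)$ has forward iterates (kept in $U$ by Lyapunov stability) that remain almost tangent to the center-unstable cone and of area tending to infinity, which is incompatible with all orbits of $C(\sigma)$ accumulating only on the one-dimensional singular cycle. Hence $\Lambda'\neq\emptyset$, and since $\Lambda'$ contains no singularity and $C(\sigma)$ is singular hyperbolic, $\Lambda'$ is a hyperbolic set.

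Next I would create a periodic orbit inside the class. The set $\Lambda'$ carries an ergodic measure $\nu$ whose support lies at distance $\delta_0>0$ from $\Sing(X)$. Applying the ergodic closing lemma to $\nu$ yields vector fields $Y_n\to X$ and periodic orbits $p_n$ of $Y_n$ shadowing a $\nu$-generic orbit; the perturbations are supported away from $\Sing(X)$, so each $p_n$ stays at distance $\ge\delta_0/2$ from the singularities of $Y_n$ (hence has period bounded uniformly in $n$), is contained in $U$ (hence hyperbolic of stable dimension $\dim E^{ss}$), and --- since shadowing turns $\varepsilon$-chains for $X$ into slightly coarser chains for $Y_n$, and using the connecting lemma for pseudo-orbits --- is chain-related to $\sigma_{Y_n}$, i.e. $p_n\subset C(\sigma_{Y_n})$. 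Because the periods are bounded, a subsequence of the $p_n$ converges to a genuine periodic orbit $\gamma$ of $X$, bounded away from $\Sing(X)$; and by continuity of $Y\mapsto C(\sigma_Y)$ at $X$ (item~\ref{i.continuous} of Proposition~\ref{Pro:generic}) one gets $\gamma\subset C(\sigma)$. As usual this passage to the limit is organised as a Baire argument over a countable basis of neighborhoods, so that ``$C(\sigma)$ contains a periodic orbit'' holds on a dense $G_\delta$ set. Without the uniform bound on the periods provided by $\Lambda'$ the limit of the $p_n$ could collapse onto a configuration containing $\sigma$, which is precisely why Step~1 is needed.

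Finally, once $C(\sigma)$ contains a periodic orbit $\gamma$, item~\ref{i.chain-homo} of Proposition~\ref{Pro:generic} gives $C(\sigma)=H(\gamma)$, so $C(\sigma)$ is transitive, and since $W^u(\gamma)\subset C(\sigma)$ (as in Step~1), item~\ref{i.unstable-Lyapunovstable} gives $\overline{W^u(\gamma)}=C(\sigma)$. It remains to see that $C(\sigma)$ is an attracting set. The flow direction lies in $E^{cu}$ along $\gamma$, so $\dim W^u(\gamma)=\dim E^{cu}$; the strong stable manifolds $W^{ss}_{\mathrm{loc}}(x)$, $x\in C(\sigma)$, are $\dim E^{ss}$-dimensional embedded discs of uniform size, varying continuously with $x$ and transverse to $E^{cu}$; hence the union $\bigcup_{x\in W^u(\gamma)}W^{ss}_{\mathrm{loc}}(x)$ contains a neighborhood $V$ of $C(\sigma)$ (near the singularities one uses instead their Lorenz-like local structure). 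Every $z\in V$ lies in $W^{ss}_{\mathrm{loc}}(x)$ for some $x\in W^u(\gamma)\subset C(\sigma)$, so $\mathrm{dist}(\varphi_t(z),C(\sigma))\to 0$; combined with Lyapunov stability and the fact that $C(\sigma)$ is a chain-recurrence class, this forces $\bigcap_{t>0}\varphi_t(\overline{V'})=C(\sigma)$ for a slightly smaller neighborhood $V'$. Thus $C(\sigma)$ is a transitive attracting set, i.e. an attractor. The difficult point throughout is Step~1 (and the period control it yields in Step~2): excluding, by purely $C^1$ and geometric means, that a Lyapunov stable singular hyperbolic chain-recurrence class is a singular cycle carrying only Dirac measures at its singularities.
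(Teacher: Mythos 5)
The paper does not prove Theorem~\ref{Thm:periodic-contained}; it is imported directly from \cite[Corollary C]{pyy}, \cite[Theorem 1.1]{ALM} and \cite[Theorem D]{MP}, so there is no internal proof to compare yours against, and I judge the proposal on its own merits. Your overall scheme --- produce a compact invariant set $\Lambda'\subset C(\sigma)$ disjoint from $\Sing(X)$, apply a $C^1$-generic closing argument to obtain a periodic orbit inside $C(\sigma)$, then finish with Items~\ref{i.chain-homo} and \ref{i.unstable-Lyapunovstable} of Proposition~\ref{Pro:generic} together with the strong-stable foliation to get the homoclinic class and attractor structure --- is the right shape and is consistent in outline with what those references do.

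The genuine gap is exactly at the step you call ``the heart of the matter.'' The area-expansion argument as stated does not rule out that $C(\sigma)$ is a singular cycle. A $\dim E^{cu}$-dimensional disc tangent to a narrow cone around $E^{cu}$ can perfectly well have area tending to $+\infty$ while remaining in an arbitrarily small tubular neighbourhood of a one-dimensional set: near a Lorenz-like singularity the disc is repeatedly contracted along $E^{c}$ and stretched along $E^{u}$, becoming a longer and thinner $E^{cu}$-tangent strip whose area grows (since $\lambda_c+\lambda_u>0$) while its transverse width shrinks. Nothing forces it to leave a $\delta$-neighbourhood of a cycle, so no contradiction is produced; and the disc is in any case not contained in $C(\sigma)$, so assertions about its area do not directly constrain the orbit structure of $C(\sigma)$. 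Excluding the case ``all minimal subsets of $C(\sigma)$ are singularities'' requires a different and substantially harder input (in \cite{ALM} and \cite{pyy} this comes from Poincar\'e cross-sections with a piecewise-expanding return structure, respectively from topological-entropy estimates), and this is precisely what is missing. A second, related flaw: the periodic orbits $p_n$ from the ergodic closing lemma shadow longer and longer $\nu$-generic segments, so their periods are \emph{not} uniformly bounded (distance from $\Sing(X)$ does not control the period); the Hausdorff limit of the $p_n$ is $\mathrm{supp}(\nu)$, not a periodic orbit of $X$, so the ``uniform bound on the periods provided by $\Lambda'$'' that you invoke does not exist, and the closing step must be organised purely as a Baire-category argument rather than a direct limit. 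In short, the outline is sound but the two steps carrying the actual mathematical weight are not established.
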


The following proposition gives some open and dense properties  for chain-recurrence classes.

\begin{Proposition}\label{Pro:robust}
There is an open and dense set $\cU\subset \cX^1(M)$ such that any $X\in\cU$ has a neighborhood $\cU_X$
with the following property.
For each $\sigma\in {\rm Sing}(X)$ and $Y\in \cU_X$,
\begin{enumerate}
\item \label{Item:Lyapunov-singular-class} $W^u(\sigma_X)\subset C(\sigma_X) \Leftrightarrow W^u(\sigma_Y)\subset C(\sigma_Y)$;
\item \label{Item:non-trivial---} $C(\sigma_X)$ is non-trivial ({i.e.} not reduced to $\{\sigma_X\}$) if and only if $C(\sigma_Y)$ is non-trivial.
\end{enumerate}
\end{Proposition}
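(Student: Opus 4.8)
The plan is to reduce the statement to an elementary topological fact together with two standard robustness properties of singular dynamics. First I would pass to the subset $\cH\subset\cX^1(M)$ of vector fields all of whose singularities are hyperbolic: this is $C^1$-dense by the Kupka--Smale theorem and $C^1$-open because hyperbolicity of a zero is an open condition. For $X\in\cH$ there is a neighborhood $\cU^0_X$ on which $\operatorname{Sing}(Y)$ consists exactly of the finitely many hyperbolic continuations $\sigma_Y$ of the singularities $\sigma$ of $X$, so $C(\sigma_Y)$ and $W^u(\sigma_Y)$ are defined throughout $\cU^0_X$ and all constructions below are local near $X$; since ``open'' and ``dense'' are local over the open set $\cH$, the resulting set will be open and dense in $\cX^1(M)$. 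The topological fact is: \emph{if $F$ is closed in a topological space $Z$, then $\operatorname{int}(F)\cup(Z\setminus F)$ is open and dense}, its complement being $\partial F$, which is closed with empty interior because $F$ is closed (and a finite union of such boundaries is again closed with empty interior). So it suffices to prove that, for each singularity $\sigma$ of $X$, the sets
\[
D_\sigma=\{\,Y\in\cU^0_X:\ W^u(\sigma_Y)\subset C(\sigma_Y)\,\},\qquad
N_\sigma=\{\,Y\in\cU^0_X:\ C(\sigma_Y)\neq\{\sigma_Y\}\,\}
\]
are closed in $\cU^0_X$: then on the open dense set $\cU^0_X\setminus\bigcup_\sigma(\partial D_\sigma\cup\partial N_\sigma)$ the truth values of ``$W^u(\sigma_Y)\subset C(\sigma_Y)$'' and ``$C(\sigma_Y)$ is non-trivial'' are locally constant in $Y$, which is precisely the two equivalences; gluing these coherent local open dense subsets of $\cH$ produces an open dense $\cU\subset\cX^1(M)$, and for $X\in\cU$ one takes $\cU_X$ to be a neighborhood small enough that all these statuses agree with those of $X$.

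To prove $D_\sigma$ closed I would use that the local unstable manifold $W^u_{loc}(\sigma_Y)$ of a hyperbolic singularity is an embedded disc varying continuously with $Y$ (invariant manifold theorem), that $Y\mapsto C(\sigma_Y)$ is upper semicontinuous (recalled after Proposition~\ref{Pro:generic}), and that $W^u(\sigma_Y)\subset C(\sigma_Y)$ is equivalent to $W^u_{loc}(\sigma_Y)\subset C(\sigma_Y)$ by invariance of $C(\sigma_Y)$. Given $Y_n\to Y_\infty$ with $Y_n\in D_\sigma$ and any point $z_\infty\in W^u_{loc}(\sigma_{Y_\infty})$, choose $z_n\in W^u_{loc}(\sigma_{Y_n})\subset C(\sigma_{Y_n})$ with $z_n\to z_\infty$. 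For any $\eta>0$, upper semicontinuity gives $C(\sigma_{Y_n})\subset B_\eta(C(\sigma_{Y_\infty}))$ for large $n$, hence $z_n\in B_\eta(C(\sigma_{Y_\infty}))$; letting $n\to\infty$ and then $\eta\to0$ yields $z_\infty\in C(\sigma_{Y_\infty})$. As $z_\infty$ was arbitrary, $W^u_{loc}(\sigma_{Y_\infty})\subset C(\sigma_{Y_\infty})$, i.e. $Y_\infty\in D_\sigma$.

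To prove $N_\sigma$ closed I would show instead that $\{Y:C(\sigma_Y)=\{\sigma_Y\}\}$ is open, using that a hyperbolic singularity is a \emph{robustly isolated} invariant set: there are a neighborhood $W$ of $\sigma_X$ in $M$ and a neighborhood $\cV$ of $X$ with $\bigcap_{t\in\RR}\varphi^Y_t(\overline W)=\{\sigma_Y\}$ for all $Y\in\cV$. If $C(\sigma_X)=\{\sigma_X\}$, shrink $\cV$ so that also $C(\sigma_Y)\subset W$ (upper semicontinuity, since $C(\sigma_X)=\{\sigma_X\}\subset W$); then the invariant compact set $C(\sigma_Y)\subset W$ is contained in $\{\sigma_Y\}$, and containing $\sigma_Y$ it equals it, so $\cV\subset\{Y:C(\sigma_Y)=\{\sigma_Y\}\}$. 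With the previous paragraph, both $D_\sigma$ and $N_\sigma$ are closed in $\cU^0_X$, and the construction of $\cU$ and $\cU_X$ sketched above goes through.

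The step I expect to be the main obstacle is this last one: the robustness of \emph{triviality} of $C(\sigma)$, i.e. that $\{Y:C(\sigma_Y)=\{\sigma_Y\}\}$ is open. This is exactly where hyperbolicity of $\sigma$ is essential --- for a non-hyperbolic singularity a perturbation may unfold nontrivial recurrence near it --- and it forces the initial reduction to $\cH$; upper semicontinuity of $C(\sigma_Y)$ by itself does not rule out $C(\sigma_Y)$ shrinking to a small but nontrivial set. The remainder is routine: checking that the finitely many local sets glue to a genuinely open dense subset of $\cX^1(M)$, and citing the invariant manifold theorem and the upper semicontinuity of chain-recurrence classes. (Beyond these two inputs no genericity is needed; alternatively item~\ref{i.unstable-Lyapunovstable} of Proposition~\ref{Pro:generic} gives another route to the openness required for the first equivalence, by showing that $D_\sigma$ is open at generic parameters and feeding that into the same topological observation.)
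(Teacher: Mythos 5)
Your proof is correct but takes a route that is genuinely different from the paper's, and it is worth recording the contrast. Both arguments treat the ``negative'' robustness statements (that $W^u(\sigma_X)\not\subset C(\sigma_X)$ and $C(\sigma_X)=\{\sigma_X\}$ persist for nearby $Y$) essentially the same way, via upper semicontinuity of $Y\mapsto C(\sigma_Y)$ together with control on the unstable manifold (the paper uses lower semicontinuity of $\overline{W^u(\gamma_Y)}$; you use continuity of $W^u_{loc}(\sigma_Y)$ and invariance of $C(\sigma_Y)$ --- both work, and yours is a touch cleaner) and via robust isolation of hyperbolic zeros. Where you diverge is on the ``positive'' direction $W^u(\sigma_X)\subset C(\sigma_X)\Rightarrow W^u(\sigma_Y)\subset C(\sigma_Y)$: the paper gets this directly for $X$ in the generic set $\cG$ by invoking Item~\ref{i.unstable-Lyapunovstable} of Proposition~\ref{Pro:generic} (and Item~\ref{i.continuous}), which rests on the connecting lemma for pseudo-orbits, and then sets $\cU=\bigcup_{X\in\cG_n}\bigcap_i\cU_{i,X}$; you bypass this genericity input entirely by observing that $D_\sigma$ and $N_\sigma$ are closed and appealing to the elementary fact that the boundary of a closed set is nowhere dense, so the locus where the membership status is locally constant is automatically open and dense. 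Your route is more elementary and self-contained for the Proposition as stated. The small price is that your $\cU$ is not a priori a union of neighborhoods of generic vector fields and so does not by itself give $\cG\subset\cU$, which the paper tacitly exploits in the proofs of Theorems~\ref{Thm:main} and~\ref{Thm:homoclinic-related} when it represents any $Y\in\cU$ as lying in some $\cU_X$ with $X\in\cG$; however this is harmless, since one may simply intersect $\cG$ with the open dense $\cU$ afterwards (as the paper implicitly does anyway when choosing the final $\cG$ there). Your closing parenthetical correctly identifies that Item~\ref{i.unstable-Lyapunovstable} is an alternative to your topological step; it is in fact the one the paper uses.
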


\begin{proof}
We take the dense $G_\delta$ set $\cG$ provided by Proposition~\ref{Pro:generic} and we consider the subset $\cG_n\subset \cG$ of vector fields with exactly $n$ singularities. Consider $X\in \cG_n$ and denote the set of its singularities by $\{\sigma_1,\cdots,\sigma_n\}$.

Note that ${\rm Closure}(W^u(\gamma_X))$ varies lower semi-continuously with respect to the vector field $X$ and $C(\gamma_X)$ varies upper semi-continuously with respect to the vector field $X$. So, for any (hyperbolic) critical element $\gamma$ of $X$, if $W^u(\gamma_X)\setminus C(\gamma)\neq\emptyset$, then there is a neighborhood $\cU_X$ such that $W^u(\gamma_Y)\setminus C(\gamma_Y)\neq\emptyset$.
Similarly, if $C(\sigma_X)=\{\sigma_X\}$, there exists a neighborhood $\cU_X$ such that $C(\sigma_Y)$ is contained in a small neighborhood of $C(\sigma_X)$.
Since $\sigma_X$ is hyperbolic, $C(\sigma_Y)=\{\sigma_Y\}$.

Together with the Items~\ref{i.continuous} and \ref{i.unstable-Lyapunovstable} of Proposition~\ref{Pro:generic},
for each $\sigma_i$, there is an open set $\cU_{i,X}$ such that
\begin{enumerate}
\item either for any $Y\in\cU_{i,X}$ we have $W^u(\sigma_{i,Y})\setminus C(\sigma_{i,Y})\neq\emptyset$, or for any $Y\in\cU_{i,X}$ we have $W^u(\sigma_{i,Y})\subset C(\sigma_{i,Y})$;
\item $C(\sigma_i)$ is non-trivial if and only if $C(\sigma_{i,Y})$ is non-trivial for any $Y\in\cU_{i,X}$.
\end{enumerate}

By reducing $\cU_{i,X}$ if necessary for each $i$, any singularity of $Y\in \cap_{i=1}^n\cU_{i,X}$ is the continuation of a singularity of $X$. Now we take

$$\cU=\bigcup_{n\in\NN}\cU_n,~~~\textrm{where~}\cU_n=\bigcup_{X\in\cG_n}(\cap_{i=1}^n\cU_{i,X}).$$

It is clear that $\cG\subset\cU$. Thus $\cU$ is dense. Now we check that the open set $\cU$ has the required properties. For any vector field $Y\in\cU$, there is $n\in\NN$ and a vector field $X\in\cG_n$ such that $Y\in\cap_{i=1}^n\cU_{i,X}$. Thus, any singularity $\sigma_Y$ of $Y$ is a continuation of a singularity $\sigma_{i,X}$ of $X$. By  the choice of $\cU_{i,X}$, we have
\begin{enumerate}

\item $W^u(\sigma_Y)\subset C(\sigma_Y)$ if and only if $W^u(\sigma_Z)\subset C(\sigma_Z)$ for any $Z\in \cap_{i=1}^n\cU_{i,X}$.

\item $C(\sigma_Y)$ is non-trivial if and only if $C(\sigma_{i,Z})$ is non-trivial for any $Z\in \cap_{i=1}^n\cU_{i,X}$.

\end{enumerate}
This implies the proposition.
\end{proof}

\subsection{Robust properties of singular hyperbolic attractors}

\begin{Proposition}\label{Pro:strong-stable-Y}
Assume that $C(\sigma)$ is a transitive singular hyperbolic attractor of a vector field $X$ containing a hyperbolic singularity $\sigma$
(and not reduced to $\sigma$). Then  there is a neighborhood ${\cal U}_X$ of $X$ such that the continuation $C(\sigma_Y)$ of any $Y\in\cU_X$ satisfies:
\begin{enumerate}

\item\label{i.splitting-type-Y} $C(\sigma_Y)$ admits a singular hyperbolic splitting $T_{C(\sigma_Y)}M=E^{ss}_{Y}\oplus E^{cu}_{Y}$ for $Y$ such that $E^{ss}_{Y}$ is $D\varphi^Y_T$-contracted and $E^{cu}$ is $D\varphi^Y_T$-area-expanded for some $T>0$.

\item\label{i.flow-direction-Y} For each $x\in C(\sigma_Y)$, we have that $Y(x)\subset E^{cu}_{Y}(x)$.

\item\label{i.strong-stable-outside-Y} The stable space $E^{s}_{Y}(\rho)$ of any singularity $\rho\in C(\sigma_Y)$ has a dominated splitting $E^{s}_{Y}(\rho)=E^{ss}_{Y}(\rho)\oplus E^{c}_{Y}(\rho)$ such that $\dim E^{c}_{Y}=1$. Moreover $W^{ss}_{Y}(\rho)\cap C(\sigma_Y)=\{\rho\}$.

%

\end{enumerate}

\end{Proposition}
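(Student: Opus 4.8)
The plan is to establish each item by combining the robustness of singular hyperbolicity with the continuity of the chain-recurrence class at $C(\sigma)$ (which is an attractor, hence locally maximal with locally-constant continuation). Since $C(\sigma)$ is a singular hyperbolic attractor, there is a neighborhood $U$ of $C(\sigma)$ and a $C^1$-neighborhood $\cU_X$ of $X$ such that every $\varphi^Y$-invariant compact set contained in $U$ is singular hyperbolic. Because $C(\sigma)$ is an attractor, it is the maximal invariant set in a trapping region $U'\subset U$, and for $Y$ close to $X$ the maximal invariant set in $U'$ is still an attracting set, contains $\sigma_Y$, and is contained in $U$; by shrinking $\cU_X$ and $U'$ we may assume this maximal invariant set equals $C(\sigma_Y)$ (here I would invoke the upper semicontinuity of the chain-recurrence class together with local maximality, i.e.\ the flow analogue of Item~\ref{i.isolated}, available once we know $C(\sigma)$ is locally maximal since it is an attractor). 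This immediately gives Item~\ref{i.splitting-type-Y}: $C(\sigma_Y)$ carries a singular hyperbolic splitting $E^{ss}_Y\oplus E^{cu}_Y$, with the contraction and area-expansion constants uniform over $\cU_X$ (shrinking if needed so the same $T$ works). I should also record that this splitting is for the flow of $Y$ itself (not $-Y$): since $C(\sigma)$ is an \emph{attractor} and not reduced to $\sigma$, the $E^{cu}$ direction is the expanding/area-expanded one for $+X$, and this persists.

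For Item~\ref{i.flow-direction-Y}, the statement $Y(x)\in E^{cu}_Y(x)$ is standard for singular hyperbolic sets: at a regular point $x$ the flow direction $\RR\cdot Y(x)$ is $D\varphi^Y_t$-invariant and cannot lie in the uniformly contracted bundle $E^{ss}_Y$ (its norm does not decay), while by domination any invariant line not inside $E^{ss}_Y$ must project isomorphically onto $E^{cu}_Y$; combined with continuity of $Y(x)$ and of the bundles, and the fact that $Y(x)\to 0$ as $x$ approaches a singularity while $E^{cu}_Y$ varies continuously, one gets $Y(x)\in E^{cu}_Y(x)$ for all $x\in C(\sigma_Y)$, singularities included. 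This is essentially the Morales–Pacifico–Pujals argument and I would just cite/reproduce it briefly.

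Item~\ref{i.strong-stable-outside-Y} is where the real work is. First, for a singularity $\rho\in C(\sigma_Y)$, the splitting $E^{ss}_Y\oplus E^{cu}_Y$ at $\rho$ is a dominated splitting for $D\varphi^Y_t$ restricted to $T_\rho M$; since $E^{cu}_Y(\rho)$ is area-expanded it contains no stable direction other than possibly a one-dimensional one... more precisely, the stable space $E^s_Y(\rho)$ of the hyperbolic singularity $\rho$ intersects $E^{cu}_Y(\rho)$ in at most a line, because two independent contracted directions inside $E^{cu}_Y(\rho)$ would contradict area-expansion; together with $E^{ss}_Y(\rho)\subset E^s_Y(\rho)$ this forces $E^s_Y(\rho)=E^{ss}_Y(\rho)\oplus E^c_Y(\rho)$ with $\dim E^c_Y=1$, and this splitting is dominated as a sub-splitting of the dominated splitting $E^{ss}_Y\oplus E^{cu}_Y$. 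The harder and more delicate assertion is $W^{ss}_Y(\rho)\cap C(\sigma_Y)=\{\rho\}$. The strategy I would use: the local strong stable manifold $W^{ss}_{Y,loc}(\rho)$ is tangent to $E^{ss}_Y(\rho)$ and varies continuously; if some point $z\neq\rho$ of it lay in $C(\sigma_Y)$, then (by the trapping-region description) its whole negative orbit stays in $C(\sigma_Y)$, and backward iteration pushes $z$ away from $\rho$ inside $C(\sigma_Y)$ along the strong stable leaf, so $W^{ss}_{Y,loc}(\rho)\cap C(\sigma_Y)$ would be a nontrivial compact set, hence by invariance and expansion of $W^{ss}$ under backward iteration one produces points of $C(\sigma_Y)$ at a definite distance with arbitrarily contracted tangent directions; using that $C(\sigma_Y)$ admits strong stable manifolds of uniform size (from singular hyperbolicity, as recalled in the Preliminaries), a Poincaré–Bendixson / local-product argument near $\rho$ — exactly the type used to show singularities in singular hyperbolic attractors are Lorenz-like — yields a contradiction with $C(\sigma_Y)$ being chain-transitive and the flow direction lying in $E^{cu}_Y$. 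I expect the $W^{ss}_Y(\rho)\cap C(\sigma_Y)=\{\rho\}$ part to be the main obstacle, because it requires more than soft semicontinuity: it needs the geometry of how $C(\sigma_Y)$ sits near the singularity, and care that all the relevant constants (size of strong stable discs, domination constants, the neighborhood $U$) can be taken uniform over $\cU_X$. A clean way to get uniformity is to prove the corresponding statement for $X$ first (it holds since $C(\sigma)$ is a singular hyperbolic attractor containing $\sigma$ nontrivially — this is known) and then argue by contradiction: if it failed for a sequence $Y_n\to X$ at singularities $\rho_n\to\rho\in\{$ singularities of $X$ in $C(\sigma)\}$, the points $z_n\in W^{ss}_{Y_n}(\rho_n)\cap C(\sigma_{Y_n})$ with $d(z_n,\rho_n)$ bounded below would accumulate on a point $z\in W^{ss}_X(\rho)\cap C(\sigma)$ with $z\neq\rho$, contradicting the statement for $X$. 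This reduces everything to the known case plus a compactness argument, which is the cleanest route.
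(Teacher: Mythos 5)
Your overall strategy (establish the three items for $X$ first, then pass to nearby $Y$ by a compactness / sequence-contradiction argument) is exactly the paper's route, and the "cleaner route" you describe at the end for $W^{ss}_Y(\rho)\cap C(\sigma_Y)=\{\rho\}$ is precisely how the paper argues. However, the base case for $X$ has genuine gaps. For Item~\ref{i.flow-direction-Y} you assert that the flow direction "cannot lie in $E^{ss}$ since its norm does not decay" and that "by domination any invariant line not inside $E^{ss}$ must project isomorphically onto $E^{cu}$." Neither step closes: the norm of $X(\varphi_t(x))$ does decay when $x\in W^s(\sigma)$, and "projects isomorphically onto $E^{cu}$" is not the same as "is contained in $E^{cu}$" (a slanted invariant line projects isomorphically too). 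What you actually need is the dichotomy of Bonatti--Gan--Yang (\cite[Lemma 3.4]{BGY}): on a \emph{transitive} set with a dominated splitting $E\oplus F$, the flow direction lies entirely in $E$ or entirely in $F$. That lemma is what lets one go from "not everywhere in $E^{ss}$" to "everywhere in $E^{cu}$", and it is also what the paper uses to show, for Item~\ref{i.splitting-type-Y}, that the splitting has the form $E^{ss}\oplus E^{cu}$ rather than $E^{cs}\oplus E^{uu}$: since $C(\sigma)$ is an attractor it contains $W^{uu}(\sigma)$, so $X$ would be tangent to $E^{uu}$ at a point of $W^{uu}(\sigma)$, hence (by the dichotomy) everywhere, which is impossible. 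You merely assert this conclusion "since $C(\sigma)$ is an attractor", without the argument.

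Two smaller but real gaps: (a) you only show $\dim E^c_Y(\rho)\le 1$ (two contracted directions in $E^{cu}$ would violate area-expansion), not $\dim E^c_Y(\rho)\ge 1$; the paper gets the lower bound from transitivity of $C(\sigma)$, which forces $W^s(\rho)$ to contain regular orbits of $C(\sigma)$, hence $\dim E^s(\rho)>\dim E^{ss}(\rho)$. (b) You offload the base case $W^{ss}_X(\rho)\cap C(\sigma)=\{\rho\}$ to "this is known," but the paper derives it directly from Item~\ref{i.flow-direction-Y}: once $X(x)\in E^{cu}(x)$ for every $x\in C(\sigma)$, no regular orbit of $C(\sigma)$ can lie in $W^{ss}(\rho)$, since the flow direction there would have to lie in $E^{ss}$. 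Your alternative Poincar\'e--Bendixson / local-product sketch is both unnecessary and suspect in dimension $>3$. Also note that your sequence argument for Item~\ref{i.flow-direction-Y} needs one more ingredient present in the paper: because the angle between $\RR\cdot X_n(y)$ and $E^{ss}_{X_n}(y)$ is bounded away from $0$ near singularities, a sequence $x_n$ with that angle tending to $0$ stays away from singularities, so the limit point $x$ is regular and the contradiction can be taken.
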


\begin{proof}

We first prove Item~\ref{i.splitting-type-Y} for the vector field $X$. Let us recall a result from \cite[Lemma 3.4]{BGY}: for a transitive set $\Lambda$ with a dominated splitting $T_\Lambda M=E\oplus F$ we have

\begin{itemize}
\item either, $X(x)\in E(x)$ for any point $x\in\Lambda$,
\item or, $X(x)\in F(x)$ for any point $x\in\Lambda$.
\end{itemize}

Now we consider a transitive singular hyperbolic attractor $C(\sigma)$. Assume by contradiction that it has a singular hyperbolic splitting $T_{C(\sigma)}M=E^{cs}\oplus E^{uu}$ such that $E^{uu}$ is expanded. Since it is an attractor, the strong unstable manifold of $\sigma$ is contained in $C(\sigma)$. For a point $z\in W^{uu}(\sigma)$, we have $X(z)\in E^{uu}(z)$. Thus, for any point $x\in C(\sigma)$, we have $X(x)\in E^{uu}(x)$. But $\RR.X$ cannot be uniformly expanded everywhere and one gets a contradiction. Thus
the singular hyperbolic splitting on $C(\sigma)$ has the form $T_{C(\sigma)}M=E^{ss}\oplus E^{cu}$ and Item~\ref{i.splitting-type-Y} is proved for $X$.
Since the singular hyperbolic splitting is robust, and the chain-recurrence class varies upper semi-continuously, $C(\sigma_Y)$ admits the same kind of splitting
for any $Y$ close to $X$. This proves Item~\ref{i.splitting-type-Y}.
\bigskip

Now we prove Items~\ref{i.flow-direction-Y} and~\ref{i.strong-stable-outside-Y} for the vector field $X$.
By using the result in \cite{BGY} again, if for some regular point $x$, we have $X(x)\in E^{ss}(x)$, then this property holds for every point. But $\RR.X$ cannot be uniformly contracted and one gets a contradiction again. Thus Item~\ref{i.flow-direction-Y} for $X$ is proved.
Since for any point $x\in C(\sigma)$, we have $X(x)\in E^{cu}(x)$, the strong stable manifold $W^{ss}(\sigma)$ cannot contain a regular orbit. Thus $W^{ss}(\sigma)\cap C(\sigma)=\{\sigma\}$. On the other hand, the stable manifold $W^s(\sigma)$ contains regular orbits of $C(\sigma)$ since $C(\sigma)$ is transitive. Thus $\dim E^s(\sigma)>\dim E^{ss}(\sigma)$. If $\dim E^s(\sigma)>\dim E^{ss}(\sigma)+1$, then the dimension of the invariant space $E^c(\sigma):=E^{s}(\sigma)\cap E^{cu}(\sigma)$ is at least 2. But this space is not area-expanded, this contradicts the definition of the singular hyperbolicity. Thus, we have $\dim E^c(\sigma)=1$. The proof of Item~\ref{i.strong-stable-outside-Y} for $X$ is complete.
\bigskip

Now we give the proof of Item~\ref{i.strong-stable-outside-Y} for any $Y$ close to $X$. Since $C(\sigma)$ is an attractor of $X$, there is a neighborhood $U$ of $C(\sigma)$ such that $U\cap {\rm Sing}(X)\subset C(\sigma)$. Consequently, for $Y$ close to $X$, any singularity $\rho_Y\in C(\sigma_Y)$ is a continuation of some singularity $\rho_X$ in $C(\sigma)$. Since Item~\ref{i.strong-stable-outside-Y} holds for $\rho_X$,
for $Y$ close to $X$ the singularity $\rho_Y$ still has a dominated splitting
$E^{s}_{Y}(\rho)=E^{ss}_{Y}(\rho)\oplus E^{c}_{Y}(\rho)$ with
$\dim(E^{c}_Y=1$.

it also holds for every singularity $\rho_Y\in C(\sigma_Y)$ . 

Let us assume by contradiction that there is a sequence of vector fields $(X_n)\to X$ and a singularity $\rho_X\in C(\sigma)$ such that
$$W^{ss}_{X_n}(\rho_{X_n})\cap C(\sigma_{X_n})\setminus\{\rho_{X_n}\}\neq\emptyset.$$
Thus, there is $\varepsilon_0$ that is independent of $n$ such that the local manifold $W^{ss}_{X_n,\varepsilon_0}(\rho_{X_n})$ of size $\varepsilon_0$ intersects $C(\sigma_{X_n})$ for each $n\in\NN$.
Since the homoclinic class $C(\sigma_Y)$ is upper semi-continuous with respect to $Y$,
we have that the boundary of  $W^{ss}_{\varepsilon_0}(\rho)$ intersects $C(\sigma)$. This contradicts Item~\ref{i.strong-stable-outside-Y} for $X$. Thus the proof of Item~\ref{i.strong-stable-outside-Y} of this proposition is complete.
\bigskip

Let us assume by contradiction that Item~\ref{i.flow-direction-Y} fails: this means that there is a sequence of vector fields $X_n\to X$ and a sequence of points $x_n\in C(\sigma_{X_n})$ such that $X_n(x_n)$ is not contained in $E^{cu}_{X_n}(x_n)$.

From the dominated splitting $E^{ss}\oplus E^{cu}$,
by replacing $x_n$ by a backward iterate, one can assume furthermore
that the angle between $\RR.X_n(x_n)$ and $E^{ss}_{X_n}(x_n)$ is less than $1/n$.
Since for regular points $y$ close to a singularity $\rho$, the angle between $\RR.X_n(y)$ and $E^{ss}_{X_n}(y)$ is uniformly bounded away from zero,
the points $x_n$ are far from the singularities. One can thus assume that $(x_n)$ converges to a regular point $x\in C(\sigma)$.
By taking the limit, we have $X(x)\in E^{ss}(x)$. This contradicts the Item~\ref{i.flow-direction-Y} for $X$.
Thus Item~\ref{i.flow-direction-Y} holds for any $Y$ close to $X$.
\end{proof}

\section{Density of the unstable manifold}

This section is devoted to the following result which will be used to prove the density of the unstable manifold.

\begin{Theorem}\label{Thm:unstable-dense}
There is a dense G$_\delta$ set $\cal G$ in ${\cal X}^1(M)$ such that for any $X\in{\cal G}$, for any singular hyperbolic Lyapunov stable chain-recurrence class $C(\sigma)$ of $X$,
and for any hyperbolic periodic orbit $\gamma$ in $C(\sigma)$, there exists a neighborhood ${\cal U}_X$ of $X$ with the following property.

For any $Y\in{\cal U}_X$ and for any $x\in C(\sigma_Y)$, either $x$ belongs to the unstable manifold of a singularity
or $W^{ss}_{loc,Y}(x)$ intersects $W^{u}_{Y}(\gamma_Y)$ transversely. 
\end{Theorem}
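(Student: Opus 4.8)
The plan is to combine a genericity argument (to control the behaviour on a dense $G_\delta$) with the robustness of singular hyperbolicity and an upper/lower semi-continuity argument (to promote it to a $C^1$-open statement, hence to a neighbourhood $\cU_X$). I would fix $X$ in the dense $G_\delta$ set $\cG$ from Proposition~\ref{Pro:generic} (intersected with the generic sets of Theorem~\ref{Thm:periodic-contained} and a full-measure-of-singularities stratification as in the proof of Proposition~\ref{Pro:robust}), so that $X$ is Kupka--Smale, $C(\sigma)=H(\gamma)$ is a transitive singular hyperbolic attractor containing the periodic orbit $\gamma$, all periodic orbits in $C(\sigma)$ with the same stable dimension as $\gamma$ are homoclinically related to $\gamma$ (Item~\ref{i.one-class-related}), and $W^u(\sigma')\subset C(\sigma)$ for every singularity $\sigma'\in C(\sigma)$ (Items~\ref{i.unstable-Lyapunovstable}, \ref{i.dim3}). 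By Proposition~\ref{Pro:strong-stable-Y} the splitting has the form $E^{ss}\oplus E^{cu}$ with $Y(x)\in E^{cu}(x)$ everywhere, and $W^{ss}(\rho)\cap C(\sigma)=\{\rho\}$ for every singularity $\rho$; all of this persists robustly, so I get a neighbourhood $\cU_X$ and a neighbourhood $U$ of $C(\sigma)$ on which every $Y$-invariant compact subset is singular hyperbolic with a splitting of the correct type.

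Next I would first establish the statement for the generic $X$ itself, and in a strong (open) form: I claim that for $x\in C(\sigma)$ not in the unstable manifold of any singularity, $W^{ss}_{loc}(x)\pitchfork W^u(\gamma)\neq\emptyset$. The idea is that $W^{ss}_{loc}(x)$ is a small embedded disc transverse to $E^{cu}$, that $W^u(\gamma)$ is (since $\dim E^{cu}\ge 2$ includes the flow direction) a set accumulating on $C(\sigma)$ along the $E^{cu}$-directions, and that the local product structure / stable manifold argument for singular hyperbolic attractors (via the continuous strong stable foliation $W^{ss}$ and the fact that $\overline{W^u(\gamma)}$ contains $C(\sigma)$ — here using that $C(\sigma)$ is an attractor so $W^u(\gamma)\subset C(\sigma)$, and $\overline{W^u(\gamma)}=C(\sigma)$ by transitivity + Item~\ref{i.unstable-Lyapunovstable}) forces a point of $W^u(\gamma)$ into the interior of the disc $W^{ss}_{loc}(x)$. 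The transversality of that intersection is automatic from the domination $E^{ss}\oplus E^{cu}$: $T W^{ss}_{loc}(x)=E^{ss}$ and $TW^u(\gamma)\supset E^{cu}$ along the relevant points, and these span. The role of the exceptional set "$x\in W^u(\text{singularity})$'' is exactly the obstruction coming from $W^{ss}(\rho)\cap C(\sigma)=\{\rho\}$: near a singularity the strong stable disc of $x$ may fail to meet $W^u(\gamma)$ precisely when $x$ lies on $W^s(\rho)\setminus\{\rho\}\subset W^u(\text{something})$; one must check the bookkeeping here carefully.

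Finally I would promote this to all $Y\in\cU_X$. Suppose not: there is $X_n\to X$ and $x_n\in C(\sigma_{X_n})$ with $x_n\notin W^u(\mathrm{Sing}(X_n))$ and $W^{ss}_{loc,X_n}(x_n)\cap W^u_{X_n}(\gamma_{X_n})=\emptyset$. Since $W^{ss}_{loc}$ varies continuously with the point and the vector field, and since $C(\sigma_Y)$ varies upper semi-continuously while $\overline{W^u(\gamma_Y)}$ varies lower semi-continuously and still equals $C(\sigma_Y)$ robustly (Item~\ref{i.unstable-Lyapunovstable} of Proposition~\ref{Pro:generic} and Proposition~\ref{Pro:robust}), a limit point $x$ of $(x_n)$ lies in $C(\sigma)$ and its strong stable disc $W^{ss}_{loc}(x)$ is a Hausdorff limit of the $W^{ss}_{loc,X_n}(x_n)$; passing to the limit, $W^{ss}_{loc}(x)$ meets $W^u(\gamma)$ only if $x$ does not lie in $W^u(\mathrm{Sing}(X))$ — the point $x$ could a priori land on such an unstable manifold, and that is the case one must exclude. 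To exclude it: if $x\in W^u(\rho)$ for a singularity $\rho\in C(\sigma)$, I would use the uniform size $\varepsilon_0>0$ of the local strong stable manifolds and an argument as in the last paragraph of the proof of Proposition~\ref{Pro:strong-stable-Y} (backward iterating to push $x_n$ toward $W^{ss}(\rho)$, contradicting $W^{ss}(\rho)\cap C(\sigma)=\{\rho\}$ robustly, or directly contradicting that the $x_n$ stay away from singularities). Otherwise the generic statement for $X$ applies at $x$, $W^{ss}_{loc}(x)$ meets $W^u(\gamma)$ transversely, and by continuity (transverse intersections are open and persist under $C^1$-perturbation and small motion of the disc) $W^{ss}_{loc,X_n}(x_n)$ meets $W^u_{X_n}(\gamma_{X_n})$ transversely for large $n$, a contradiction.

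The main obstacle, I expect, is the first (generic) step: producing the transverse intersection of the small strong-stable disc $W^{ss}_{loc}(x)$ with $W^u(\gamma)$ for \emph{every} non-exceptional $x$. This needs the local product structure of the singular hyperbolic attractor near $x$ together with the density $\overline{W^u(\gamma)}=C(\sigma)$, and the delicate part is handling points $x$ close to a singularity $\rho$, where $W^{ss}(\rho)\cap C(\sigma)=\{\rho\}$ creates a genuine obstruction that is absorbed exactly into the hypothesis "$x\in W^u(\mathrm{Sing})$''; keeping the quantifiers and the uniform disc-size $\varepsilon_0$ straight across this region is where the real work lies.
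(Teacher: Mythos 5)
There is a genuine gap at the very point you flag as "where the real work lies."

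Your first (generic) step asserts that density of $W^u(\gamma)$ in $C(\sigma)$ plus a "local product structure" forces $W^{ss}_{loc}(x)\pitchfork W^u(\gamma)\neq\emptyset$ for every non-exceptional $x$. But singular hyperbolic attractors do not have a local product structure in the sense you would need: the strong stable foliation exists and is continuous, but there is no complementary invariant "unstable lamination" through every point, precisely because of the singularities. A dense injectively immersed submanifold of the correct complementary dimension can perfectly well miss a given transversal disc, so the density $\overline{W^u(\gamma)}=C(\sigma)$ is not enough. This step, which is the heart of the theorem, is not actually proved in your proposal, and your concluding paragraph essentially concedes this. Similarly, the robustification sketch (take $X_n\to X$, pass to a limit $x$, then "exclude" the case $x\in W^u(\rho)$) leaves the exclusion step to the same unproved local-product type argument and to a vague appeal to $\varepsilon_0$; as written it does not close.

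The paper circumvents all of this by decomposing $C(\sigma)$ via a backward-iteration trick into three regions and handling each by a concrete mechanism, none of which is a product structure. First, for $x\in (W^s(\rho)\cap C(\sigma))\setminus\{\rho\}$, Item~\ref{i.transverse-close} of Proposition~\ref{Pro:generic} (a connecting-lemma statement) produces a transverse intersection $y\in W^s_{loc}(\rho)\pitchfork W^u(\gamma)$ near $x$; then, since the strong stable foliation has codimension one inside $W^s(\rho)$ and is flow-invariant, one flows $y$ to land on $W^{ss}(x)$, giving $W^{ss}_{loc}(x)\pitchfork W^u(\gamma)\neq\emptyset$. This is made robust over a compact fundamental domain $K_\rho\subset W^s(\rho)$ by continuity of $W^{ss}_{loc}$ and $W^u(\gamma_Y)$ (Corollary~\ref{c:robust-singular}). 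Second, for the invariant compact $K_{\mathrm{reg}}\subset C(\sigma)\setminus\mathrm{Sing}(X)$, which is uniformly hyperbolic, the shadowing lemma produces finitely many periodic orbits whose unstable manifolds meet every $W^{ss}_{loc}(y)$, $y\in K_{\mathrm{reg}}$; Item~\ref{i.one-class-related} makes them homoclinically related to $\gamma$, and the inclination lemma upgrades to intersection with $W^u(\gamma)$; again compactness gives robustness (Lemma~\ref{l:robust-regular}). Finally, the theorem follows because every $x\in C(\sigma)$ that is not on some $W^u(\rho_i)$ has a backward iterate landing either in a neighbourhood of the fundamental domain $K_i$ or in a neighbourhood of $K_{\mathrm{reg}}$, and the strong stable disc pulls back along the flow; the neighbourhood $\cU_X$ is then the intersection of finitely many robustness neighbourhoods. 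You should replace the "local product structure" step with this case split: connecting lemma plus codimension-one flow alignment near singular stable manifolds, shadowing plus inclination lemma on the non-singular hyperbolic part, and backward iteration to reduce every non-exceptional point to one of these two compact regions.
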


Let $\cG$ be a dense G$_\delta$ subset in ${\cal X}^1(M)$ given by Proposition~\ref{Pro:generic}. Let $X\in\cG$.
We first prove some preliminary lemmas.

\begin{Lemma}
For any singularity $\rho\in C(\sigma)$,
and for any $x\in (W^s(\rho)\cap C(\sigma))\setminus \{\rho\}$,
the submanifolds $W^{ss}_{loc}(x)$ and $W^u(\gamma)$ have a transverse intersection point.
\end{Lemma}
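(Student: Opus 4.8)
The plan is to derive the lemma from Item~\ref{i.transverse-close} of Proposition~\ref{Pro:generic}, applied to the pair $(\rho,\gamma)$: first push $x$ forward along the flow into $W^s_{loc}(\rho)$, then apply Item~\ref{i.transverse-close} there to get a transverse intersection of $W^u(\gamma)$ with $W^s_{loc}(\rho)$ near the pushed point, and finally transport that intersection point back by the flow onto $W^{ss}_{loc}(x)$.

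First I would record the relevant dimensions, using Proposition~\ref{Pro:strong-stable-Y}: $C(\sigma)$ carries a singular hyperbolic splitting $E^{ss}\oplus E^{cu}$ with $X(y)\in E^{cu}(y)$ for every $y\in C(\sigma)$, and the stable space of $\rho$ is $E^{ss}(\rho)\oplus E^c(\rho)$ with $\dim E^c=1$; hence $\dim W^s(\rho)=\dim E^{ss}+1$ and $\dim W^u(\rho)=\dim E^{cu}-1$. For the hyperbolic periodic orbit $\gamma\subset C(\sigma)$ the flow direction lies in $E^{cu}$ while $E^{ss}$ is its strong stable bundle (the remaining directions of $E^{cu}$ must be uniformly expanded, by area expansion), so $\dim W^u(\gamma)=\dim E^{cu}=\dim W^u(\rho)+1$. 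In particular $\dim W^u(\gamma)\ge\dim W^u(\rho)$, which is what is needed to invoke Item~\ref{i.transverse-close} with $\gamma_1=\rho$, $\gamma_2=\gamma$ (both being critical elements of the common class $C(\sigma)$), and moreover $\dim W^{ss}_{loc}(x)+\dim W^u(\gamma)=\dim M$, so the desired ``transverse intersection'' is transversality in the usual sense. I would also note that $C(\sigma)$ being Lyapunov stable forces $W^u(\gamma)\subset C(\sigma)$, so every point of $W^u(\gamma)$ is a regular point of $C(\sigma)$.

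Then I would choose $T>0$ large enough that $x':=\varphi_T(x)\in W^s_{loc}(\rho)$ (here $x$ is automatically a regular point, since the only singularity whose forward orbit converges to $\rho$ is $\rho$ itself); thus $x'\in C(\sigma)\setminus\{\rho\}$ is regular. Next consider the flow saturation $\mathrm{Orb}(W^{ss}_{loc}(x))=\bigcup_{\tau}\varphi_\tau(W^{ss}_{loc}(x))$. A point of $W^{ss}_{loc}(x)$ has the same forward asymptotics as $x$, hence lies in $W^s(\rho)$, so $\mathrm{Orb}(W^{ss}_{loc}(x))\subset W^s(\rho)$; and near the regular point $x'$ it is an immersed submanifold of dimension $\dim E^{ss}+1=\dim W^s(\rho)$ — this uses $D\varphi_T(E^{ss}(x))=E^{ss}(x')$ and $X(x')\notin E^{ss}(x')$ — hence it contains an open neighborhood $O$ of $x'$ in $W^s(\rho)$, parametrized by $\varphi_{T+\tau}(w)$ with $w$ in a small $E^{ss}$-disc of $W^{ss}_{loc}(x)$ around $x$ and $\tau$ small. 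Applying Item~\ref{i.transverse-close} to $x'\in C(\sigma)\cap W^s_{loc}(\rho)$ with the neighborhood $O$ yields $y\in O$ with $y\in W^u(\gamma)\pitchfork W^s_{loc}(\rho)$. Writing $y=\varphi_{\tau_0}(w)$ with $w\in W^{ss}_{loc}(x)$, flow-invariance of $W^u(\gamma)$ gives $w=\varphi_{-\tau_0}(y)\in W^{ss}_{loc}(x)\cap W^u(\gamma)$. To finish, I would push the transversality $T_yW^u(\gamma)+T_yW^s_{loc}(\rho)=T_yM$ by $\varphi_{-\tau_0}$ to get $T_wW^u(\gamma)+T_wW^s(\rho)=T_wM$; since $w$ is a regular point of $C(\sigma)$ and $W^{ss}_{loc}(x)\subset W^s(\rho)$, one has $T_wW^{ss}_{loc}(x)=E^{ss}(w)$ and $T_wW^s(\rho)=E^{ss}(w)\oplus\RR X(w)$ (with $X(w)\in E^{cu}(w)\setminus\{0\}$), while $\RR X(w)\subset T_wW^u(\gamma)$; hence $T_wW^u(\gamma)+T_wW^{ss}_{loc}(x)=T_wM$, i.e.\ $w$ is a transverse intersection point of $W^{ss}_{loc}(x)$ and $W^u(\gamma)$.

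The step I expect to be the main obstacle is the ``localness'' of the conclusion: Item~\ref{i.transverse-close} is a Baire/connecting-lemma statement producing only an intersection of $W^u(\gamma)$ with the fixed disc $W^s_{loc}(\rho)$ near a prescribed point, with no a priori control on the strong stable leaf through $x$. The device of flowing $x$ forward into $W^s_{loc}(\rho)$, recognizing that $\mathrm{Orb}(W^{ss}_{loc}(x))$ already fills an open piece of $W^s(\rho)$ around $x'$, and then pulling the produced point back by the flow, is precisely what converts the genericity output into a statement about $W^{ss}_{loc}(x)$ itself; the final transversality bookkeeping is then free because $\RR X$ is contained both in $T W^u(\gamma)$ and in $T W^s(\rho)$ along $W^{ss}(x)$.
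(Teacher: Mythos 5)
Your proposal is correct and takes essentially the same approach as the paper: both invoke Item~\ref{i.transverse-close} of Proposition~\ref{Pro:generic} to get a transverse intersection of $W^u(\gamma)$ with $W^s_{loc}(\rho)$ near the (forward-flowed) point, and then use the flow-translation of the strong stable foliation of $W^s(\rho)$ — which is one-codimensional in $W^s(\rho)$, so its orbit saturation fills an open piece — to carry the produced intersection point onto $W^{ss}_{loc}(x)$. The paper phrases this more compactly ($\varphi_t(W^{ss}(y))=W^{ss}(x)$ for some $t$) and leaves the dimension count and final transversality bookkeeping implicit, but the underlying argument is identical.
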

\begin{proof}
Consider $x\in (W^s(\rho)\cap C(\sigma))\setminus \{\rho\}$.
Without loss of generality, we may assume that $x$ belongs to $W^s_{loc}(\rho)$.

By the Item~\ref{i.transverse-close} of Proposition~\ref{Pro:generic}, there exists a transverse intersection point $y$ between
$W^{s}_{loc}(\rho)$ and $W^u(\gamma)$ near $x$. The flow preserves $W^u(\gamma)$, $W^s(\rho)$
and the strong stable foliation of $W^s(\rho)$; moreover since the strong stable foliation is one-codimensional inside
$W^s(\rho)$ and since $x,y$ are two points close in $W^s(\rho)\setminus W^{ss}(\rho)$,
there exists $t\in \RR$ such that $\varphi_t(W^{ss}(y))=W^{ss}(x)$. In particular, $\varphi_t(y)$
is a transverse intersection point between $W^{ss}(x)$ and $W^u(\gamma)$.
That point can be chosen arbitrarily close to $x$ in $W^{ss}(x)$, hence belongs to $W^{ss}_{loc}(x)$.
\end{proof}

Since the unstable manifold $W^u(\gamma_Y)$ and the
leaves of the strong stable foliation $W^{ss}_{loc,Y}(x)$ vary continuously for the $C^1$ topology with respect to $x$ and $Y$,
a compactness argument gives:
\begin{Corollary}\label{c:robust-singular}
For any singularity $\rho\in C(\sigma)$,
let $K_\rho$ be a compact subset of $W^s(\rho)\cap C(\sigma))\setminus \{\rho\}$.
Then there exists a neighborhood $V_\rho$ of $K_\rho$ and a $C^1$-neighborhood $\cU_\rho$
of $X$ such that for any $Y\in \cU_\rho$ and any $y\in V_\rho\cap C(\sigma_Y)$,
the submanifolds $W^{ss}_{loc, Y}(y)$ and $W^u(\gamma_Y)$ have a transverse intersection point.
\end{Corollary}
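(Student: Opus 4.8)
The plan is the standard compactness-and-robustness argument that upgrades the pointwise conclusion of the preceding Lemma to a statement uniform over the compact set $K_\rho$ and over a $C^1$-neighborhood of $X$. First I would fix a point $x\in K_\rho$. By the Lemma, $W^{ss}_{loc}(x)$ and $W^u(\gamma)$ have a transverse intersection point $z_x$, and the proof of the Lemma produces $z_x$ arbitrarily close to $x$, so we may take it in the interior of the embedded disc $W^{ss}_{loc}(x)$. Since $W^u(\gamma)=\bigcup_{t\ge 0}\varphi_t(W^u_{loc}(\gamma))$ is an increasing union of compact embedded submanifolds-with-boundary, $z_x$ lies in the interior of one such piece $D_x$, obtained by flowing the local unstable manifold of $\gamma$ forward for a bounded time. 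Thus $W^{ss}_{loc}(x)$ meets $D_x$ transversely at an interior point, both being compact.

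Next I would combine three facts. First, the compact piece $D_x$ has a well-defined continuation $D_{x,Y}\subset W^u(\gamma_Y)$ for $Y$ in a $C^1$-neighborhood of $X$ (using the hyperbolic continuation $\gamma_Y$ and continuous dependence of local unstable manifolds and of the flow), with $D_{x,Y}\to D_x$ in the $C^1$ topology as $Y\to X$. Second, as recalled in the Preliminaries, $W^{ss}_{loc,Y}(y)$ depends continuously, in the $C^1$ topology, jointly on $Y$ near $X$ and on $y$, where $y$ ranges over $C(\sigma_Y)$; so whenever $Y_n\to X$ and $y_n\to x$ with $y_n\in C(\sigma_{Y_n})$, one has $W^{ss}_{loc,Y_n}(y_n)\to W^{ss}_{loc}(x)$ in $C^1$. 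Third, transversality of two compact embedded submanifolds at an interior intersection point is a $C^1$-open condition. Putting these together, there exist a neighborhood $V_x$ of $x$ in $M$ and a $C^1$-neighborhood $\cU_x$ of $X$ such that for every $Y\in\cU_x$ and every $y\in V_x\cap C(\sigma_Y)$, the submanifolds $W^{ss}_{loc,Y}(y)$ and $D_{x,Y}$ have a transverse intersection point near $z_x$; in particular $W^{ss}_{loc,Y}(y)\pitchfork W^u(\gamma_Y)\neq\emptyset$.

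Finally, since $K_\rho$ is compact I would extract a finite subcover $K_\rho\subset V_{x_1}\cup\dots\cup V_{x_k}$ and set $V_\rho:=V_{x_1}\cup\dots\cup V_{x_k}$ and $\cU_\rho:=\cU_{x_1}\cap\dots\cap\cU_{x_k}$; then any $Y\in\cU_\rho$ and $y\in V_\rho\cap C(\sigma_Y)$ land in some $V_{x_i}$ and the previous paragraph applies. The argument is routine once the continuity statements are granted; the only point requiring care is the second fact --- matching the continuous dependence of the strong stable discs, which is stated for points of a single fixed invariant set, with the fact that here the relevant points $y$ live in the varying classes $C(\sigma_Y)$. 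This is handled by noting that for $Y$ close to $X$ the class $C(\sigma_Y)$ is contained in a fixed neighborhood $U$ of $C(\sigma)$ on which singular hyperbolicity --- hence the strong stable foliation and its continuous dependence --- is robust, together with the upper semi-continuity of $Y\mapsto C(\sigma_Y)$, which forces any limit of such points $y_n$ to lie in $C(\sigma)$.
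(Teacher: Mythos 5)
Your proof is correct and is exactly the compactness-plus-$C^1$-continuity argument the paper invokes in the one-line justification that precedes the Corollary; you have merely spelled out the details (fixing $x$, localizing to a compact piece $D_x$ of $W^u(\gamma)$, using $C^1$-openness of transversality, and taking a finite subcover). The point you flag at the end — reconciling continuity of the strong stable discs with the fact that the base points live in the varying classes $C(\sigma_Y)$ — is handled exactly as you describe, via robustness of the singular hyperbolic splitting on a fixed neighborhood of $C(\sigma)$ together with upper semi-continuity of $Y\mapsto C(\sigma_Y)$.
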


\begin{Lemma}\label{l:robust-regular}
For any invariant compact set $K_\mathrm{reg}\subset C(\sigma)\setminus \mathrm{Sing}(X)$,
there exists a neighborhood $V_\mathrm{reg}$ of $K_\mathrm{reg}$ and a $C^1$-neighborhood $\cU_\mathrm{reg}$
of $X$ such that for any $Y\in \cU_\mathrm{reg}$ and any $y\in V_\mathrm{reg}\cap C(\sigma_Y)$,
the submanifolds $W^{ss}_{loc, Y}(y)$ and $W^u(\gamma_Y)$ have a transverse intersection point.
\end{Lemma}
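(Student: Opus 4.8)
The plan is to reduce the regular case to the singular case handled in Corollary~\ref{c:robust-singular}, using the two structural facts we already have about $C(\sigma)$: it is a singular hyperbolic Lyapunov stable chain-recurrence class containing the hyperbolic periodic orbit $\gamma$, and (for $X\in\cG$) it is the homoclinic class $H(\gamma)$, so periodic orbits homoclinically related to $\gamma$ are dense in it. First I would show the pointwise statement: for any regular $x\in C(\sigma)$, the disc $W^{ss}_{loc}(x)$ meets $W^u(\gamma)$ transversely. Since $x$ is regular, $\RR.X(x)$ is a genuine line inside $E^{cu}(x)$ (by Proposition~\ref{Pro:strong-stable-Y}, Item~\ref{i.flow-direction-Y}); the relevant codimension for transversality of $W^{ss}_{loc}(x)$ against $W^u(\gamma)$ matches that of $W^s(\gamma)$, because $\dim W^{ss}(x)=\dim E^{ss}=\dim W^{ss}(\gamma)=\dim W^s(\gamma)-1$ and $W^{ss}_{loc}(x)$ is transverse to the flow direction. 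Now approximate $x$ by a point $x'\in C(\sigma)$ lying on $W^{s}(\gamma')$ for a periodic orbit $\gamma'$ homoclinically related to $\gamma$: by density of such periodic orbits and the inclination lemma, $W^u(\gamma')$ accumulates on $W^u(\gamma)$ in the $C^1$ topology on compact pieces, and $W^s(\gamma')\pitchfork W^u(\gamma)\neq\emptyset$; arguing as in the previous Lemma (sliding along the strong stable foliation of $W^s(\gamma')$, which is one-codimensional inside $W^s(\gamma')$), one produces a transverse intersection of $W^{ss}(x')$ with $W^u(\gamma)$ arbitrarily close to $x'$ in $W^{ss}(x')$, hence inside $W^{ss}_{loc}(x')$; since $x'$ can be taken arbitrarily close to $x$ and these objects vary continuously, the same holds at $x$.

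Once the pointwise statement holds at every regular point of $C(\sigma)$, the robustness is a standard compactness-plus-continuity argument, identical in form to the deduction of Corollary~\ref{c:robust-singular} from the preceding Lemma. For each $x\in K_\mathrm{reg}$ fix a transverse intersection point of $W^{ss}_{loc}(x)$ with $W^u(\gamma)$; transversality is an open condition, and both $W^{ss}_{loc,Y}(y)$ and (a fixed compact piece of) $W^u(\gamma_Y)$ vary continuously in the $C^1$ topology with respect to $y$ and $Y$; so there is a neighborhood $U_x$ of $x$ and a $C^1$-neighborhood $\cU_x$ of $X$ on which the transverse intersection persists for every $y\in U_x$ (regardless of whether $y\in C(\sigma_Y)$). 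Cover $K_\mathrm{reg}$ by finitely many such $U_{x_1},\dots,U_{x_k}$, set $V_\mathrm{reg}=\bigcup_i U_{x_i}$ and $\cU_\mathrm{reg}=\bigcap_i\cU_{x_i}$; then for any $Y\in\cU_\mathrm{reg}$ and any $y\in V_\mathrm{reg}$ (in particular any $y\in V_\mathrm{reg}\cap C(\sigma_Y)$), $W^{ss}_{loc,Y}(y)$ meets $W^u(\gamma_Y)$ transversely, as required. Note that Lyapunov stability of $C(\sigma)$ guarantees, via Proposition~\ref{Pro:generic} and Proposition~\ref{Pro:strong-stable-Y}, that $W^u(\gamma)\subset C(\sigma)$ and that the continuation $\gamma_Y$ still lies in $C(\sigma_Y)$, so the statement is not vacuous.

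The main obstacle is the pointwise step, specifically the passage from $x'\in W^s(\gamma')$ to $W^u(\gamma)$ rather than $W^u(\gamma')$: one must know that $W^u(\gamma')$ can be replaced by $W^u(\gamma)$ without losing transversality. This is exactly where the homoclinic relation between $\gamma$ and $\gamma'$ enters — $W^s(\gamma)\pitchfork W^u(\gamma')\neq\emptyset$, so by the $\lambda$-lemma $W^u(\gamma')$ $C^1$-accumulates on $W^u(\gamma)$ along compact discs through any such intersection point, and a transverse intersection of $W^{ss}(x')$ with $W^u(\gamma')$ persists under this $C^1$-perturbation. A secondary subtlety is checking that the dimensions genuinely match so that ``transverse intersection'' is a codimension-zero (generically nonempty) condition; this uses Item~\ref{i.strong-stable-outside-Y} of Proposition~\ref{Pro:strong-stable-Y} near singularities and is automatic at regular points because $\dim W^{ss}_{loc}(x)+\dim W^u(\gamma)=\dim E^{ss}+\dim W^u(\gamma)$, while the flow line accounts for the missing dimension inside $W^s(\gamma)$. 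Everything else is the routine Baire/compactness packaging already used twice above.
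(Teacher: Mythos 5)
Your overall structure --- first establish a pointwise statement at every regular point, then get robustness by compactness and openness of transversality --- is reasonable, and the robustness paragraph is essentially the same compactness argument the paper uses. The gap is in the pointwise step, and it is not where you located it. You want to conclude that $W^{ss}_{loc}(x)$ meets $W^u(\gamma)$ transversely by producing such intersections for a sequence $x'\to x$ and invoking continuity. But a sequence of transverse intersection points $z'\in W^{ss}_{loc}(x')\cap W^u(\gamma)$ with $x'\to x$ need not yield a transverse intersection for $W^{ss}_{loc}(x)$: the $z'$ could run off to deeper and deeper compact pieces of the immersed manifold $W^u(\gamma)$, or the angles could degenerate to tangency in the limit. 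To pass to the limit you need uniformity: a fixed scale $r>0$ and a fixed compact family of $C^1$ discs in $W^u(\gamma)$, independent of $x'$ near $x$, against which $W^{ss}_{loc}(x')$ is uniformly transverse. The density of periodic orbits $\gamma'$ homoclinically related to $\gamma$ in $H(\gamma)=C(\sigma)$ does not deliver this: such $\gamma'$ may pass arbitrarily close to singularities, so the size of $W^u_{loc}(\gamma')$ --- and hence the scale at which the inclination lemma produces a piece of $W^u(\gamma)$ $C^1$-close to it --- is not controlled and can shrink to zero as $x'\to x$. This is exactly the phenomenon singular hyperbolicity allows and uniform hyperbolicity forbids.

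The paper sidesteps this by noting that $K_{\mathrm{reg}}$ is a compact invariant subset of the singular hyperbolic set $C(\sigma)$ that avoids the singularities, hence is \emph{uniformly} hyperbolic for the flow. The shadowing lemma applied to $K_{\mathrm{reg}}$ then produces finitely many periodic orbits $\gamma_1,\dots,\gamma_\ell$ contained in an arbitrarily small neighborhood of $K_{\mathrm{reg}}$ --- therefore uniformly far from the singularities, with uniform hyperbolicity and uniform-size local invariant manifolds --- and local product structure gives, for every $y\in K_{\mathrm{reg}}$, a transverse intersection of $W^{ss}_{loc}(y)$ with some $W^u(\gamma_i)$ at a definite scale. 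Each $\gamma_i$ lies in $C(\sigma)$ and is homoclinically related to $\gamma$ by Item~\ref{i.one-class-related} of Proposition~\ref{Pro:generic}, and the inclination lemma then upgrades $W^u(\gamma_i)$ to $W^u(\gamma)$ --- this last step you did anticipate correctly. What your argument is missing is precisely the shadowing/uniform-hyperbolicity input that supplies uniformity at the base; without it the limiting argument from $x'$ to $x$ does not close.
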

\begin{proof}
Since $C(\sigma)$ is singular hyperbolic, the invariant compact set $K_\mathrm{reg}\subset C(\sigma)\setminus \mathrm{Sing}(X)$
is hyperbolic. By the shadowing lemma, and a compactness argument, there exist finitely many
periodic orbits $\gamma_1,\dots,\gamma_\ell$ in an arbitrarily small neighborhood of $K_\mathrm{reg}$
such that for any $y\in K_\mathrm{reg}$, the submanifolds $W^{ss}_{loc}(y)$ intersects transversally
some $W^u(\gamma_i)$. Moreover each $\gamma_i$ is in the chain-recurrence class of a point
of $K_\mathrm{reg}$, hence is contained in $C(\sigma)$. By the Item~\ref{i.one-class-related} of Proposition~\ref{Pro:generic}, $\gamma_i$ is homoclinically related to $\gamma$.
The inclination lemma then implies that for any $y\in K_\mathrm{reg}$, the submanifolds $W^{ss}_{loc}(y)$ intersects transversally
$W^u(\gamma)$. As before this property is $C^1$-robust.
\end{proof}

\begin{proof}[Proof of Theorem~\ref{Thm:unstable-dense}]
Let $\rho_1,\dots,\rho_\ell$ be the singularities contained in $C(\sigma)$.
For each of them, one chooses a compact set $K_i\subset W^s(\rho_i)\cap C(\sigma))\setminus \{\rho_i\}$
which meets each orbit of $W^s(\rho_i)\cap C(\sigma))\setminus \{\rho_i\}$.
The Corollary~\ref{c:robust-singular} associates open sets $V_i$ and $\cU_i$.
There exists a neighborhood $O_i$ of $\rho_i$ such that any point
$z\in O_i\setminus W^u_{loc}(\rho_i)$ has a backward iterate in a compact subset of $V_i$.

Let $K_\mathrm{reg}$ be the maximal invariant set of $C(\sigma)\setminus \cup_i (V_i\cup O_i)$.
The Lemma~\ref{l:robust-regular} associates open sets $\cU_\mathrm{reg}$ and $V_\mathrm{reg}$.
By construction, any point $z\in C(\sigma)$ either belongs to some $W^u(\rho_i)$,
has a backward iterate in a compact subset of some $V_i$,
or belongs to $K_\mathrm{reg}$.
Since $C(\sigma_Y)$ varies upper semi-continuously with $Y$,
this property is still satisfied  for $Y$ in a $C^1$-neighborhood $\cU_\mathrm{return}$ of $X$.

We set $\cU_X=\cU_1\cap\dots\cap\cU_\ell\cap\cU_\mathrm{reg}\cap \cU_\mathrm{return}$.
For any $Y\in \cU_X$ and any $x\in C(\sigma)$ which does not belong to the unstable manifold of a singularity
$\rho_{i,Y}$, there exists a backward iterate $\varphi^Y_{-t}(x)$ which belongs either to some
$V_i$ or to $V_\mathrm{reg}$.
In both cases, $W^{ss}_{loc,Y}(\varphi^Y_{-t}(x))$ intersects $W^{u}_{Y}(\gamma_Y)$ transversely.
This concludes.
\end{proof}

\section{Density of the stable manifold}\label{s:stable-dense}

This section is devoted to the following result which will be used to prove the density of the stable manifold.

\begin{Theorem}\label{Thm:stable-dense}
There is a dense $G_\delta$ set ${\cal G}$ in ${\cal X}^1(M)$ such that for any $X\in{{\cal G}}$, for any singular hyperbolic transitive attractor $C(\sigma)$ of $X$,
and for any hyperbolic periodic orbit $\gamma$ in $C(\sigma)$, there exist a neighborhood $U$ of $C(\sigma)$ and a neighborhood ${\cal U}_X$ of $X$ with the following property.

For any $Y\in{\cal U}_X$, the stable manifold $W^s_Y(\gamma_Y)$ is dense in $U$.
Moreover, for any periodic orbit $\gamma'\subset C(\sigma_Y)$,
the set of transverse intersections $W^u_Y(\gamma')\pitchfork W^s_{Y}(\gamma_Y)$ is dense in $W^u_Y(\gamma')$.
\end{Theorem}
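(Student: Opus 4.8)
The plan is to establish the statement first for a $C^1$-generic $X$ and then propagate it to a $C^1$-neighbourhood. Let $\mathcal G$ be the intersection of the dense $G_\delta$ sets provided by Proposition~\ref{Pro:generic} and by Theorem~\ref{Thm:unstable-dense}; this will be the set $\mathcal G$ in Theorem~\ref{Thm:stable-dense}. Fix $X\in\mathcal G$, a singular hyperbolic transitive attractor $C(\sigma)$ of $X$, and a hyperbolic periodic orbit $\gamma\subseteq C(\sigma)$. Since $C(\sigma)$ is a transitive attractor, Item~\ref{i.chain-homo} of Proposition~\ref{Pro:generic} gives $C(\sigma)=H(\gamma)$, hence $W^u(\gamma)\subseteq C(\sigma)$ and $\overline{W^u(\gamma)}=C(\sigma)$, and one fixes a trapping neighbourhood $U$ with $\bigcap_{t>0}\varphi_t(U)=C(\sigma)$. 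The argument then has four steps: (1) density of $W^s(\gamma)$ in $C(\sigma)$; (2) upgrading this to density of $W^s(\gamma)$ in $U$; (3) making the whole argument $C^1$-robust; (4) deducing the transversality ("Moreover'') clause. For Step (1) I would use that the hyperbolic periodic orbits homoclinically related to $\gamma$ are dense in $H(\gamma)=C(\sigma)$: for such an orbit $\gamma'$, pick a transverse intersection point $p\in W^s(\gamma)\pitchfork W^u(\gamma')$ and a disc $D\subseteq W^s(\gamma)$ with $T_pD=T_pW^s(\gamma)$, so $D$ is transverse to $W^u(\gamma')$ at $p\in W^u(\gamma')$; applying the inclination lemma to the reversed flow — for which $W^u(\gamma')$ and $W^s(\gamma')$ are the local stable and unstable manifolds of $\gamma'$ — the discs $\varphi_{-t}(D)\subseteq W^s(\gamma)$ converge in the $C^1$ topology to $W^s_{loc}(\gamma')$. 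Hence $\overline{W^s(\gamma)}\supseteq\gamma'$ for every such $\gamma'$, so $\overline{W^s(\gamma)}\supseteq C(\sigma)$.

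Step (2) is the crux. Fix $z\in U$ and a small ball $B=B(z,r)\subseteq U$; the goal is $B\cap W^s(\gamma)\neq\emptyset$. Since $U$ is trapping and $C(\sigma)$ is an attractor, $\varphi_t(\overline B)$ converges to $C(\sigma)$ in the Hausdorff topology; since $\varphi_t$ uniformly expands the center-unstable cone field near $C(\sigma)$ (area expansion of $E^{cu}$ together with the domination), the images $\varphi_t(B)$ acquire unbounded extent along the center-unstable directions while staying in an arbitrarily small neighbourhood of $C(\sigma)$, with geometry controlled by the dominated splitting. Consequently, for a suitable $t^*$ the open set $\varphi_{t^*}(B)$ contains a disc $D^*$ of a prescribed size $\ell_0$ tangent to the center-unstable cone field and lying in an arbitrarily small neighbourhood of $C(\sigma)$. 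On the other hand, by Step (1) the regular points of $W^s(\gamma)$ are dense in $C(\sigma)$, and at such a point $w$ the strong stable manifold $W^{ss}_{loc}(w)$ is contained in $W^s(\gamma)$ (its points are forward asymptotic to the orbit of $w$, hence to $\gamma$), so $W^s(\gamma)$ carries a disc $S^*\ni w$ of a uniform size $\ell_1$ tangent to the center-stable cone field (around $E^{ss}\oplus\RR X$). Choosing $\ell_0\le\ell_1$ and $t^*$ large enough that the centre of $D^*$ is very close to such a $w$, and using that the center-unstable and center-stable cone fields are transverse, $D^*$ and $S^*$ must intersect, transversally. Since $D^*\subseteq\varphi_{t^*}(B)$ and $W^s(\gamma)$ is flow-invariant, $\varphi_{-t^*}$ carries a point of $D^*\cap W^s(\gamma)$ into $B\cap W^s(\gamma)$. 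Hence $W^s(\gamma)$ is dense in $U$.

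For Steps (3) and (4): all ingredients of Steps (1)–(2) are $C^1$-robust except the identity $C(\sigma)=H(\gamma)$ used in Step (1). Indeed, singular hyperbolicity — and with it the two cone fields, the area expansion and the uniform size of strong stable plaques — persists, the trapping neighbourhood $U$ of the attracting set persists, and the transverse homoclinic point of $W^s(\gamma)$ and $W^u(\gamma)$ persists. To replace Step (1) robustly I would invoke Theorem~\ref{Thm:unstable-dense}: for $Y$ close to $X$ it provides, for every point $x$ of the maximal invariant set $\Lambda_Y\subseteq U$ lying outside the unstable manifolds of the singularities, a transverse intersection $W^{ss}_{loc,Y}(x)\pitchfork W^u_Y(\gamma_Y)$, so $W^u_Y(\gamma_Y)$ is dense in $\Lambda_Y$ up to strong stable holonomy; combined with the persistent transverse homoclinic point $h_Y\in W^s_Y(\gamma_Y)\pitchfork W^u_Y(\gamma_Y)$, the reversed inclination lemma at $h_Y$ together with this strong stable holonomy and the forward dynamics transports center-stable discs of $W^s_Y(\gamma_Y)$ to a uniform size near a dense subset of $\Lambda_Y$ — which is exactly the input needed to run Step (2) for $Y$. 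This yields a neighbourhood $\mathcal U_X$ of $X$ such that $W^s_Y(\gamma_Y)$ is dense in $U$ for all $Y\in\mathcal U_X$. For the "Moreover'' clause, note that a periodic orbit $\gamma'\subseteq C(\sigma_Y)$ lies in a singular hyperbolic set, so it is hyperbolic and $W^u_Y(\gamma')$ is tangent to $E^{uu}_Y$; its dimension $\dim E^{uu}$ is complementary to that of $W^s_Y(\gamma_Y)$, which is tangent to $E^{ss}_Y\oplus\RR Y$. Hence every intersection point of $W^s_Y(\gamma_Y)$ with $W^u_Y(\gamma')$ is automatically transverse, and the density of $W^s_Y(\gamma_Y)$ in $U$ together with the uniform geometry of its center-stable sheets forces such intersection points arbitrarily close to any given point of $W^u_Y(\gamma')$; thus $W^u_Y(\gamma')\pitchfork W^s_Y(\gamma_Y)$ is dense in $W^u_Y(\gamma')$.

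The main obstacle is Step (2) and its robust counterpart. Singular hyperbolicity, unlike uniform hyperbolicity, provides no stable lamination of a full neighbourhood of the attractor — the singularities obstruct it — so the passage from "dense in $C(\sigma)$'' to "dense in $U$'' cannot be done by a holonomy argument and must instead exploit the area expansion of $E^{cu}$: this is what forces the forward iterates of a small ball to contain center-unstable discs of a definite size sitting close to $C(\sigma)$, which are then crossed against the uniformly large center-stable plaques carried by $W^s(\gamma)$. Making this robust is delicate precisely because the homoclinic-class description of $C(\sigma)$ is not known to persist, and this is exactly the point where Theorem~\ref{Thm:unstable-dense} has to be brought in.
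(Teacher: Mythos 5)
Your Step (2) is where the argument breaks. You claim that area expansion of $E^{cu}$ together with the domination forces the forward iterates $\varphi_t(B)$ to contain a cu-disc $D^*$ of prescribed inner radius $\ell_0$ tangent to the cu-cone. But singular hyperbolicity only gives uniform expansion of \emph{two-dimensional} volumes in $E^{cu}$; it does \emph{not} imply that individual cu-directions expand. Crucially, the flow direction lies inside $E^{cu}$ (Proposition~\ref{Pro:strong-stable-Y}, Item~\ref{i.flow-direction-Y}), and along a piece of orbit lingering near a singularity $\rho$ the norm $\|Y\|$ is arbitrarily small; a complementary cu-direction must compensate by expanding sharply so that the $2$-plane inequality~\eqref{e.cu-expansion} holds, while the flow-tangent extent of the iterated set collapses. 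Thus $\varphi_{t^*}(B)$ can, near the region you are aiming at, be a ribbon whose cu-part is very long in one direction and arbitrarily thin in another, and such a set contains no $\dim E^{cu}$-dimensional cu-disc of inner radius bounded below. The object $D^*$ you want to intersect with the stable plaque $S^*$ may simply not exist. This is precisely the obstruction that the cross-section and holonomy machinery of Section~\ref{s:stable-dense} is built to overcome, and it cannot be elided.

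Concretely, the paper never tries to control $\dim E^{cu}$-dimensional cu-discs directly under the flow. It chooses a cross section $D$ whose core $\Delta$ is kept away from the singularities, and works with \emph{cu-sections} of dimension $\dim E^{cu}-1$, i.e.\ the quotient by the flow direction. Because both endpoints of a holonomy lie in $\Delta$, where $\|Y\|$ is bounded above and below, area expansion of $E^{cu}$ translates into genuine minimal-norm $10$-expansion of cu-sections by the return maps (Lemma~\ref{l.expanding}). The inner radius then grows inductively (Proposition~\ref{Prop:stable-dense}); this also handles a subtlety your proposal does not see at all: a cu-section may hit $W^s_{loc}(\rho)\cap D$, and the paper shows (a cu-section meets that set in at most one point, so cutting along it costs at most a factor $3$ of inner radius) that the $10$-expansion still wins. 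Only once a cu-section of inner radius $\geq\varepsilon_0$ is produced inside $\Delta$ is it thickened by flowing for time $|s|\leq\varepsilon_0$ into a genuine $\dim E^{cu}$-dimensional cu-disc of definite size — legitimate precisely because in $\Delta$ the flow norm is bounded below. Finally, note that your Step (3) is also not what the paper does: the paper does not invoke Theorem~\ref{Thm:unstable-dense} in this proof; it simply observes that transversal intersection of $W^s(\gamma)$ with every $\varepsilon$-size cu-disc near $C(\sigma)$ is an open condition, hence persists, and combines this with the robust Proposition~\ref{Prop:stable-dense}. Your substitute, transporting center-stable plaques by a combination of inclination lemma and strong-stable holonomy, is too vague to evaluate and in any case still needs the definite-size cu-disc that Step (2) fails to supply.
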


The set  ${\cal G}$ is the dense G$_\delta$ subset of vector fields satisfying the Items~\ref{i.KS} and~\ref{i.chain-homo} of Proposition~\ref{Pro:generic}.
In the whole section, we fix $X\in {\cal G}$ and $C(\sigma)$, $\gamma$ as in the statement of Theorem~\ref{Thm:stable-dense}.

\subsection{Center-unstable cone fields}\label{s.cones}
We consider the singular hyperbolic splitting $T_{C(\sigma)}M=E^{ss}\oplus E^{cu}$ on $C(\sigma)$
(see the Item~\ref{i.splitting-type-Y} of Proposition~\ref{Pro:strong-stable-Y}). These bundles may be extended as a continuous (but maybe not invariant)
splitting 
$T_{U_0}M=\widetilde E^{ss}\oplus \widetilde E^{cu}$ on a neighborhood $U_0$ of $C(\sigma)$.
For $x\in U_0$ and $\alpha>0$ we define the \emph{center-unstable cone}
$${\cal C}_{\alpha}(x)=\{v\in T_x M:~v=v^s+v^c,~v^s\in \widetilde E^{ss}(x),~v^c\in \widetilde E^{cu}(x),~\|v^c\|\le \alpha \|v^{s}\|\}.$$

Since the splitting is uniformly dominated, we have the following lemma. 
\begin{Lemma}\label{Lem:cone-for-tangent}
 For any $\alpha>\beta>0$, there are $T>0$, a $C^1$-neighborhood $\cU_1$ of $X$ and neighborhood $U_1\subset U_0$ of $C(\sigma)$  such that for any $Y\in\cU_1$ and for any orbit segment $\varphi^Y_{[0,t]}(x)\subset U_1$ with $t\ge T$,
$$D\varphi_t^Y({\cal C}_\alpha(x))\subset {\cal C}_{\beta}(\varphi_t^Y(x)).$$
\end{Lemma}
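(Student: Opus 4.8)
This is the classical statement that forward iteration contracts a center‑unstable cone field attached to a dominated splitting, made robust; the only ingredient is the domination $E^{ss}\prec E^{cu}$ which is built into the definition of singular hyperbolicity of $C(\sigma)$ (of the splitting type given by Proposition~\ref{Pro:strong-stable-Y}). The first step is to prove cone contraction on $C(\sigma)$ itself for the vector field $X$. Recall that $v\in{\cal C}_\alpha(x)$ is written $v=v^s+v^c$ with $v^s\in\widetilde E^{ss}(x)$, $v^c\in\widetilde E^{cu}(x)$, the size of $v^s$ being small compared with that of $v^c$ (so ${\cal C}_\alpha$ is a thin cone around $\widetilde E^{cu}$). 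By singular hyperbolicity there is $T_0>0$ with $\|D\varphi_{T_0}|_{E^{ss}(x)}\|\,\|D\varphi_{-T_0}|_{E^{cu}(\varphi_{T_0}x)}\|\le 1/2$ for all $x\in C(\sigma)$; composing along the orbit and absorbing the non‑integer part of the time into a uniform constant yields $\|D\varphi_{t}|_{E^{ss}(x)}\|\,\|D\varphi_{-t}|_{E^{cu}(\varphi_{t}x)}\|\le C_0\,2^{-t/T_0}$ for $x\in C(\sigma)$ and $t\ge 0$. Since the bundles are invariant over $C(\sigma)$, $D\varphi_t v$ has $E^{ss}$‑part $D\varphi_tv^s$ and $E^{cu}$‑part $D\varphi_tv^c$, so the ratio of their norms is at most $C_0\,2^{-t/T_0}$ times the one at $x$; fixing $T$ with $\alpha C_0 2^{-T/T_0}<\beta$ gives $D\varphi_t({\cal C}_\alpha(x))\subset{\cal C}_\beta(\varphi_tx)$ for $x\in C(\sigma)$, $t\ge T$, and in fact with a uniform margin (the image lands in the interior of ${\cal C}_\beta$).

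Next I would transfer the one‑step estimate to a neighborhood and to $C^1$‑nearby vector fields. The extended bundles $\widetilde E^{ss},\widetilde E^{cu}$ are continuous and agree with $E^{ss},E^{cu}$ along $C(\sigma)$, and $(Y,x)\mapsto D\varphi^Y_{\pm T_0}(x)$ is continuous; hence, after possibly enlarging $C_0$, the per‑step inequality $\|D\varphi^Y_{T_0}|_{\widetilde E^{ss}(x)}\|\,\|D\varphi^Y_{-T_0}|_{\widetilde E^{cu}(\varphi^Y_{T_0}x)}\|\le 2/3$, together with the ``almost invariance'' estimates controlling how far $D\varphi^Y_{T_0}(\widetilde E^{ss}(x))$ and $D\varphi^Y_{-T_0}(\widetilde E^{cu}(\varphi^Y_{T_0}x))$ lie from $\widetilde E^{ss},\widetilde E^{cu}$ at the endpoint, hold for every $Y$ in a $C^1$‑neighborhood $\cU_1$ of $X$ and every $x$ with $\varphi^Y_{[0,T_0]}(x)\subset U_1$, provided $U_1\subset U_0$ and $\cU_1$ are small enough — these are open conditions valid on $C(\sigma)\times\{X\}$.

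Finally I would telescope this into genuine cone invariance along segments. Given $Y\in\cU_1$ and an orbit segment $\varphi^Y_{[0,t]}(x)\subset U_1$ with $t\ge T$, split $[0,t]$ into consecutive length‑$T_0$ blocks (plus a bounded remainder) and follow $D\varphi^Y_s v$ for $v\in{\cal C}_\alpha(x)$ block by block: on each block the $\widetilde E^{ss}$‑component shrinks relative to the $\widetilde E^{cu}$‑component by the domination factor $<1$ of the previous step, while the defect produced by the non‑invariance of the extended splitting contributes only a bounded per‑block term; summing the resulting geometric series (convergent because the contraction factor is $<1$) keeps the ratio below $\beta$ as soon as $T$ is large and $U_1,\cU_1$ are thin. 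Intersecting the neighborhoods obtained at each step gives the $\cU_1$ and $U_1$ of the statement.

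\textbf{Main obstacle.} The delicate point is the last step: because $\widetilde E^{ss}\oplus\widetilde E^{cu}$ is not $D\varphi^Y_t$‑invariant one cannot iterate cone inclusions directly, and one must run the standard argument showing that the geometric contraction coming from domination beats the accumulated non‑invariance error — this is precisely why the conclusion is claimed only for $t\ge T$ and only along segments confined to the thin neighborhood $U_1$. One may alternatively quote the general fact that a dominated splitting over a compact invariant set extends to a continuous cone field over a neighborhood that is eventually forward‑invariant along orbit segments staying there, robustly under $C^1$ perturbations, and apply it to $C(\sigma)$.
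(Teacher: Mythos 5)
Your argument is correct and matches the paper's, whose entire proof is that the estimate over $C(\sigma)$ for $X$ is standard and extends to $(Y,x)$ near $(X,C(\sigma))$ by continuity of $(x,t,Y)\mapsto D\varphi^Y_t(x)$ for $t\in[T,2T]$. One remark: the non-invariance ``obstacle'' you flag is disposed of more cleanly than a geometric-series estimate suggests --- over $C(\sigma)$ one can choose $T$ so that $D\varphi^X_t(\mathcal{C}_\alpha(x))\subset\mathcal{C}_\beta(\varphi^X_t(x))$ for all $t\in[T,2T]$ holds with a compact margin; this strict inclusion then persists verbatim for $(Y,x)$ in a neighborhood by uniform continuity of $D\varphi^Y_t$ and of the extended cone field, and composing $m$ iterates of $D\varphi^Y_T$ (each sending $\mathcal{C}_\alpha$ into $\mathcal{C}_\beta\subset\mathcal{C}_\alpha$) with one final $D\varphi^Y_r$, $r\in[T,2T)$, covers every $t\ge T$ with no accumulated error to control.
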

\begin{proof}
The proof for the vector field $X$ above the class $C(\sigma)$ is standard, hence is omitted.
The conclusion extends to neighborhoods of $X$ and $C(\sigma)$ by continuity of the cone fields and of $D\varphi^Y_t(x)$, $t\in [T,2T]$, with respect to $(x,t,Y)$.
\end{proof}

In the following we fix $\alpha>0$ small and set $\cC=\cC_a$. Note that (by increasing $T$, reducing $\cU_1$, $U_1$ and using the singular hyperbolicity)
the following property holds: for any $Y\in\cU_1$, any orbit segment $\varphi^Y_{[0,t]}(x)\subset U_1$ with $t\ge T$
and any $2$-dimensional subspace $P\subset {\cal C}_\alpha(x)$, 
\begin{equation}\label{e.cu-expansion}
|{\det}(D\varphi^Y_t)|_{P}|\ge 2.
\end{equation}
\medskip

The \emph{inner radius} $r$ of a submanifold $\Gamma$ is the supremum of $R>0$
such that the exponential map $B(0,R)\subset T_x\Gamma\to \Gamma$ with respect to the metric induced on $\Gamma$ by the Riemannian metric of $M$ is well defined and injective for some $x\in \Gamma$.
Note that $\Gamma$ always contain a submanifold $\Gamma'\subset \gamma$
with same inner radius $r$ and with diameter smaller than or equal to $2r$:
indeed one considers a sequence of balls $B(x_k,R_k)\subset \Gamma$ with $R_k\to R$
that are the images of exponential maps and consider a limit point $x$ for $(x_k)$;
the submanifold $\Gamma'$ is the ball $B(x,R)\subset \Gamma$.

Theorem~\ref{Thm:stable-dense} 
is a consequence of the next proposition.

\begin{Proposition}\label{Prop:stable-dense}
Under the setting of Section~\ref{s.cones},
there exists $\varepsilon>0$, a $C^1$-neighborhood $\cU_2\subset \cU_1$ of $X$
and a neighborhood $U_2$ of $C(\sigma)$
with the following property.

For any $Y\in \cU_2$ and any submanifold $\Gamma\subset U_2$
of dimension $\dim(E^{cu})$, tangent to the center-unstable cone field $\mathcal C$ and satisfying $Y(x)\in T_x\Gamma$ for each $x\in \Gamma$,
there is $t>0$ such that $\varphi^Y_{[0,t]}(\Gamma):=\cup_{s\in [0,t]}\varphi^Y_{s}(\Gamma)$ contains a submanifold
tangent to $\cC$, with dimension $\dim(E^{cu})$ and inner radius larger than $\varepsilon$.
\end{Proposition}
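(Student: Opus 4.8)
\medskip

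The plan is to show that any plaque $\Gamma$ tangent to $\cC$ and containing the flow direction, when pushed forward, must grow until its inner radius exceeds a fixed $\varepsilon$, using the area expansion~\eqref{e.cu-expansion} as the engine and the domination Lemma~\ref{Lem:cone-for-tangent} to keep the pushforwards tangent to $\cC$. The key point is to control what prevents growth: the only obstruction is that an orbit segment of $\Gamma$ leaves the neighborhood $U_1$ (where the cone and expansion estimates hold), or that a singularity sits inside $\Gamma$ (where the flow direction degenerates). So the argument is a dichotomy between "the forward orbit of $\Gamma$ stays near $C(\sigma)$ long enough to expand area without bound" versus "it escapes," and I must rule out the escape.

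\medskip

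First I would set up quantitative constants: choose $\cU_2\subset\cU_1$ and $U_2\subset U_1$ small enough that $U_2$ has compact closure in $U_1$, fix the time $T$ and the cone parameters from Lemma~\ref{Lem:cone-for-tangent}, and pick $\varepsilon>0$ small compared with the "width" of $U_1$ around $C(\sigma)$ and with the local strong-stable manifold sizes. The second step is the geometric heart: by Lemma~\ref{Lem:cone-for-tangent}, as long as $\varphi^Y_{[0,s]}(x)\subset U_1$ for all $x\in\Gamma$ with $s\le t$, the image $\varphi^Y_t(\Gamma)$ stays tangent to $\cC$; and by~\eqref{e.cu-expansion}, since $\Gamma$ is $\dim(E^{cu})$-dimensional and tangent to the cone, the $\dim(E^{cu})$-dimensional volume of $\varphi^Y_t(\Gamma)$ grows at least like $2^{\lfloor t/T\rfloor}$ times $\mathrm{Vol}(\Gamma)$ (using a bounded-distortion / covering argument to pass from the $2$-plane determinant bound to the full $\dim(E^{cu})$-dimensional Jacobian, which is routine since $\dim E^{cu}$ is fixed and the $E^{ss}$-part is contracted, so all relevant Jacobians are comparable to products of the $2$-dimensional ones). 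A submanifold tangent to a narrow cone field with large volume but small inner radius would have to "fold" many times within a bounded region, and a packing/covering argument in $M$ contradicts this: volume bounded times inner radius $\le\varepsilon$ forces bounded volume (here one uses that $\Gamma$, being tangent to $\cC$ and carrying the flow direction, is locally a graph, so small inner radius genuinely bounds its volume inside $U_1$). Hence if the forward orbit of $\Gamma$ stays in $U_1$ for an arbitrarily long time, we are immediately done: some $\varphi^Y_t(\Gamma)$ contains a sub-disc tangent to $\cC$ of inner radius $>\varepsilon$, by extracting a ball of radius $\varepsilon$ from the exponential-map image as in the paragraph preceding the proposition.

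\medskip

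The remaining — and I expect hardest — step is to handle the case where a forward orbit of $\Gamma$ leaves $U_1$. The idea is that $\Gamma\subset U_2$ and $\Gamma$ is tangent to the center-unstable cone and contains the flow line, so $\Gamma$ is, roughly, a piece of an unstable-type surface near $C(\sigma)$; since $C(\sigma)$ is an attractor (this is where the "attractor" hypothesis of Theorem~\ref{Thm:stable-dense}, inherited via the setting of Section~\ref{s.cones}, gets used) it has a trapping neighborhood, so taking $U_2$ inside such a trapping region forces $\varphi^Y_t(\Gamma)\subset U_1$ for \emph{all} $t\ge0$, and no escape is possible. One must be slightly careful that the trapping neighborhood can be taken robustly and that it fits inside $U_1$; both follow from upper semicontinuity of $C(\sigma_Y)$ and robustness of the trapping property of attracting sets, after shrinking $\cU_2$. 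Thus the escape case does not actually occur, and combined with the previous paragraph the proof is complete; the only genuine work is the volume-versus-inner-radius estimate and the bounded-distortion passage from the $2$-plane expansion to full Jacobian expansion, both of which are standard for singular hyperbolic flows.
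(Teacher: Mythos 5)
Your approach is genuinely different from the paper's: you flow $\Gamma$ directly and argue by exponential growth of its $\dim(E^{cu})$-dimensional volume, whereas the paper passes to a cross section $D$, takes the $(\dim E^{cu}-1)$-dimensional \emph{cu-section} $N=\Gamma'\cap D$, and iterates holonomies using a \emph{minimal-norm} (i.e.\ length) expansion estimate (Lemma~\ref{l.expanding}). Your preliminary steps are fine: the trapping neighborhood does robustly eliminate the escape case, and the passage from the 2-plane area bound~\eqref{e.cu-expansion} to the full Jacobian on a $d$-plane inside $\cC$ is correct (with $d=\dim E^{cu}\ge 2$, if $\sigma_{d-1}\sigma_d\ge 2$ then $\sigma_1\cdots\sigma_d\ge 2^{d/2}$), so the volume of $\varphi^Y_t(\Gamma)$ does grow without bound while staying in $U$.

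The gap is the step ``small inner radius forces bounded volume.'' For a connected embedded $d$-submanifold tangent to a narrow cone and contained in a fixed bounded region this is simply false. Being \emph{locally} a graph over $E^{cu}$ does not prevent multi-sheeting over an annular region, nor does it prevent degeneration into a long thin strip: a spiral $\{(r\cos\theta,r\sin\theta,\epsilon\theta):\ r\in(1-\delta,1),\ \theta\in(0,1/\epsilon)\}$ with $\epsilon\ll\delta$ is tangent to an arbitrarily narrow cone around the horizontal, lies in the unit ball, has volume $\asymp\delta/\epsilon\to\infty$ and inner radius $\asymp\delta\to 0$. Worse, this is not a pathological artifact in the present setting: near a singularity the flow contracts the one-dimensional direction $E^c\subset E^{cu}$ while expanding $E^u$, so forward iterates of $\Gamma$ really do become long and thin; the ``expansion'' recorded by the volume lives partly in the flow direction, whose norm is bounded and therefore contributes nothing to the inner radius. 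The paper's whole point in working with cross sections is to quotient out the flow direction: on a cu-section, transversality to $Y$ converts the 2-plane area expansion into a uniform lower bound $\chi>10$ on the \emph{minimal norm} of $D\pi$, which is a genuine length estimate and directly forces the inner radius to grow by a definite factor at each return, after the factor-of-three loss due to the at-most-one intersection point with $W^s_{loc}(\rho_Y)$. That length estimate, together with the transversality claim handling the cut by stable manifolds of singularities, is the content you would need to supply; the volume argument cannot replace it.
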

The proof of this proposition is postponed to the end of the Section~\ref{s:stable-dense}.

\begin{proof}[Proof of Theorem~\ref{Thm:stable-dense}]
Since $C(\sigma)$ is a transitive attractor, it is the chain-recurrence class of $\gamma$.
By Item~\ref{i.chain-homo} of Proposition~\ref{Pro:generic}, $C(\sigma)$ coincides with the homoclinic class $H(\gamma)$ of $\gamma$. Hence, the stable manifold of $\gamma$ is dense in $C(\sigma)$, hence
intersects transversally any submanifold of dimension $\dim(E^{cu})$ tangent to $\cC$
of inner radius $\varepsilon$ and which meets a small neighborhood $U_3\subset U_2$
of $C(\sigma)$. By continuation of the stable manifold,
this property is still satisfied for vector fields $Y$ in a small $C^1$-neighborhood $\cU_X\subset \cU_2$
of $X$. Since $C(\sigma)$ is an attractor, one may reduce $\cU_X$,
choose a smaller neighborhood $U\subset U_3$ of $C(\sigma_X)$
and assume that
$\varphi^Y_t(\overline U)\subset U_3$ for any $t>0$ and $Y\in \cU_X$.
Let $O$ be a small isolating neighborhood of ${\rm Sing}(X)$.
By the Item 2 of Proposition~\ref{Pro:strong-stable-Y}, $X$ is tangent to $\cC$ on $C(\sigma_X)\setminus O$.
Up to reduce the neighborhoods $U$ and $\cU_2$,
one can thus require that the vector fields $Y\in \cU_2$ are tangent to $\cC$ on $U\setminus O$.

In order to check that $W^s_Y(\gamma_Y)$ is dense in $U$
for any $Y\in \cU_X$,
we choose an arbitrary non-empty open subset $V\subset U\setminus O$ and a submanifold
$\Gamma\subset V$ as in the statement of Proposition~\ref{Prop:stable-dense}:
such a submanifold can be built by choosing a small disc $\Sigma$ of dimension $\dim(E^{cu})-1$ tangent to $\cC$
and transverse to $X$; then we set $\Gamma=\cup_{|t|<\delta}\varphi^Y_t(\Sigma)$ for $\delta$ close to $0$;
since $Y$ is tangent to $\cC$ on $U\setminus O$, the submanifold $\Gamma$ is tangent to $\cC$ as required.
From the choice of $U$ and the Proposition, there exists $t>0$ such that $\varphi^Y_t(\Gamma)$ has inner radius larger than $\varepsilon$,
intersects $U_3$, is still tangent to $\mathcal C$, hence meets $W^s(\gamma_Y)$ transversally. This shows that $W^s(\gamma_Y)$ intersects $V$ as required.
Let us now consider an open set $V\subset O$. Since $O$ is isolating for $Y$, there exists a forward iterate
$\varphi^Y_t(V)$ which meets $U\setminus O$. In particular $W^s_Y(\gamma_Y)$ meets $\varphi^Y_t(V)$, hence $V$.
 We have proved that $W^s_Y(\gamma_Y)$ is dense in $U$.

For the last part of the Theorem, one considers a small open subset of $W^u_Y(\gamma')$. It is a submanifold $\Gamma$
as considered above, hence it intersects transversally $W^s_Y(\gamma_Y)$.
\end{proof}

\subsection{Cross sections, holonomies}
Let $Y$ be a vector field whose singularities $\rho$ are hyperbolic.
One associates to each of them their local stable and unstable manifold $W^s_{Y,loc}(\rho), W^s_{Y,loc}(\rho)$.

\begin{Definition}\label{Def:cross}
A \emph{cross section} of $Y$ is a co-dimension $1$ sub-manifold  $D\subset M\setminus \mathrm{Sing}(Y)$
such that there exists $\alpha>0$ and a compact subset $\Delta\subset D$
satisfying:
\begin{itemize}
\item[--] $\overline D\cap \Sing(X)=\emptyset$ and for each $x\in D$, the angle between $Y(x)$ and $T_xD$ is larger than $\alpha$,
\item[--] the interior of $\Delta$ in $D$ intersects any 
forward orbit in $M\setminus \cup_\rho W^s_{Y,loc}(\rho)$ and any backward orbit in $M\setminus \cup_\rho W^u_{Y,loc}(\rho)$,
\item[--] For each singularity $\rho$,
$D$ intersects $W^s_{Y,loc}(\rho)$ along a fundamental domain contained in the interior of $\Delta$:
the intersection is a one-codimensional sphere inside $W^s_{Y,loc}(\rho)$
which meets exactly once each orbit in $W^s_{Y,loc}(\rho)$;
similarly $D$ intersects $W^u_{Y,loc}(\rho)$ along a fundamental domain contained in the interior of $\Delta$.
\end{itemize}
Such a compact set $\Delta\subset D$ is called a \emph{core} of the cross section $D$.
\end{Definition}

\begin{figure}
\begin{center}
\includegraphics[width=5cm,angle=0]{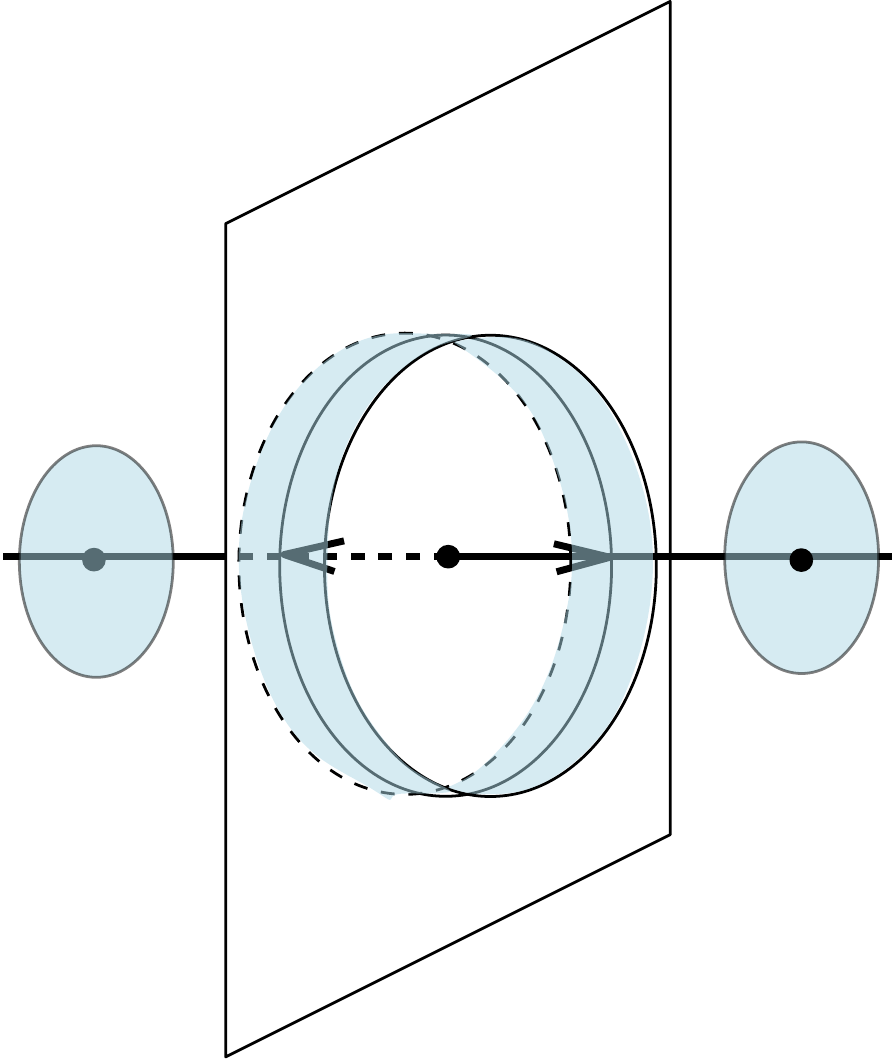}
\put(-75,70){\small $\rho$}
\put(-35,150){\small $W^s(\rho)$}
\end{center}
\caption{Construction of the cross-sections close to a singularity.\label{f.cross-section}}
\end{figure}

\begin{Lemma}\label{l.existence-cross-section}
There exists a cross section $D$ for $Y$.
\end{Lemma}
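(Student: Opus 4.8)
The plan is to construct the cross section $D$ as a disjoint union of pieces: one family of "flow boxes" near each singularity $\rho$, built so as to capture fundamental domains of $W^s_{Y,loc}(\rho)$ and $W^u_{Y,loc}(\rho)$, together with finitely many small transversal discs away from the singularities chosen so that their interiors meet every forward orbit that does not accumulate only on singularities (and symmetrically for backward orbits).

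First I would treat the singularities. Fix a hyperbolic singularity $\rho$ of $Y$ and work in a linearizing (or at least adapted) chart where $W^s_{Y,loc}(\rho)$ and $W^u_{Y,loc}(\rho)$ are coordinate subspaces. Choose two small spheres: one inside $W^s_{Y,loc}(\rho)$, obtained as the boundary of a small ball, which is a one-codimensional sphere in $W^s_{Y,loc}(\rho)$ meeting each stable orbit exactly once (a fundamental domain), and similarly one inside $W^u_{Y,loc}(\rho)$. Thicken each of these spheres transversally to $Y$: since $Y$ is transverse to $W^s_{Y,loc}(\rho)$ away from $\rho$ except along the flow direction, a short flow-saturation of a tubular neighborhood of the stable sphere produces a codimension-one submanifold $D_\rho^s$ transverse to $Y$, with a compact core $\Delta_\rho^s$ whose interior contains the stable fundamental domain; do the same for $D_\rho^u$. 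One must check the angle condition: by shrinking the spheres and the transversal thickening, the angle between $Y$ and $T D_\rho^{s,u}$ stays bounded below by some $\alpha_\rho>0$. This is essentially the content of Figure~\ref{f.cross-section}.

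Next I would handle the regular part. The complement $M\setminus\bigcup_\rho (W^s_{Y,loc}(\rho)\cup W^u_{Y,loc}(\rho))$ is where every forward orbit must hit the interior of the core; by the flow box theorem every regular point has a small transversal disc through it with bounded angle to $Y$. Take such a disc at each point of a compact set obtained by removing from $M$ small flow-neighborhoods of $\mathrm{Sing}(Y)$ already covered by the $D_\rho$'s; by compactness finitely many discs $D_1,\dots,D_k$ suffice so that the union of their interiors, together with the interiors of the cores $\Delta_\rho^{s,u}$, meets every forward orbit in $M\setminus\bigcup_\rho W^s_{Y,loc}(\rho)$ — because such an orbit either stays in the regular compact region, where it must cross one of the $D_i$, or enters a neighborhood of some $\rho$ along a non-stable direction, hence crosses the unstable fundamental-domain sphere and so passes through $\Delta_\rho^u$. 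The symmetric statement for backward orbits follows by the same argument with $W^u$ in place of $W^s$. Finally set $D$ to be the disjoint union of all these pieces (shrinking them slightly so they are pairwise disjoint and so that each remains a genuine embedded submanifold), $\Delta$ the union of the cores, and $\alpha=\min$ of the finitely many angle bounds.

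The main obstacle I expect is the verification that the interiors of the cores really intersect \emph{every} relevant orbit, and in particular the bookkeeping near the singularities: one must ensure that an orbit which passes close to $\rho$ but is not on $W^s_{Y,loc}(\rho)$ is genuinely pushed out along the unstable cone and therefore crosses the thickened unstable sphere $D_\rho^u$ in the interior of its core, uniformly. This uses hyperbolicity of $\rho$ (the $\lambda$-lemma / inclination behaviour near a hyperbolic fixed point) and a careful choice of the sizes of the stable and unstable spheres relative to the size of the linearizing neighborhood; the angle and disjointness conditions are then routine to arrange by further shrinking.
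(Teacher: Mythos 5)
Your plan is the same as the paper's in spirit: cover the regular part of the dynamics by finitely many small flow-box transversals and, near each singularity $\rho$, fatten a fundamental-domain sphere in $W^s_{loc}(\rho)$ (and in $W^u_{loc}(\rho)$) to a codimension-one transversal; then assemble $D$ from these pieces. Two points, however, deserve attention.

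The genuine gap is the disjointness step. You dispose of it with ``shrinking them slightly so they are pairwise disjoint,'' but shrinking two codimension-one pieces that genuinely overlap does not separate them: if $\Sigma_1\cap\Sigma_2\neq\emptyset$ and the intersection is in the interior of both, no amount of shrinking either piece removes the overlap without destroying the covering property. Since the definition requires $D$ to be an embedded codimension-one submanifold, and a transverse union of two such pieces is not a submanifold along the intersection locus, this has to be fixed. The paper's device is the right one: after a small perturbation making pairwise intersections transverse, for each intersection $N=\Sigma_1\cap\Sigma_2$ one removes small neighborhoods $U_1,U_2$ of $N$ in $\Sigma_1,\Sigma_2$, and reinserts their images $\varphi_{t_1}(U_1),\varphi_{t_2}(U_2)$ under small distinct flow times (so they remain transversals, stay in the same flow box, and no longer meet); finitely many such surgeries yield pairwise disjoint pieces whose union is an embedded submanifold and still covers what it must.

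A secondary wording issue: your construction of $D^s_\rho$ via ``a short flow-saturation of a tubular neighborhood of the stable sphere'' does not produce a codimension-one submanifold transverse to $Y$ (flow-saturation adds precisely the $Y$ direction, which you must avoid, and a tubular neighborhood of the sphere in $M$ is already full-dimensional). What you want is to extend the sphere $S^s_\rho$ by a complement to $T_yW^s_{loc}(\rho)$: choose $T_yD^s_\rho=T_yS^s_\rho\oplus N_y$ where $N_y$ complements $T_yW^s_{loc}(\rho)$ in $T_yM$. Since $Y(y)\in T_yW^s_{loc}(\rho)\setminus T_yS^s_\rho$, this $D^s_\rho$ is automatically transverse to $Y$ and meets $W^s_{loc}(\rho)$ only in $S^s_\rho$. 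Both defects are repairable, and the overall plan matches the paper, but as written the disjointness step is missing the actual argument.
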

\begin{proof}
We cover the set of regular orbits by finitely small transverse sections $\Sigma$
such that each local stable or unstable manifold of a singularity $\rho\in C(\sigma)$
meets only one of these sections, along a one-codimensional sphere.
The union of these small sections satisfies the definition, but is maybe not a submanifold.
One can always perturb them so that each intersection $\Sigma_1\cap \Sigma_2$ is transverse.
We modify them so that they become pairwise disjoint:
each time two small sections $\Sigma_1,\Sigma_2$ intersect along a
submanifold $N$, one removes some small neighborhoods $U_1,U_2$
of $N$ in each of them
and one adds two new small disjoint sections which are the images of $U_1,U_2$
by small times of the flow.

Let $\Delta$ be the union of the obtained transverse sections.
The section $D$ is built by enlarging slightly $\Delta$.
\end{proof}

We now fix the cross section $D$ for $Y$.
Note that it still satisfies the definition of cross section (with the same core $\Delta$)
for vector fields $Z$ that are $C^1$-close to $Y$.
We also consider an open set $U\subset M$.

\begin{Definition}\label{Def:holonomy}
A \emph{holonomy} for $(Y,D, U)$ is a
$C^1$-diffeomorphism $\pi\colon V\to \tilde V$
between open sets $V,\tilde V\subset D\cap U$
such that
there exists a continuous function $t\colon V\to (0,+\infty)$ satisfying
for each $x\in V$:
\begin{itemize}
\item $\pi(x)=\varphi_{t(x)}(x)\in \tilde V$,
\item $\varphi_s(x)\in U$ for any $s\in [0,t]$.
\end{itemize}
The time $t$ is called \emph{transition time} of $x$ for $\pi$. \emph{(We do not require $t$ to be the first return time to $\tilde V$.
)}

\end{Definition}

\paragraph{\it Construction and continuation of holonomies.} One can build a holonomy by considering two open subsets $V_0,\tilde V_0\subset D$,
an open set $W\subset U$ and an open interval $I$ in $(0,+\infty)$ satisfying:
\begin{description}
\item[(H)] For any piece of orbit $\{\varphi^Y_s(x), s\in [0,t]\}$ in $W$ with $t\in I$,
at most one point $\varphi^Y_s(x)$ with $s\in [0,t]\cap I$ belongs to $\tilde V_0$.
\end{description}
One then considers all the connected pieces of orbit contained in $W$
which meet both $V_0$ and $\tilde V_0$ at points $x\in V_0$ and $\varphi^Y_t(x)\in \tilde V_0$ for some $t\in I$:
the set $V$ (resp. $\tilde V$) is the set of the points $x$ (resp. $\varphi^Y_t(x)$) and we set $\pi(x)=\varphi_t(x)$. By property (H), the map $\pi$ is well defined;
since the flow is transverse to $D$, the sets $V$ and $\tilde V$ are open in $D$.
If the property (H) still holds for vector fields $Z$ that are $C^1$-close to $Y$, this construction defines also holonomies
$\pi^Y\colon V^Z\to \tilde V^Z$ for $Z$, called \emph{continuation of $\pi$}.
\medskip

The dynamics can be covered by a finite collection of holonomies admitting continuations. Since we may want to localize the dynamics, one considers
an attracting invariant compact set $\Lambda$ with an \emph{attracting neighborhood} $U$: there exists $t_0>0$ such that the orbit
$(\varphi_{s}(x))_{s>t_0}$ of any point $x\in U$ is contained in $U$ and accumulates
on a subset of $\Lambda$.

\begin{Proposition}\label{p.holonomy}
Let us consider for $Y$ a cross section $D$ with a core $\Delta$,
an attracting set $\Lambda$ with an attracting neighborhood $U$
and $T'>0$.
Then, there exist a finite collection of holonomies
$\pi_i^Y\colon V_i^Y\to \tilde V_i^Y$, $i=1,\dots,\ell$
for $(Y,D,U)$, a $C^1$-neighborhood $\cU$ of $Y$,
a neighborhood $O$ of $\Lambda\cap \Delta$ in $D$,
and $\varepsilon_0>0$ such that for each $Z\in \cU$, 
\begin{itemize}
\item[(i)] the holonomies admit continuations
$\pi_i^Z\colon V_i^Y\to \tilde V_i^Z$,
\item[(ii)] $O\setminus \cup_\rho W^s_{loc}(\rho_Z)\subset \cup_i V_i^Z$
and $\cup_i \tilde V_i^Z\subset \Delta$,
\item[(iii)] the transition time of each honolomy $\pi_i^Z$ is bounded from below by $T'$,
\item[(iv)] if a set $A\subset D\setminus \cup_\rho W^s_{loc}(\rho_Z)$
is contained in the $\varepsilon_0$-neighborhood of $\Lambda\cap \Delta$ and
has diameter smaller than $\varepsilon_0$, then $A$ is contained in a $V_i^Z$.
\end{itemize}


\end{Proposition}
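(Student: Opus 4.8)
This is a ``covering by holonomies'' statement, and I would prove it by a compactness argument based on the construction‑and‑continuation mechanism recalled just above. The plan is to attach to every point $x\in\Lambda\cap\Delta$ a holonomy $\pi_x\colon V_x\to\tilde V_x$ for $(Y,D,U)$ — possibly a finite composition of holonomies, which is again one — with transition time everywhere larger than $T'$, with $\tilde V_x\subset\Delta$, and admitting continuations for $Z$ near $Y$; here $V_x$ is an open neighbourhood of $x$ in $D$, except that when $x$ lies on some $W^s_{loc}(\rho)$ we only ask $V_x$ to be a punctured neighbourhood of $x$ inside $D\setminus W^s_{loc}(\rho)$. Granting this, compactness of $\Lambda\cap\Delta$ gives finitely many of these, say $\pi_1,\dots,\pi_\ell$, covering $\Lambda\cap\Delta$ — with, among them, one near‑singularity holonomy per singularity, covering a punctured neighbourhood of the sphere $S^s_\rho:=D\cap W^s_{loc}(\rho)$. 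One then takes $\cU$ a small $C^1$‑neighbourhood of $Y$ over which all the continuations exist and the condition (H) defining each elementary piece persists; a small enough neighbourhood $O$ of $\Lambda\cap\Delta$ in $D$; and $\varepsilon_0$ small enough (below the size of the punctured neighbourhoods of the $S^s_\rho$, and below a third of a Lebesgue number of $\{V_i\}_i$ on the compact part of $\overline O$ at definite distance from the $S^s_\rho$). Items (i), (iii) and $\bigcup_i\tilde V_i^Z\subset\Delta$ are then immediate from the construction, while $O\setminus\bigcup_\rho W^s_{loc}(\rho_Z)\subset\bigcup_i V_i^Z$ and (iv) follow, after shrinking $\cU$ and $O$, from the upper semicontinuity of $\Lambda$, the continuity of the $W^s_{loc}(\rho_Z)$, and the continuity of the continuations $V_i^Z$.

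For the construction of $\pi_x$ I follow the forward orbit of $x$. By the property of the core, as long as a forward orbit stays in $M\setminus\bigcup_\rho W^s_{loc}(\rho)$ it meets $\Int\Delta$; hence the forward orbit of $x$ either meets $\Int\Delta$ at an infinite sequence of times $0<t_1<t_2<\cdots$, or it meets $\Int\Delta$ finitely often and the last intersection lies on some $S^s_\rho$ (the only way the returns stop is that the orbit enters a local stable manifold, and $W^s_{loc}(\rho)$ meets $D$ exactly along $S^s_\rho$). In the first case (which includes every $x\notin\bigcup_\rho W^s(\rho)$) I pick $n$ with $t_n>T'$ and take $\pi_x$ to be the elementary holonomy along $\varphi^Y_{[0,t_n]}(x)$: one chooses a small $V_0\ni x$ in $D$, a small $\tilde V_0\ni\varphi^Y_{t_n}(x)$ in $\Int\Delta$, a thin tube $W\subset U$ around the orbit segment and a short interval $I$ around $t_n$; transversality of the flow to $D$ (angle bounded below) makes (H) hold provided $\tilde V_0$ and $I$ are small, and it persists for $Z$ $C^1$‑close to $Y$, which gives the continuations.

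The remaining points are those whose forward orbit enters a fixed small neighbourhood $\Omega_\rho\subset U$ of some singularity $\rho$, and here the crucial building block is a \emph{near‑singularity holonomy} $\pi_\rho$. Using the (robust) hyperbolicity at $\rho$: a point of the sheet of $D$ through $S^s_\rho$ lying at distance $d\in(0,\delta)$ from $S^s_\rho$ and off $W^s_{loc}(\rho)$ follows its orbit inside $\Omega_\rho$, leaves along $W^u_{loc}(\rho)$, and crosses the sheet of $D$ through $S^u_\rho:=D\cap W^u_{loc}(\rho)$, with transition time at least $\tfrac1\lambda\log\tfrac1\delta-C$ (with $\lambda$ bounding the unstable eigenvalues of $DY(\rho)$), so for $\delta$ small this exceeds $T'$. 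The resulting holonomy $\pi_\rho$ has for domain a punctured neighbourhood $V_\rho$ of $S^s_\rho$ in $D$, its image lies near $S^u_\rho$ hence inside $\Delta$ (recall $D$ meets $W^u_{loc}(\rho)$ along a fundamental domain contained in $\Int\Delta$), and it admits continuations by robustness of hyperbolicity. Then: if $x\in S^s_\rho$ one takes $\pi_x=\pi_\rho$; if $x$ belongs to $W^s(\rho)$ but its forward orbit reaches $S^s_\rho$ only after time $>T'$, one uses the elementary construction of the second paragraph along the orbit segment up to the $S^s_\rho$‑crossing; and if $x$ belongs to $W^s(\rho)$ and its forward orbit reaches $S^s_\rho$ within time $T'$, one composes an elementary holonomy carrying a neighbourhood of $x$ close to $S^s_\rho$ with $\pi_\rho$, working on $D\setminus W^s_{loc}(\rho_Z)$.

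The hard part is the analysis near the singularities — realising, robustly in $Z$, a full punctured neighbourhood of each $S^s_\rho$ as the domain of a single holonomy of transition time above the prescribed $T'$ with image landing inside the core $\Delta$; this is exactly where the hyperbolic structure at $\rho$ and the careful placement of the spheres $S^s_\rho,S^u_\rho$ in $\Int\Delta$ built into the definition of a cross section come in. By contrast, covering the compact ``regular'' part of $\Lambda\cap\Delta$ by finitely many elementary holonomies obtained from returns to $\Int\Delta$, composing them with each other and with the $\pi_\rho$ to push every transition time above $T'$, and then extracting the Lebesgue number of (iv), is a routine compactness argument; the only point needing attention is that each elementary holonomy entering a finite composition satisfies (H) robustly, so that the whole finite collection admits simultaneous continuations over one $\cU$.
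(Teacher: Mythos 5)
Your proposal is correct and follows essentially the same strategy as the paper: cover $\Lambda\cap\Delta$ by two kinds of holonomies — for each singularity $\rho$, a near-singularity holonomy $\pi_\rho$ whose domain is a punctured neighbourhood of $S^s_\rho=D\cap W^s_{loc}(\rho)$ and whose image lies near $S^u_\rho\subset\Delta$, and for regular points $x$, elementary holonomies built from a return of the orbit of $x$ to $\operatorname{Int}\Delta$ at some time $>T'$ — then extract a finite subcover by compactness of $\Lambda\cap\Delta$ and produce $\varepsilon_0$ by a Lebesgue-number argument. You are somewhat more careful than the paper's proof in two places: you make explicit the hyperbolic estimate $\tfrac1\lambda\log\tfrac1\delta - C$ for the transition time near $\rho$ (the paper merely notes that shrinking $W$ makes it exceed $T'$), and you explicitly compose an elementary holonomy with $\pi_\rho$ to handle the points of $W^s(\rho)\setminus W^s_{loc}(\rho)$ whose forward orbit reaches $S^s_\rho$ before time $T'$ — a borderline case which the paper's second Claim (which simply asserts the existence of a return to $\operatorname{Int}\Delta$ at a time $>T'$) passes over silently, implicitly relying on such points being absorbed into the neighbourhoods $O_\rho$.
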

\begin{proof}

The holonomies are built in two different ways depending if the points are close to $W^s_{loc}(\mathrm{Sing}(Y))$ or not.

\begin{Claim}
For any $\rho\in \Lambda\cap \mathrm{Sing}(Y)$,
there exist a neighborhood $O_\rho$ of $W^s_{loc}(\rho)\cap \Lambda\cap \Delta$ in $D$
and a holonomy $\pi$ admitting continuations for any $Z$ close to $Y$,
such that the domain $V^Z$ of $\pi^Z$ contains $O_\rho\setminus W^s_{loc}(\rho^Z)$,
the image $\tilde V^Z$ is contained in $\Delta$
and the transition time is bounded from below by $T'$.
\end{Claim}
\begin{proof}
By definition of the cross section,
the sets $\Delta^s=W^s_{loc}(\rho)\cap \Delta$ and
$\Delta^u=W^u_{loc}(\rho)\cap \Delta$ are compact fundamental domains in the local stable and unstable manifolds.
Let $V_0$ and $\tilde V_0$ be neighborhoods
of $W^s_{loc}(\rho)\cap \Lambda\cap \Delta$
and $W^u_{loc}(\rho)\cap \Lambda\cap \Delta$
in $D$ and let $W\subset U$ be a small neighborhood of $\Lambda\cap (W^s_{loc}(\rho)\cup W^u_{loc}(\rho))$.
A piece of orbit in $W$ intersects $\tilde V_0$ at most one, hence the property (H)
is satisfied for the time interval $I=(0,+\infty)$ and any $Z$ close to $Y$.
As a consequence, this defines a holonomy $\pi\colon V\to \tilde V$ which admits a continuation
for vector fields $Z$ that are $C^1$-close to $Y$.

For any $Z$ close to $Y$, the intersection $\Delta\cap W^s_{loc}(\rho_Z)$ is contained in a small neighborhood
$O_\rho$ of $\Delta^s$ in $D$ and $V^Z$ contains $O_\rho\setminus W^s_{loc}(\rho_Z)$. Moreover by definition of the cross sections,
$W^u_{loc}(\rho)\cap \Lambda\cap \Delta$ is contained in the interior of $\Delta$
in $D$, hence $\tilde V\subset \Delta$.
We may have chosen $W$ small enough so that the transition times
of the holonomies are larger than $T'$.
\end{proof}

\begin{Claim}
For each $x\in (\Lambda\cap \Delta)\setminus \cup_\rho W^s_{loc}(\rho)$,
there exist a neighborhood $O_x$ of $x$ in $D$ and a holonomy $\pi$ admitting continuations for any $Z$ close to $Y$,
such that
the domain $V^Z$ of $\pi^Z$ contains $O_x$, the image $\tilde V^Z$
is contained in $\Delta$ and the transition time is bounded from below by $T'$.
\end{Claim}
\begin{proof}
By definition of the cross sections,
there exists $t>T'$ such that $\varphi_t(x)$ belongs to the interior of $\Delta$.
We choose small open neighborhoods $V_0\subset D$ and $\tilde V_0\subset \Delta$
in $D$, a small neighborhood $W\subset U$ of the piece of orbit
$\{\varphi_s(x), s\in [0,t]\}\subset \Lambda$ and the time interval $(t-\delta,t+\delta)$ for $\delta>0$ small.
This defines a holonomy, admitting a continuation for $Z$ close to $Y$
with transition time bounded from below by $T'$.
%
%
\end{proof}

By compactness of $\Lambda\cap \Delta$, we can select from the holonomies provided by the previous claims
a finite number of them
such that the union $O$ of the open sets $O_\rho$ and $O_x$
contain $\Lambda\cap \Delta$.
These holonomies admit continuations for vector fields $Z$ in a $C^1$-neighborhood
$\cU$ of $Y$ and satisfy the items (i), (ii) and (iii).
There exists $\varepsilon_0$ such that if $A\subset D$ has diameter
smaller than $\varepsilon_0$ and is contained in the $\varepsilon_0$-neighborhood
of $\Lambda\cap \Delta$, then $A$ is included in some open set
$O_\rho$ or $O_x$; the item (iv) follows.
\end{proof}

As in Section~\ref{s.cones},
we consider a center-unstable cone field on an open neighborhood $U_1$
of a singular hyperbolic chain-recurrence class $C(\sigma)$.
\begin{Definition} Let $Z$ be a vector field $C^1$-close to $Y$.
A \emph{cu-section} of $Z$  is a submanifold $N\subset D$ with dimension $\dim(E^{cu})-1$
such that $T_xN\oplus Z(x)\subset \cC(x)$ for each $x\in N$.
\end{Definition}

The forward invariance of the cone field (see Lemma~\ref{Lem:cone-for-tangent}) implies:

\begin{Lemma}\label{l.invariance-section}
Let us consider open sets $V_0,\tilde V_0$ and $W\subset U_1$ satisfying (H) and
the associated holonomy $\pi$. Then the image by $\pi$ of a cu-section $N$ is still a cu-section.
\end{Lemma}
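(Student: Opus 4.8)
The plan is to show that the cone field $\cC$, which is forward invariant along orbit segments staying in $U_1$ (Lemma~\ref{Lem:cone-for-tangent}), is preserved by the holonomy map since the holonomy is realized by the flow restricted to $U$. The key point is to track how the tangent space $T_xN$ of the cu-section, together with the flow direction $Z(x)$, evolves under $D\varphi_{t(x)}^Z$, and to check both that the image stays tangent to $\cC$ and that the image is again a codimension-one submanifold of $D$ transverse to $Z$.

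First I would fix $x\in N$ and let $t=t(x)$ be its transition time, so $\pi(x)=\varphi^Z_t(x)$ and the orbit segment $\varphi^Z_{[0,t]}(x)$ is contained in $W\subset U_1$. Since $N$ is a cu-section, $T_xN\oplus Z(x)\subset \cC(x)$, and this is a $\dim(E^{cu})$-dimensional subspace contained in the cone. Applying $D\varphi^Z_t$ and using that the flow carries $Z(x)$ to $Z(\varphi^Z_t(x))$, together with Lemma~\ref{Lem:cone-for-tangent} (applied with the transition time $t$, which we may assume exceeds the threshold $T$ by taking $T'\ge T$ in the relevant constructions, or by iterating the inclusion $D\varphi^Z_t(\cC(x))\subset\cC(\varphi^Z_t(x))$ established for $t\ge T$), we get
$$D\varphi^Z_t\big(T_xN\oplus Z(x)\big)\subset \cC(\varphi^Z_t(x)).$$
Now $D\varphi^Z_t(T_xN\oplus Z(x)) = D\varphi^Z_t(T_xN)\oplus Z(\pi(x))$. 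Since $\pi$ is a $C^1$-diffeomorphism onto its image with $\pi(x)=\varphi^Z_t(x)$ and $\pi = \varphi^Z_{t(\cdot)}(\cdot)$ restricted to $N$, the tangent space $T_{\pi(x)}\pi(N)$ is obtained from $D\varphi^Z_t(T_xN)$ by adding the contribution of the variation of the transition time, which lies along $Z(\pi(x))$; hence $T_{\pi(x)}\pi(N)\oplus Z(\pi(x)) = D\varphi^Z_t(T_xN)\oplus Z(\pi(x)) = D\varphi^Z_t(T_xN\oplus Z(x))\subset \cC(\pi(x))$. This shows $\pi(N)$ is a cu-section, provided $\pi(N)\subset D$ (which holds since $\tilde V\subset D$) and $\pi(N)$ has the right dimension (immediate since $\pi$ is a diffeomorphism).

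The one subtlety, and the step I expect to require the most care, is the interplay between the threshold time $T$ in Lemma~\ref{Lem:cone-for-tangent} and the transition time of a general holonomy, which Definition~\ref{Def:holonomy} does not require to be large. This is handled by noting that the cone inclusion $D\varphi^Z_s(\cC(x))\subset\cC(\varphi^Z_s(x))$, once valid for $s\ge T$, propagates forward: if $s\ge T$ then for any $s'\ge 0$ with the orbit staying in $U_1$, $D\varphi^Z_{s+s'}(\cC(x))=D\varphi^Z_{s'}(D\varphi^Z_s(\cC(x)))\subset D\varphi^Z_{s'}(\cC(\varphi^Z_s(x)))$, and a continuity/compactness argument for $s'$ in bounded sets keeps the image inside the slightly wider cone; alternatively, in the applications the relevant $T'$ is always chosen at least $T$. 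For short transition times one simply observes that cu-sections are by definition already tangent (together with $Z$) to $\cC$, and the cone field only needs to contain the image, so a standard estimate using domination over the bounded time interval $[0,T]$ and the continuity of $D\varphi^Z_s(x)$ in $(x,s,Z)$ gives the inclusion after reducing $\cU$ and $U_1$ if necessary. Thus the lemma follows.
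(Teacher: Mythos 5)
Your argument correctly reproduces what the paper leaves implicit: the identity $T_{\pi(x)}\pi(N)\oplus\RR Z(\pi(x)) = D\varphi^Z_{t(x)}\big(T_xN\oplus\RR Z(x)\big)$, which holds because the derivative of the holonomy differs from $D\varphi^Z_{t(x)}|_{T_xN}$ only by a term along $Z(\pi(x))$, combined with the forward invariance of $\cC$ from Lemma~\ref{Lem:cone-for-tangent}. You are also right to notice the tension between the threshold $T$ in Lemma~\ref{Lem:cone-for-tangent} and the absence of any lower bound on transition times in Definition~\ref{Def:holonomy}: as stated, the lemma is only clearly justified when the transition time is at least $T$, which is in fact the case in every application (the holonomies used all come from Proposition~\ref{p.holonomy} with $T'\geq T$ chosen via Lemma~\ref{l.expanding}).

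Where your write-up goes astray is the proposed repair for short transition times. Reducing $\cU$ and $U_1$ cannot make $\cC=\cC_\alpha$ forward invariant for $0<t<T$: even over the limit set $C(\sigma)$ itself, the definition of singular hyperbolicity only controls $\|D\varphi_T|_{E^{ss}}\|$ at the time scale $T$, and $E^{ss}$ may temporarily expand for small $t$, pushing vectors out of the cone around $\widetilde E^{cu}$. Shrinking the neighborhood in space or in the $C^1$ topology does not change $\varphi^X_t$ restricted to $C(\sigma)$, so this obstruction is not removed by such reductions. The two legitimate fixes are the ones you mention only in passing: either restrict the lemma to holonomies whose transition times are at least $T$ (which is how it is actually invoked), or replace the metric by an adapted one before building the cone field so that the cone inclusion of Lemma~\ref{Lem:cone-for-tangent} holds for all $t\geq 0$. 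Your ``alternatively, $T'\geq T$'' aside is the correct reading of the paper; the continuity-after-reducing argument is not.
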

\medskip
The minimal norm of a linear map $A$ between two euclidean spaces is defined by
$m(A):=\inf\{\|Av\|: \;\text{unit vector $v$}\}$.

\begin{Definition} A holonomy $\pi\colon V\to \tilde V$ is \emph{$10$-expanding} if there exists $\chi>10$
such that for any cu-section $N\subset V$, the derivative $D\pi|_N$ has minimal norm larger than $\chi$
with respect to the induced metrics on $N$ and $\pi(N)$.
\end{Definition}

The definition of the singular hyperbolicity (in particular condition~\eqref{e.cu-expansion})
and the uniform transversality between $D$ and the vector field imply:
\begin{Lemma}\label{l.expanding}
Let us consider a cross section $D$ and an open neighborhood $U_1$  of
a singular hyperbolic chain-recurrence class $C(\sigma)$ as in Section~\ref{s.cones}.
There exists $T'>0$ such that any holonomy $\pi\colon V\to\widetilde V$
for $(Y,D,U_1)$, whose transition times are bounded below by $T'$, is $10$-expanding.
\end{Lemma}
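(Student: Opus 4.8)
The plan is to deduce the $10$-expansion of a holonomy $\pi\colon V\to\widetilde V$ directly from the area-expansion property~\eqref{e.cu-expansion} of the singular hyperbolic splitting together with the uniform transversality of the flow to the cross section $D$. First I would fix the constants: let $\alpha>0$ be the cone width chosen in Section~\ref{s.cones}, let $T$ and $U_1$ be as in Lemma~\ref{Lem:cone-for-tangent} (so that $\cC=\cC_\alpha$ is forward invariant along orbit segments of length $\ge T$ staying in $U_1$), and let the transverse angle between $Y$ and $D$ along $D$ be bounded below by some $\alpha_0>0$. I would then show that there is a constant $\lambda>1$, depending only on $\alpha,\alpha_0$ and the geometry of $\cC$, such that for any orbit segment $\varphi^Y_{[0,t]}(x)\subset U_1$ with $t\ge T$ and any cu-section $N$ through $x$, the map $\pi$ defined by this transition satisfies $m(D\pi|_N)\ge c\,|\det D\varphi^Y_t|_{P}|^{1/(\dim E^{cu}-1)}$ for the plane $P=T_xN\oplus\RR.Y(x)\subset\cC(x)$, where $c>0$ is a uniform constant absorbing the transversality defects at entrance and exit. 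Finally, choosing $T'\ge T$ large enough that the singular hyperbolicity forces $|\det D\varphi^Y_{t}|_P|$ to be as large as we want (by iterating~\eqref{e.cu-expansion} over $\lfloor t/T\rfloor$ blocks, $|\det D\varphi^Y_t|_P|\ge 2^{\lfloor t/T\rfloor}$), we get $m(D\pi|_N)>\chi$ for some $\chi>10$, uniformly over all cu-sections $N\subset V$.

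The key steps, in order, are as follows. (1) Identify the two-plane: for $x\in N$, the plane $P_x:=T_xN+\RR.Y(x)$ is two-dimensional (since $T_xN$ is $(\dim E^{cu}-1)$-dimensional and contained in $\cC(x)$, while $Y(x)\in\cC(x)$ is transverse to $D\supset N$) — wait, more precisely $\dim P_x=\dim E^{cu}$, so I should instead compare the Jacobian of $\pi$ along $N$ to the full $\dim(E^{cu})$-dimensional Jacobian of $D\varphi^Y_t$ restricted to $T_xN\oplus\RR.Y(x)$. (2) Relate these two Jacobians: since $\pi=\varphi^Y_{t(\cdot)}$ followed by projection along the flow back into $D$, the chain rule gives $D\pi|_{T_xN}$ as $D\varphi^Y_t|_{T_xN}$ post-composed with the projection $T_{\pi(x)}M\to T_{\pi(x)}D$ along $\RR.Y(\pi(x))$; comparing determinants of $D\varphi^Y_t$ on the $\dim(E^{cu})$-plane $T_xN\oplus\RR.Y(x)$ with the Jacobian of $D\pi$ on the $(\dim E^{cu}-1)$-plane $T_xN$ produces a bounded multiplicative error governed by $\|Y(x)\|$, $\|Y(\pi(x))\|$ (both bounded above and below on $\overline D$) and the transverse angles (bounded below by $\alpha_0$). (3) Lower bound the $\dim(E^{cu})$-dimensional Jacobian: by~\eqref{e.cu-expansion} applied over the blocks of length $T$, $|\det D\varphi^Y_t|_{T_xN\oplus\RR.Y(x)}|\ge 2^{\lfloor t/T\rfloor}$ once $t\ge T'$. (4) Pass from the Jacobian (product of singular values) to the minimal norm $m(D\pi|_N)$: here I use that $D\pi|_N$ preserves the cone structure (Lemma~\ref{l.invariance-section}), so the image cu-section has tangent directions again in $\cC$; this bounded geometry forces the singular values of $D\pi|_N$ to be comparable to each other, so a large product implies a large minimum. (5) Conclude $m(D\pi|_N)>\chi>10$ by taking $T'$ large.

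The main obstacle is step (4): passing from area/volume expansion to expansion of the minimal norm. Area-expansion of $E^{cu}$ by itself does not prevent one direction from being strongly contracted while another is very strongly expanded, so a priori $m(D\pi|_N)$ could be small even though $\det D\pi|_N$ is large. The resolution must use the cone invariance: because $\pi$ maps cu-sections to cu-sections with a uniform cone width $\alpha$, and the metric distortion along the cross section is uniformly bounded, the restricted derivative $D\pi|_N$ cannot distort the round sphere too much — equivalently, the smallest and largest singular values of $D\pi|_N$ differ by at most a fixed factor $K=K(\alpha,\alpha_0)$. Once that uniform quasi-conformality along cu-sections is in hand, $m(D\pi|_N)\ge K^{-1}\big(\det D\pi|_N\big)^{1/(\dim E^{cu}-1)}\ge K^{-1}\big(c\,2^{\lfloor t/T\rfloor}\big)^{1/(\dim E^{cu}-1)}$, and choosing $T'$ with $\lfloor T'/T\rfloor$ large enough gives the desired $\chi>10$. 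I would also remark that the whole argument is stated for $Y$ but, since all the ingredients (cone invariance Lemma~\ref{Lem:cone-for-tangent}, the determinant bound~\eqref{e.cu-expansion}, transversality to $D$) are open conditions, $T'$ and $\chi$ can be chosen uniformly for all vector fields $Z$ in a $C^1$-neighborhood of $Y$, which is what the later use of the lemma requires.
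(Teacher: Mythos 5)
Your step (4) contains a genuine gap, and unfortunately it sits at exactly the point you flagged as the main obstacle. You claim that because $\pi$ maps cu-sections to cu-sections tangent to the fixed cone field $\cC$, the restricted derivative $D\pi|_N$ must be uniformly quasi-conformal, i.e.\ that the ratio of its largest to smallest singular value is bounded by some $K(\alpha,\alpha_0)$. This is not a consequence of cone invariance. A linear map of a subspace that lies inside $\cC$ onto another subspace inside $\cC$ can have arbitrarily large singular-value ratio; tangency to a fixed cone is a constraint on the \emph{position} of the subspace, not on the \emph{distortion} of the map inside it. (Concretely: when $\dim E^{cu}=3$ the restricted map is $2\times 2$, and $\mathrm{diag}(1000,1/10)$ happily maps a cone to a cone.) Without quasi-conformality, a large $(\dim E^{cu}-1)$-dimensional Jacobian of $D\pi|_N$ does not give a lower bound on $m(D\pi|_N)$, so steps (3)--(4) do not close.

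The correct route avoids the high-dimensional Jacobian entirely and applies~\eqref{e.cu-expansion} to \emph{two}-planes containing the flow direction, which is where the estimate actually lives. Fix $v\in T_xN$ with $\|v\|=1$ and set $P=\mathrm{span}(v,Y(x))\subset\cC(x)$. If $t$ is the transition time and $t\ge T'$, then~\eqref{e.cu-expansion} iterated over blocks of length $T$ gives
$$
\bigl|\det\bigl(D\varphi^Y_t|_P\bigr)\bigr|
=\frac{\|D\varphi^Y_t v\|\cdot\|Y(\varphi^Y_t x)\|\cdot\sin\theta'}{\|v\|\cdot\|Y(x)\|\cdot\sin\theta}
\;\ge\; 2^{\lfloor t/T\rfloor},
$$
where $\theta$ (resp.\ $\theta'$) is the angle between $v$ and $Y(x)$ (resp.\ between $D\varphi^Y_t v$ and $Y(\varphi^Y_t x)$). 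Now three uniform bounds enter, all coming from the fact that the endpoints $x$ and $\pi(x)$ lie on the compact cross section $D$ which is disjoint from $\Sing(Y)$ and uniformly transverse to $Y$: the ratio $\|Y(x)\|/\|Y(\varphi^Y_t x)\|$ is bounded above and below, $\sin\theta$ is bounded below since $v\in T_xD$ and $Y(x)$ is transverse to $D$, and $\|D\pi v\|$ is comparable to $\|D\varphi^Y_t v\|\sin\theta'$ (this is exactly the norm of the projection of $D\varphi^Y_t v$ onto $T_{\pi(x)}D$ along $\RR.Y(\pi(x))$, and the projection has uniformly bounded norm and co-norm by transversality). Putting these together yields $\|D\pi v\|\ge c\cdot 2^{\lfloor t/T\rfloor}$ for a uniform $c>0$, so taking $T'$ with $c\cdot 2^{\lfloor T'/T\rfloor}>10$ gives $m(D\pi|_N)>\chi$ for some $\chi>10$. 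This is precisely the combination of~\eqref{e.cu-expansion} and ``uniform transversality between $D$ and the vector field'' the paper invokes. Your steps (1) and (2) are sound, and your closing remark about robustness in $Z$ near $Y$ is correct, but the estimate must be carried out plane by plane (pairing each $v$ with the flow direction), not through a volume--plus--quasi-conformality argument.
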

\medskip

\subsection{Proof of the Proposition~\ref{Prop:stable-dense}}
Let us consider the neighborhoods $U_1$ of $C(\sigma_X)$,
$\cU_1$ of $X$ and the cone field $\cC$ satisfying condition~\eqref{e.cu-expansion}.
Let us consider a cross section $D$ for $X$ with a core $\Delta\subset D$ (as given by Lemma~\ref{l.existence-cross-section}).
One can always replace $\cC$ and $U_1$ by forward iterates under $\varphi^X$, so that
$\cC$ is arbitrarily close to the bundle $E^{cu}$ on $C(\sigma_X)$.

\begin{Claim}
(Up to replace $\cC$ and $U_1$ by forward iterates), for each singularity $\rho$ one can assume that
$\cC$ is transverse to
$D\cap W^s_{loc}(\rho)$ at points of $C(\sigma)$.
\end{Claim}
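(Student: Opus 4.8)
The plan is to establish transversality of the center-unstable cone field $\cC$ to the local stable manifold $W^s_{loc}(\rho)$ at points of $C(\sigma)$, by pushing forward and exploiting the dominated splitting at the singularity. Recall from Item~\ref{i.strong-stable-outside-Y} of Proposition~\ref{Pro:strong-stable-Y} that $E^s(\rho)$ admits a dominated splitting $E^s(\rho)=E^{ss}(\rho)\oplus E^c(\rho)$ with $\dim E^c=1$, and that $W^{ss}_{loc}(\rho)\cap C(\sigma)=\{\rho\}$. The key structural fact is that the tangent space to $W^s_{loc}(\rho)$ is $E^s(\rho)=E^{ss}(\rho)\oplus E^c(\rho)$, so its intersection with $E^{cu}$ is at most one-dimensional (namely $E^c(\rho)$), while $\cC$ is a cone around $E^{cu}$ of dimension $\dim(E^{cu})$. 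A cone of dimension $d$ can fail to be transverse to a codimension-$d'$ subspace precisely when the subspace contains a subspace complementary in a bad way; here the obstruction to transversality is exactly that $D\cap W^s_{loc}(\rho)$ might be tangent to too much of $E^{ss}$.

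First I would make precise what must be avoided: at a point $x\in D\cap W^s_{loc}(\rho)\cap C(\sigma)$, non-transversality of $\cC(x)$ to $T_x(D\cap W^s_{loc}(\rho))$ would force $T_x(D\cap W^s_{loc}(\rho))$ to contain a subspace of $E^{ss}$ of dimension $\dim(E^{ss})$, i.e. essentially all of $E^{ss}$, since $\cC$ is a thin cone around $E^{cu}$ and $T_xD$ has codimension one. Then I would use the dynamics: such a point $x$ lies in $W^s(\rho)\cap C(\sigma)$, and by Proposition~\ref{Pro:strong-stable-Y} we know $X(x)\in E^{cu}(x)$ and $x\notin W^{ss}(\rho)$, so the backward orbit of $x$ approaches $\rho$ along a direction transverse to $E^{ss}$—more precisely, along the center direction $E^c(\rho)$. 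Because $E^{ss}$ is dominated by $E^c$ inside $W^s_{loc}(\rho)$, backward iteration contracts the $E^c$-component relative to the $E^{ss}$-component, but we are iterating forward here: forward iteration of a tangent plane in $\cC_\alpha(x)$ stays in a thinner cone $\cC_\beta$ (Lemma~\ref{Lem:cone-for-tangent}), and—crucially—the dominated splitting structure of $E^s(\rho)$ means that the strong stable foliation inside $W^s_{loc}(\rho)$ is contracted faster than the center, so a generic cu-plane becomes transverse after finitely many forward iterates.

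The concrete mechanism: replace $\cC$ and $U_1$ by their images under $\varphi^X_t$ for large $t$. The local stable manifold $W^s_{loc}(\rho)$ is (backward) invariant, so $D\cap W^s_{loc}(\rho)$ and the flow near it are unchanged in essence; but the cone $\cC$ gets pushed forward. Since near $\rho$ the return dynamics along $W^s_{loc}(\rho)$ contracts $E^{ss}$ strictly more than $E^c$, if $\cC$ were tangent to a near-$E^{ss}$ subspace of $W^s_{loc}(\rho)$ at some point, pushing forward would rotate that subspace toward the center direction $E^c$, hence out of a large $E^{ss}$-subspace, restoring transversality. A compactness argument over the compact fundamental domain $\Delta^s=W^s_{loc}(\rho)\cap \Delta$ (there are finitely many singularities, each contributing a compact piece) shows a uniform finite time $t$ suffices for all of them simultaneously. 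After this replacement, $\cC$ is transverse to $D\cap W^s_{loc}(\rho)$ at every point of $C(\sigma)\cap D\cap W^s_{loc}(\rho)$, and—since $\cC$ was already arbitrarily close to $E^{cu}$ by the earlier reduction—the transversality is quantitative and thus persists on a neighborhood.

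The main obstacle I anticipate is the bookkeeping of \emph{which} cone and which neighborhood survives the successive replacements: we already replaced $\cC$ and $U_1$ once to make $\cC$ close to $E^{cu}$, and we must check this new replacement does not spoil that (it does not, since pushing forward only makes $\cC$ closer to the invariant bundle $E^{cu}$), nor spoil condition~\eqref{e.cu-expansion} (invariance of cone fields and the expansion estimate are preserved under forward iteration). A secondary subtlety is that $D$ is not flow-invariant, so "pushing forward $\cC$" must be interpreted pointwise over $U_1$ via $D\varphi^X_t$, and one must verify the pushed-forward cone still makes sense as a continuous cone field on a (possibly smaller) neighborhood of $C(\sigma)$; this follows from continuity of $D\varphi^X_t$ and compactness, exactly as in the proof of Lemma~\ref{Lem:cone-for-tangent}. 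Once transversality is secured at points of $C(\sigma)$, openness of the transversality condition and upper semicontinuity of $C(\sigma_Z)$ extend it to a $C^1$-neighborhood, which is what the subsequent arguments require.
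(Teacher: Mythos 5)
Your general plan — replace $\cC$ and $U_1$ by forward iterates so the cone becomes thin around $E^{cu}$, and combine this with the structure of $E^s(\rho)$ near the singularity — is aimed in the right direction, but there is a genuine gap, and two of your steps are logically inverted.

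First, the characterization of non-transversality is backwards. Write $N:=T_x(D\cap W^s_{loc}(\rho))$, a subspace of dimension $\dim E^{ss}$. Since $\cC$ is a cone around $E^{cu}$, non-transversality of $\cC(x)$ to $N$ means $\cC(x)\cap N\neq\{0\}$, i.e.\ that $N$ contains a vector \emph{close to $E^{cu}(x)$}. It does not mean $N$ contains a large piece of $E^{ss}(x)$; on the contrary, if $N$ were contained in $E^{ss}(x)$, a thin cone around $E^{cu}$ would automatically be transverse to it. So the obstruction you set out to rule out is the wrong one.

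Second, the ``rotation'' mechanism you invoke does not act here. The cross section $D$, hence $N$, is \emph{fixed}: only the cone field is replaced by its forward image, and the only effect of that replacement is to shrink $\cC$ toward the invariant bundle $E^{cu}$. If $E^{cu}(x)$ itself were not transverse to $N$, no amount of thinning could restore transversality, since the thinner cones still contain $E^{cu}(x)$. The dominated splitting $E^{ss}(\rho)\oplus E^c(\rho)$ of $E^s(\rho)$ plays no direct role in rotating anything, because you are not iterating $N$.

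The step that is actually needed — and is the heart of the paper's proof — is to show that $E^{cu}(x)$ itself is transverse to $N$ at regular points $x\in D\cap W^s_{loc}(\rho)\cap C(\sigma)$. You observe, essentially, that $E^{cu}(\rho)\cap E^s(\rho)=E^c(\rho)$ is one-dimensional, but you do not carry this to regular points, and you never use the crucial hypothesis that $D$ is transverse to the flow. The clean argument is: since $X(x)\in E^{cu}(x)$ (Item~\ref{i.flow-direction-Y} of Proposition~\ref{Pro:strong-stable-Y}) and $X(x)\in T_xW^s_{loc}(\rho)$, and by dimension count, one gets $E^{cu}(x)\cap T_xW^s_{loc}(\rho)=\RR X(x)$. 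Because $D$ is a cross section, $X(x)\notin T_xD$, so $E^{cu}(x)\cap N\subset\RR X(x)\cap T_xD=\{0\}$. Transversality of $E^{cu}(x)$ to $N$ follows by the complementary dimension count, and then (and only then) does the thinning of $\cC$ via forward iteration yield the claim. Without this step your argument does not close.
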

\begin{proof}
For each singularity $\rho$, the intersection $D\cap W^s_{loc}(\rho)$ is a one-codimensional
sphere in $W^s_{loc}(\rho)$ transverse to $X$.
Since $E^{cu}\cap TW^s_{loc}(\rho)= \RR X$ at regular points of $C(\sigma)$,
one deduces that at points of $D\cap W^s_{loc}(\rho)\cap C(\sigma)$, the submanifold
$D\cap W^s_{loc}(\rho)$ is transverse to $E^{cu}$, hence to $\cC$.
\end{proof}

Since $C(\sigma_X)$ is an attractor,
it admits a neighborhood $U\subset U_1$ which is attracting
such that $\Lambda:=C(\sigma_X)$ is the maximal invariant set in $U$.
By the claim above, up to reduce the neighborhoods $U$ and $\cU_1$, one can require
that for any $Y\in \cU_1$ the cone field $\cC$ is transverse to
$D\cap W^s_{loc}(\rho_Y)\cap U$ for each singularity $\rho_Y\in U$ of $Y$.
In particular, there exists $\varepsilon_1>0$ such that
any cu-section $N$ of $Y\in \cU_2$ with diameter smaller than $\varepsilon_1$
can intersect $W^s_{loc}(\rho_Y)$ in at most one point.

Let $T'>0$ be given by Lemma~\ref{l.expanding}.
By Proposition~\ref{p.holonomy}, there exists a finite collection
of holonomies $\pi_i^Y\colon V_i^Y\to \tilde V_i^Y$, $i=1,\dots,\ell$,
defined for any vector field $Y$ in a neighborhood $\cU_2\subset \cU_1$ of $X$
and whose transition times are bounded from below by $T'$.
The union $\cup_i V_i^Y$ covers a uniform neighborhood $O$ of $C(\sigma_X)\cap \Delta$.
Since $C(\sigma_X)$ is an attractor for $\varphi^X$ and by our choice of the cross section $D$ and of $\Delta$,
one can reduce the neighborhood $\cU_2$ of $X$ and choose a neighborhood
$U_2\subset U$ of $C(\sigma_X)$ such that
for any $Y\in \cU_2$ and any $x\in U_2$:
\begin{itemize}
\item[--] $x$ belongs to the stable manifold of a singularity $\rho_Y$ or has a forward iterate
by $\varphi^Y$ which belongs to the interior of $\Delta$ in $D$,
\item[--] the forward orbit of $x$ is contained in $U$.
\end{itemize}
Moreover there exists $\varepsilon_0$ satisfying the item (iv) of Proposition~\ref{p.holonomy}.
We can always reduce $\varepsilon_0$ to be smaller than $\varepsilon_1$.

The angle between the cross section $D$ and the vector field is bounded away from zero by $\alpha$.
Hence if $\varepsilon_0$ is reduced enough and $\varepsilon>0$ is chosen small, then
for any cu-section $N\subset \Delta$ with inner radius $\varepsilon_0$ and diameter less than $3\varepsilon_0$,
the set $\cup\{\varphi_s^Y(N):\; |s|\leq \varepsilon_0\}$ is a submanifold with inner diameter larger than $\varepsilon$.
\medskip

Let us consider $Y\in \cU_2$ and $\Gamma$ be a submanifold of dimension $\dim(E^{cu})$ as in the statement of Proposition~\ref{Prop:stable-dense}.
Note that $C(\sigma_Y)$ is not a sink (it contains $\gamma_Y$), hence $\sigma_Y$ has a non-trivial unstable space.
From the Item~\ref{i.strong-stable-outside-Y} of Proposition~\ref{Pro:strong-stable-Y}, this implies that the bundle $E^{cu}$ has dimension at least $2$.
From the area expansion~\eqref{e.cu-expansion}, the stable manifolds of the singularities are meager in $\Gamma$.
By definition of the cross section and of its core $\Delta$,
one can thus find $x\in \Gamma$ having a forward iterate $\varphi^Y_{t_0}(x)$ in $\Delta\setminus \cup_\rho W^s(\rho_Y)$.
By construction and forward invariance the set $\Gamma':=\cup_{|t-t_0|<\delta}\; \varphi_t^Y(\Gamma)$ is still a submanifold tangent to $\cC$ if $\delta>0$ is small enough. Since $Y(z)\in T_z\Gamma'$,
the intersection $N:=\Gamma'\cap D$ is a cu-section.
See Figure~\ref{f.singularity}

\begin{figure}
\begin{center}
\includegraphics[width=8cm,angle=0]{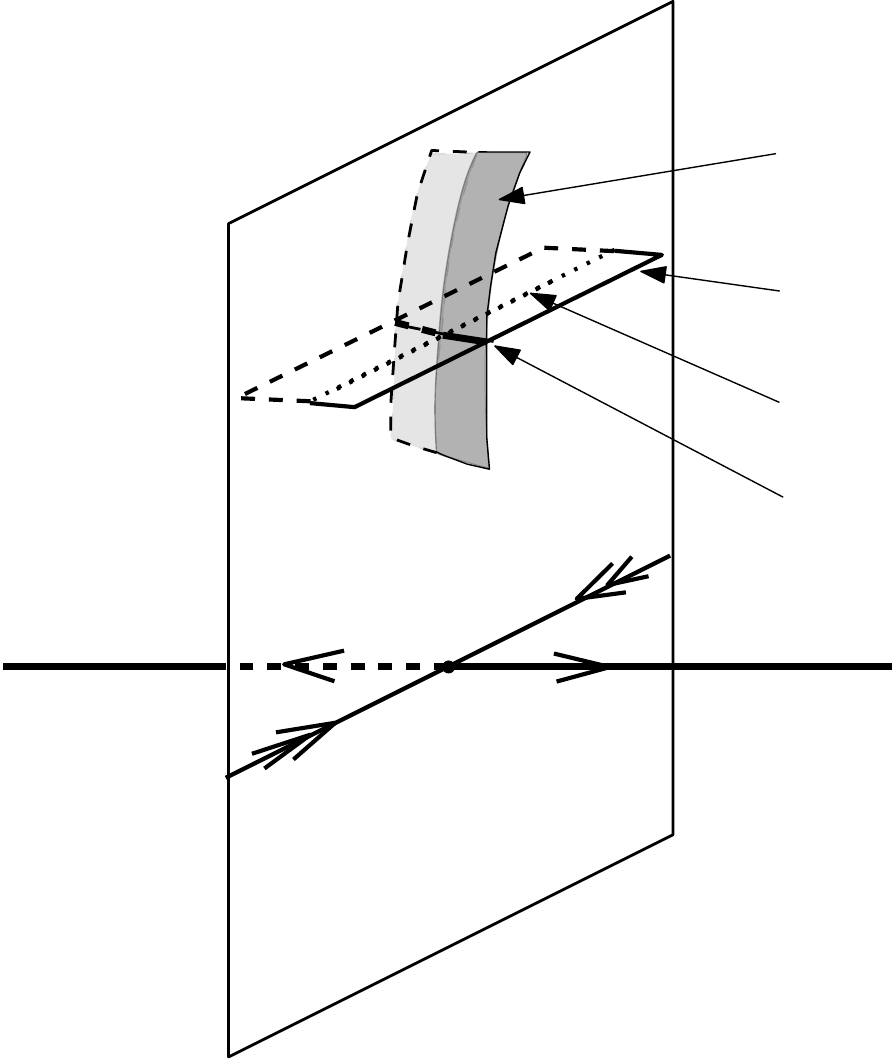}
\put(-110,90){$\rho$}
\put(-140,35){$W^s_{loc}(\rho)$}
\put(-30,227){$\Gamma$}
\put(-30,190){$D$}
\put(-30,137){$N$}
\put(-30,165){$D\cap W^s_{loc}(\rho)$}
\end{center}
\caption{Construction of the cu-section $N$.\label{f.singularity}}
\end{figure}

We inductively build a sequence of cu-sections $N_n\subset D\setminus \cup W^s_{loc}(\rho_Y)$
with diameter smaller than $\varepsilon_0$ and contained in the orbit $\cup_{t>0}\varphi^Y_t(N)$ of $N$. We denote by $r_n$
their inner radius.
As explained in Section~\ref{s.cones}, we can reduce $N_n$ without reducing the inner radius
in a such a way that the diameter of $N_n$ is smaller than $3r_n$.

Since by definition a cu-section is transverse to $W^s(\rho_Y)$,
one can choose $N_0\subset N\setminus W^s_{loc}(\rho_Y)$.
The cu-section $N_n$ is then built inductively from $N_{n-1}$ by:
\begin{itemize}
\item[a-] Choosing a domain $V_i^Y$ intersecting $N_{n-1}$:
if $N_{n-1}$ is disjoint from $W^s_{loc}(\rho_Y)$ for any singularity $\rho_Y$,
then by Item (iv) of Proposition~\ref{p.holonomy}, there exists a domain $V_i^Y$ which contains $N_{n-1}$;
otherwise, there exists a singularity $\rho_Y$ such that $N_{n-1}\setminus W^s_{loc}(\rho_Y)$
is contained in a domain $V_i^Y$.

\begin{Claim}
The inner diameter of $N_{n-1}\cap V_i^Y$
is larger than $r_{n-1}/3$.
\end{Claim}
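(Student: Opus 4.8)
The plan is to argue along the two alternatives by which the domain $V_i^Y$ was selected. In the first alternative $V_i^Y$ contains all of $N_{n-1}$, so $N_{n-1}\cap V_i^Y=N_{n-1}$; by construction this set has inner radius $r_{n-1}$, and since a cu-section is nearly flat (tangent to a thin cone field over a compact region) its inner diameter is comparable to $r_{n-1}$, in particular larger than $r_{n-1}/3$. In the second alternative we are given a singularity $\rho_Y$ with $N_{n-1}\setminus W^s_{loc}(\rho_Y)\subset V_i^Y$. Here I would first observe that every domain $V_j^Y$ is disjoint from every $W^s_{loc}(\rho'_Y)$: in the proof of Proposition~\ref{p.holonomy} the sets $V_j^Y$ consist of starting points of orbit pieces that reach the image cross-sections $\tilde V_j^Y$, whereas a forward orbit issued from $W^s_{loc}(\rho'_Y)$ converges to $\rho'_Y$ and never meets a cross-section. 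Hence $N_{n-1}\cap V_i^Y=N_{n-1}\setminus\bigcup_\rho W^s_{loc}(\rho_Y)$, and the whole claim reduces to showing that this is $N_{n-1}$ with a single point removed, plus a soft metric estimate.

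For the single-point step I would invoke the transversality arranged earlier in this subsection: after replacing $\cC$ and $U_1$ by forward iterates, $\cC$ is transverse, along $C(\sigma_Y)\cap U$, to each fundamental domain $D\cap W^s_{loc}(\rho_Y)$. Since $N_{n-1}$ is a cu-section, $T_xN_{n-1}\subset\cC(x)$, and a dimension count inside $D$ — using $\dim N_{n-1}=\dim E^{cu}-1$, $\dim(D\cap W^s_{loc}(\rho_Y))=\dim E^{ss}$ (from item~\ref{i.strong-stable-outside-Y} of Proposition~\ref{Pro:strong-stable-Y}, the local stable manifold having dimension $\dim E^{ss}+1$) and $\dim D=\dim M-1$ — shows that $N_{n-1}$ and $D\cap W^s_{loc}(\rho_Y)$ meet transversally in $D$, hence along a discrete set. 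Because $\mathrm{diam}(N_{n-1})<\varepsilon_0<\varepsilon_1$, and because $\varepsilon_0$ may be shrunk below the minimal distance in $D$ (uniform for $Y\in\cU_2$) between the finitely many fundamental domains $D\cap W^s_{loc}(\rho_Y)$, the set $N_{n-1}\cap\bigcup_\rho W^s_{loc}(\rho_Y)$ reduces to a single point $p$. Thus $N_{n-1}\cap V_i^Y=N_{n-1}\setminus\{p\}$.

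It then remains to check that removing one point from a submanifold of inner radius $r_{n-1}$ leaves inner diameter larger than $r_{n-1}/3$. Fixing an embedded ball $B=\exp_x(B(0,r_{n-1}))$ realizing the inner radius: if $p\notin\exp_x(B(0,r_{n-1}/3))$, that concentric sub-ball already lies in $N_{n-1}\setminus\{p\}$; otherwise, re-centering at $x'=\exp_x(v)$ with $|v|=2r_{n-1}/3$ produces an embedded ball $B(x',r_{n-1}/3)\subset B$ which avoids $p$, since radial geodesics of the near-flat $N_{n-1}$ are minimizing up to the inner radius and hence $d(x',p)\ge d(x',x)-d(x,p)>2r_{n-1}/3-r_{n-1}/3$. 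Either way $N_{n-1}\cap V_i^Y$ contains an embedded ball of radius $r_{n-1}/3$, so (again by near-flatness) its inner diameter exceeds $r_{n-1}/3$. The genuine content of the claim is the transversality step, but most of its cost was paid in advance when $\cC$, $\varepsilon_1$ and the cross section $D$ were set up, so that here one only combines those facts with the diameter bound on $N_{n-1}$; the remaining metric estimate is soft precisely because cu-sections are nearly flat, which is why only a factor $3$ is lost.
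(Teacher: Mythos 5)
Your proof is correct and follows essentially the same route as the paper: split into the two cases by which $V_i^Y$ was chosen, use the transversality of $\cC$ to $D\cap W^s_{loc}(\rho_Y)$ together with the small diameter of $N_{n-1}$ to conclude that at most one point is removed, and then show that a ball of radius $r_{n-1}$ minus one point still contains an embedded ball of radius $r_{n-1}/3$. Your explicit re-centering argument for the last step is a welcome addition, since the paper only gestures at a figure. Two remarks. First, much of what you re-derive is already packaged a few paragraphs earlier in the paper: the transversality of $\cC$ to $D\cap W^s_{loc}(\rho_Y)$ is the displayed Claim at the start of Section 4.3, and the statement that a cu-section of diameter $<\varepsilon_1$ meets each $W^s_{loc}(\rho_Y)$ in at most one point is exactly how $\varepsilon_1$ was chosen; you can simply cite those rather than redo the dimension count. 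Second, the detour showing that each $V_j^Y$ is disjoint from every $W^s_{loc}(\rho'_Y)$, and hence that $N_{n-1}\cap V_i^Y$ equals $N_{n-1}\setminus\bigcup_\rho W^s_{loc}(\rho_Y)$, is unnecessary: the inductive construction already guarantees $N_{n-1}\setminus W^s_{loc}(\rho_Y)\subset V_i^Y$ for the chosen singularity, so the lower bound $N_{n-1}\cap V_i^Y\supset N_{n-1}\setminus W^s_{loc}(\rho_Y)$ is immediate and is all that is needed. (The word \emph{diameter} in the Claim's statement should be read as \emph{radius}, which is what both your proof and the paper's actually establish and what is used in the estimate $r_{n-1}/3<r_n/10$ that follows.)
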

\begin{proof}
If $N_{n-1}\subset V_i^Y$, there is nothing to prove.
Otherwise by our choice of $\varepsilon_1>\varepsilon_0$,
the intersection $N_{n-1}\cap W^s_{loc}(\rho_Y)$ contains only one point.
Hence the inner radius of $N_{n-1}\setminus W^s_{loc}(\rho_Y)$
is larger than one third of the inner radius of $N_{n-1}$.
See Figure~\ref{f.section}.
\end{proof}
\begin{figure}
\begin{center}
\includegraphics[width=5cm,angle=0]{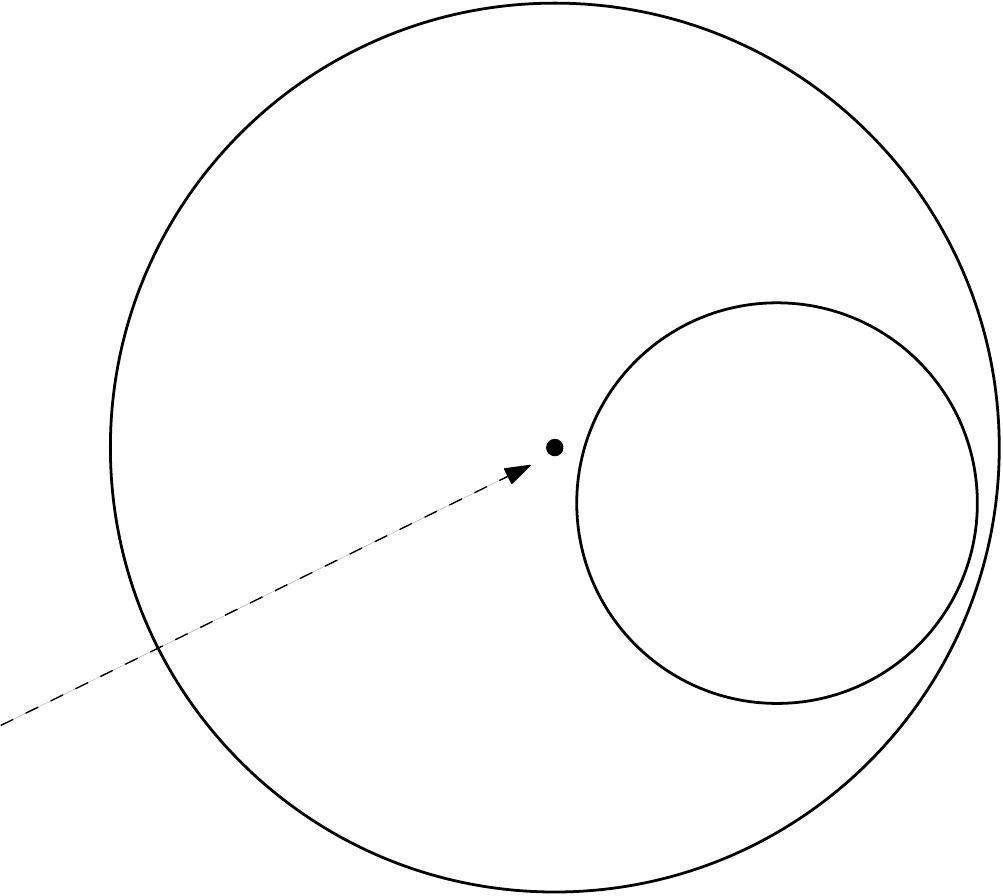}
\put(-95,90){\small $N_{n-1}$}
\put(-50,45){\small $\pi^{-1}_i(N_n)$}
\put(-220,20){\small  $N_{n-1}\cap W^s_{loc}(\rho)$}
\end{center}
\caption{Construction of $N_n$ from $N_{n-1}$.\label{f.section}}
\end{figure}

\item[b-] Considering the image $A_n:=\pi_i^Y(N_{n-1}\cap V_i^Y)$
and the inner radius $r_n$ of $A_n$.
\item[c-] Choosing $N_n$ as a subset of $A_n$ having the same inner radius as $A_n$
and satisfying $\diam(V_n)<3r_n$ (as explained in Section~\ref{s.cones}).
By Lemma~\ref{l.invariance-section}, $N_n$ is a cu-section.
\end{itemize}
By Lemma~\ref{l.expanding}, the holonomies are $10$-expanding,
hence any ball in $N_{n-1}\cap V_i^Y$ has radius smaller than $r_n/10$.
This gives $r_{n-1}/3<r_n/10$.
This implies that the sequence of radii increases until $r_n\geq \varepsilon_0$
(and then the construction stops).

By our choice of $\varepsilon,\varepsilon_0$,
the set $\cup\{\varphi_s^Y(N_n):\; |s|\leq \varepsilon_0\}$ contains a submanifold tangent to $\cC$
with inner diameter larger than $\varepsilon>0$.
This ends the proof of the Proposition~\ref{Prop:stable-dense} and of Theorem~\ref{Thm:stable-dense}.
\qed

\section{The proofs of main theorems}

We will first prove Theorem~\ref{Thm:main} in a generic setting.

\begin{Theorem A'}
There is a dense $G_\delta$ set $\cG\in{\cal X}^1(M)$ such that for any $X\in\cG$, any singular hyperbolic Lyapunov stable chain-recurrence class $C(\sigma)$ of $X$ is robustly transitive.

More precisely, there are neighborhoods ${\cal U}_X$ of $X$ and $U$ of $C(\sigma)$
such that the maximal invariant set of $U$ for any $Y\in{\cal U}_X$ is an attractor which coincides with $C(\sigma_Y)$.
If it is not an isolated singularity, it is a homoclinic class.

\end{Theorem A'}

\begin{proof}
We consider a dense G$_\delta$ set $\cG\in{\cal X}^1(M)$ whose elements satisfy the conclusions
of Proposition~\ref{Pro:generic} and Theorems~\ref{Thm:periodic-contained}, \ref{Thm:unstable-dense}, \ref{Thm:stable-dense}.

By Theorem~\ref{Thm:periodic-contained}, we know that $C(\sigma)$ is in fact an attractor for $\varphi^X$, hence it is locally maximal.
By Item~\ref{i.isolated} of Proposition~\ref{Pro:generic}, $C(\sigma)$ is robustly locally maximal, i.e. there are a neighborhood $\cU_X$ of $X$ and a neighborhood $U$ of $C(\sigma)$ such that $C(\sigma_Y)$ is the maximal invariant set in $U$ for any $Y\in\cU_X$.
Since $C(\sigma)$ is an attractor, we can assume that $\varphi_T^X(\overline U)\subset U$ for some $T>0$
and that the same holds for any $Y$ close to $X$. As a consequence, up to reduce $\cU_X$ and $U$,
the class $C(\sigma_Y)$ is also an attractor for $Y\in\cU_X$.
The remaining is to prove that $C(\sigma_Y)$ is a homoclinic class (if $C(\sigma)$ is not trivial).

By Theorem~\ref{Thm:periodic-contained}, and up to reduce $\cU_X$, there exists a periodic orbit
$\gamma\subset C(\sigma)$ such that $\gamma_Y\subset C(\sigma_Y)$ for any $Y\in \cU_X$.
Furthermore, the conclusions of Theorems~\ref{Thm:unstable-dense} and~\ref{Thm:stable-dense} hold.
Consider a point $x\in C(\sigma_Y)$. We will show that $x$ is accumulated by transverse homoclinic points of $\gamma_Y$.

\begin{Claim}
Any neighborhood $V_x$ of $x$ intersects $W^{u}_{Y}(\gamma_Y)$.

\end{Claim}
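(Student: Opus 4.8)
The plan is to prove that $W^u_Y(\gamma_Y)$ is dense in $C(\sigma_Y)$, which is exactly what this Claim asserts pointwise. The key input is Theorem~\ref{Thm:unstable-dense}: for any $x\in C(\sigma_Y)$, either $x$ lies on the unstable manifold of a singularity $\rho_{i,Y}$, or the local strong stable leaf $W^{ss}_{loc,Y}(x)$ meets $W^u_Y(\gamma_Y)$ transversally. So first I would treat the second (generic) case: if $W^{ss}_{loc,Y}(x)\pitchfork W^u_Y(\gamma_Y)\neq\emptyset$, then since strong stable leaves vary continuously and have uniformly bounded size, any neighborhood $V_x$ of $x$ contains a piece of $W^{ss}_{loc,Y}(z)$ for $z$ near $x$, and that leaf still meets $W^u_Y(\gamma_Y)$; hence $V_x\cap W^u_Y(\gamma_Y)\neq\emptyset$.

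Next I would handle the case $x\in W^u_Y(\rho_{i,Y})$ for some singularity $\rho_{i,Y}\in C(\sigma_Y)$. Here one uses the inclination lemma together with the fact that $W^u(\rho_{i,Y})\subset C(\sigma_Y)$ (this follows from $C(\sigma)$ being an attractor, via Item~\ref{i.strong-stable-outside-Y}/Item~\ref{i.unstable-Lyapunovstable}, so the whole unstable manifold of each interior singularity lies in the class). Points of $W^u(\rho_{i,Y})\setminus\{\rho_{i,Y}\}$ that are regular have, after a forward iterate, their strong stable leaf meeting $W^u_Y(\gamma_Y)$ — but more directly one can argue that $W^u(\rho_{i,Y})$ itself is accumulated by $W^u_Y(\gamma_Y)$: since $C(\sigma_Y)=H(\gamma_Y)$ contains $\rho_{i,Y}$ in its chain-recurrence class and $W^u(\rho_{i,Y})$ is not reduced to the singularity, one can find a regular point $x'\in W^u(\rho_{i,Y})$ arbitrarily close to $x$ whose strong-stable leaf meets $W^u_Y(\gamma_Y)$ transversally by the first case applied to $x'$ (using Theorem~\ref{Thm:unstable-dense} again — a regular point off the unstable manifolds of singularities, or one whose backward iterate is such). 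Thus again $V_x\cap W^u_Y(\gamma_Y)\neq\emptyset$.

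In fact the cleanest route is: by Theorem~\ref{Thm:unstable-dense}, write $C(\sigma_Y)=\bigl(\bigcup_i W^u(\rho_{i,Y})\bigr)\cup A$ where every point of $A$ has a strong-stable leaf meeting $W^u_Y(\gamma_Y)$. Points of $A$ are handled by the continuity argument above. A point $x$ on some $W^u(\rho_{i,Y})$: if $x=\rho_{i,Y}$, then since $C(\sigma_Y)$ is non-trivial and $\rho_{i,Y}$ is not isolated in it, $x$ is a limit of points of $C(\sigma_Y)\setminus\{\rho_{i,Y}\}$, and one reduces to points near but distinct from $\rho_{i,Y}$; if $x\neq\rho_{i,Y}$, then $x$ is regular and has a backward iterate near which Theorem~\ref{Thm:unstable-dense} gives a strong-stable leaf hitting $W^u_Y(\gamma_Y)$ — flowing forward and using the flow-invariance of $W^u_Y(\gamma_Y)$ and of the strong stable foliation (as in the Lemma following Theorem~\ref{Thm:unstable-dense}) brings this intersection arbitrarily close to $x$. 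Either way $V_x\cap W^u_Y(\gamma_Y)\neq\emptyset$.

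The main obstacle is the singularity case: one must make sure that being on $W^u(\rho_{i,Y})$ does not obstruct the conclusion, i.e. that even though the strong-stable leaf through $x$ itself might be tiny or badly positioned, a nearby point (or a forward image of a backward iterate) is reached by $W^u_Y(\gamma_Y)$. This is where one leans on the fact that $C(\sigma_Y)$ is a homoclinic class with $W^u(\rho_{i,Y})$ genuinely inside it, so that $W^u(\rho_{i,Y})\setminus\{\rho_{i,Y}\}$ consists of regular points to which Theorem~\ref{Thm:unstable-dense} applies, combined with the continuity and flow-invariance of the relevant objects.
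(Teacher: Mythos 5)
Your decomposition of $C(\sigma_Y)$ into points on the unstable manifolds of singularities versus the rest, and your identification of Theorem~\ref{Thm:unstable-dense} as the key input, match the paper. However both branches of your argument have a genuine gap, and in fact the two repair ideas you gesture at are exactly what the paper uses — you just attach them to the wrong cases.

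For the ``generic'' case (points $x$ not on any $W^u(\rho_{i,Y})$): knowing that $W^{ss}_{loc,Y}(x)$ meets $W^u_Y(\gamma_Y)$ somewhere does \emph{not} put a point of $W^u_Y(\gamma_Y)$ inside $V_x$ — the intersection point may sit at the far edge of the (uniformly sized) local strong-stable disk, and passing to a nearby leaf $W^{ss}_{loc,Y}(z)$ does not move that intersection into $V_x$. The correct argument, and the one the paper gives, is to note that $\alpha(x)$ is not a singleton singularity, apply Theorem~\ref{Thm:unstable-dense} to $\varphi^Y_{-t}(x)$ (which is likewise off the unstable manifolds of singularities), and then flow forward by time $t$: the forward flow contracts $W^{ss}_{loc,Y}(\varphi^Y_{-t}(x))$ into a small neighborhood of $x$, carrying the intersection with the invariant set $W^u_Y(\gamma_Y)$ into $V_x$ for $t$ large. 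You mention this ``backward iterate then flow forward'' idea, but only in the singularity case, where it does not apply.

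For the case $x\in W^u_Y(\rho_{i,Y})$: every backward iterate $\varphi^Y_{-t}(x)$ is again on $W^u_Y(\rho_{i,Y})$, so Theorem~\ref{Thm:unstable-dense} applied to it always returns the useless branch of the dichotomy. Likewise, approximating $x$ by regular points $x'\in W^u_Y(\rho_{i,Y})$ does not help: those $x'$ are still on the unstable manifold of the singularity, so the theorem is again vacuous for them. What is actually needed here is the inclination lemma applied to the transverse intersection $W^u_Y(\gamma_Y)\pitchfork W^s_Y(\rho_Y)$ (obtained from Item~\ref{i.transverse-close} of Proposition~\ref{Pro:generic} for $X$ and persisting to $Y$ by robustness of transversality): this makes $W^u_Y(\gamma_Y)$ accumulate on all of $W^u_Y(\rho_Y)$, in particular near $x$. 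You do name the inclination lemma but then set it aside in favor of the Theorem~\ref{Thm:unstable-dense}-based route, which fails for the reason just given. Note also that the hypothesis of the inclination lemma is the transverse heteroclinic intersection, not merely the inclusion $W^u(\rho_{i,Y})\subset C(\sigma_Y)$.
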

\begin{proof}[Proof of the Claim.]
If $\alpha(x)$ is not a single singularity, then by Theorem~\ref{Thm:unstable-dense},
the unstable manifold $W^{u}_{Y}(\gamma_Y)$ intersects $W^{ss}_{loc,Y}(\varphi^Y_{-t}(x))$ for any $t>0$.
By choosing $t>0$ large, one deduces that $W^{u}_{Y}(\gamma_Y)$ intersects $W^{ss}_{loc,Y}(x)$
at a point arbitrarily close to $x$.

Let us assume now that $\alpha(x)$ is a single singularity $\rho_Y$.
By the Item~\ref{i.transverse-close} of Proposition~\ref{Pro:generic},
the unstable manifold of $\gamma_Y$ intersect $W^{s}_{Y}(\rho_Y)$ transversely. Hence by the inclination lemma, there is a point $y\in V_x\cap W^{u}_{Y}(\gamma_Y)$.
The claim is verified in both cases.
\end{proof}

By Theorem~\ref{Thm:stable-dense}, the transverse intersection points
between $W^{u}_{Y}(\gamma_Y)$ and $W^{s}_{Y}(\gamma_Y)$ are dense in $W^{u}_{Y}(\gamma_Y)$.
This concludes that $C(\sigma_Y)$ is a homoclinic class. Since a homoclinic class is transitive, we have that $C(\sigma_Y)$ is transitive.
\end{proof}

We can now conclude the proof of the main results.

\begin{proof}[Proof of Theorems~\ref{Thm:main} and \ref{Thm:homoclinic-related}]
We first notice that these theorems hold for singular hyperbolic attractors
which do not contain any singularity and for isolated hyperbolic singularity:
in these cases the attractors are uniformly hyperbolic and the proof is classical.
In the following we only consider non-trivial classes which contain at least singularity.

%
%
Let $\cG$ be a dense $G_\delta$ subset of ${\cal X}^1(M)$ satisfying the conclusions of Theorem A',
Propositions~\ref{Pro:robust}, \ref{Pro:generic} and Theorems~\ref{Thm:unstable-dense}, \ref{Thm:stable-dense}.
For $X\in \cG$, the singularities are hyperbolic and finite
and there exists a neighborhood $\cU_X$ where the singularities admit a continuation,
satisfy Proposition~\ref{Pro:robust} and such that (as in Theorem A',
\ref{Thm:unstable-dense} and \ref{Thm:stable-dense})
the following property holds:
if $C(\sigma)$ is a (non-trivial) Lyapunov stable chain-recurrence class of a singularity $\sigma$ for $X$, then
there exists a periodic orbit $\gamma\subset C(\sigma)$ such that for any $Y\in \cU_X$:
\begin{itemize}
\item[--] the continuation $C(\sigma_Y)$ is a transitive attractor  and coincides with the homoclinic class $H(\gamma_Y)$,
\item[--] for any $x\in C(\sigma_Y)$ which does not belong to the unstable manifold of a singularity,
there exists a transverse intersection between $W^{ss}_Y(x)$ and $W^u_Y(\gamma_Y)$.
\item[--]  for any periodic orbit $\gamma'_Y\in C(\sigma_Y)$, there exists a transverse intersection between
$W^s_Y(\gamma_Y)$ and $W^u_Y(\gamma'_Y)$.
\end{itemize}
%
%
%
We define the dense open set
$$\cU=\bigcup_{X\in\cG}\cU_X.$$
Now we will verify that Theorem~\ref{Thm:main} holds in this open dense set $\cU$.

Take $Y\in\cU$, there is a vector field $X\in\cG$ such that $Y\in\cU_X$.
Consider a singular hyperbolic Lyapunov stable chain-recurrence class $C(\sigma_Y)$ of $Y$: it has the property that $W^u_Y(\sigma_Y)\subset C(\sigma_Y)$.
Hence by the first property of Proposition~\ref{Pro:robust}, we have that $W^u_X(\sigma_X)\subset C(\sigma_X)$.
Hence $C(\sigma_X)$ is Lyapunov stable by Item~\ref{i.unstable-Lyapunovstable} of Proposition~\ref{Pro:generic}.
By Theorem A', $C(\sigma_Y)$ is a robustly transitive attractor since $Y\in\cU_X$. Hence Theorem~\ref{Thm:main} holds.

Since $Y\in\cU_X$,
we also conclude that $C(\sigma_Y)=H(\gamma_Y)$ is a homoclinic class
and contain a dense subset of periodic points.
Moreover, for any periodic orbit $\gamma'_Y\subset C(\sigma_Y)$,
there is a transverse intersection between
$W^s_Y(\gamma_Y)$ and $W^u_Y(\gamma'_Y)$.
Considering any $x\in \gamma'_Y$, by Theorem~\ref{Thm:unstable-dense}, there also exists a transverse intersection between
$W^{ss}_Y(x)$ and $W^u_Y(\gamma_Y)$ because that $x$ is not contained in the unstable manifolds of singularities. The periodic orbits $\gamma_Y$ and $\gamma'_Y$ are thus
homoclinically related. Beeing homoclinically related is a transitive relation on hyperbolic periodic orbits
(by the inclination lemma), hence any two periodic orbits in $C(\sigma_Y)$
are homoclinically related. This concludes the proof of Theorem~\ref{Thm:homoclinic-related}.
\end{proof}

%
%
%
\begin{proof}[Proof of Corollary~\ref{Cor:dimension3}]
Now we assume that $\dim M=3$. We introduce the same open dense set $\cU$ as the one built in the proof of
Theorems~\ref{Thm:main} and \ref{Thm:homoclinic-related}.
Consider $Y\in\cU$ and a singular hyperbolic chain-recurrence class $C$.
If it is an isolated singularity or if it does not contain any singularity, then it is a uniformly hyperbolic set. Hence it is robustly transitive and a homoclinic class (by the shadowing lemma).

We can now assume that $C$ is non-trivial (i.e. it is not reduced to a single critical element) and that it contains a singularity $\sigma_Y$.
Since $\dim M=3$ and $C$ is non-trivial, the stable dimension of $\sigma_Y$ is $2$ or $1$. Assume for instance that it is $2$
(when it is $1$ we replace $X$ by $-X$).
By construction of $\cU$, there is a generic vector field $X\in \cG$ with a neighborhood $\cU_X$ such that $Y\in \cU_X$.
Since $C=C(\sigma_Y)$ is non-trivial,
by Proposition~\ref{Pro:robust}, item~\ref{Item:non-trivial---}, $C(\sigma_X)$ is non-trivial. Hence by the Item~\ref{i.dim3} of Proposition~\ref{Pro:generic},
$C(\sigma_X)$ is Lyapunov stable.
By definition of $\cU_X$ (introduced in the proof of Theorems~\ref{Thm:main} and \ref{Thm:homoclinic-related}),
the class $C(\sigma_Y)$ is a homoclinic class (hence transitive) for any $Y$ that is $C^1$-close to $X$.
%
%
%
%
%
%
\end{proof}

\vskip 5pt

\begin{tabular}{l l l}
\emph{\normalsize Sylvain Crovisier}
& \quad\quad \quad &
\emph{\normalsize Dawei Yang}
\medskip\\

\small Laboratoire de Math\'ematiques d'Orsay
&& \small School of Mathematical Sciences\\
\small CNRS - Universit\'e Paris-Sud
&& \small Soochow University\\
\small Orsay 91405, France
&& \small Suzhou, 215006, P.R. China\\
\small \texttt{Sylvain.Crovisier@math.u-psud.fr}
&& \small \texttt{yangdw1981@gmail.com}\\
&& \small \texttt{yangdw@suda.edu.cn}
\end{tabular}


\begin{thebibliography}{XXWZ}

%

\bibitem[AAB]{Abdenur-Avila-Bochi}
F. Abdenur, A. Avila and J. Bochi,
Robust transitivity and topological mixing for $C^1$-flows.
\emph{Proc. Amer. Math. Soc.} \emph{132} (2004), 699--705.


\bibitem[ABS]{ABS}
V. Afra\u\i movi\v c, V. Bykov, and L. Silnikov, The origin and structure of the Lorenz attractor.
 {\it Dokl.
Akad. Nauk SSSR} {\bf 234} (1977), 336--339.

\bibitem[AM]{araujo-melbourne}
V. Ara\'ujo and I. Melbourne,
Mixing properties and statistical limit theorems for singular hyperbolic flows without a smooth stable foliation.
\emph{Adv. Math.} \textbf{349} (2019), 212--245.

\bibitem[APa]{APa}
V. Ara\'ujo and M.J. Pacifico, \emph{Three-dimensional flows.}
Ergebnisse der Mathematik und ihrer Grenzgebiete \textbf{53}. Springer, Heidelberg (2010).

\bibitem[APPV]{APPV}
V. Ara\'ujo, M. Pacifico, E. Pujals and M. Viana, Singular-hyperbolic attractors are chaotic.
\emph{Trans. Amer. Math. Soc.} \textbf{361} (2009), 2431--2485.

\bibitem[ALM]{ALM}A. Arbieto, A. Lopez and C. Morales, Homoclinic classes for sectional-hyperbolic sets. {\it  Kyoto J. Math.} {\bf56} (2016), 531--538.

\bibitem[APu]{APu} A. Arroyo and E. Pujals, Dynamical properties of singular-hyperbolic attractors.
\emph{Discrete Contin. Dyn. Syst.} \textbf{19} (2007), 67--87. 


%
%
%

\bibitem[BM]{BM}S. Bautista and C. Morales, Recent progress on sectional-hyperbolic systems. {\it Dynamical Systems} {\bf30} (2015), 369--382. 

\bibitem[BC]{BC} C. Bonatti and S. Crovisier, R\'ecurrence et g\'en\'ericit\'e.
\textit{Invent. Math.}, {\bf 158} (2004) 33--104.

\bibitem[BdL]{BD} C. Bonatti and A. Da Luz,
{Star flows and multisingular hyperbolicity.} to appear in {\it J. Eur. Math. Soc.} \emph{ArXiv}:1705.05799.

%
%
%

\bibitem[BGY]{BGY}C. Bonatti, S. Gan and D. Yang, Dominated chain recurrent classes with singularities.
{\it Ann. Sc. Norm. Super. Pisa Cl. Sci.} {\bf 14} (2015), 83--99.

%
%
%
\bibitem[CdLYZ]{CDYZ}S. Crovisier, A. Da Luz, D. Yang and J. Zhang, On the notions of singular domination and (multi-)singular hyperbolicity.
\emph{Preprint} 2019.

%
%
\bibitem[CY1]{CY1} S. Crovisier and D.Yang, On the density of singular hyperbolic three-dimensional vector fields: a conjecture of Palis.
{\it C. R. Math. Acad. Sci. Paris} {\bf353} (2015), 85--88.

\bibitem[CY2]{CY2} S. Crovisier and D.Yang, {Homoclinic tangencies and singular hyperbolicity for three-dimensional vector fields.}
\emph{Preprint} \emph{ArXiv}:1702.05994.

\bibitem[dL]{dL} A. da Luz, {Star flows with singularities of different indices.}
{\it Preprint} \emph{ArXiv}:1806.09011.

%
%

\bibitem[FMT]{field-melbourne-torok}
M. Field, I. Melbourne and A. T\"or\"ok, Stability of mixing and rapid mixing for hyperbolic flows.
\textit{Ann. of Math.} \textbf{166} (2007), 269--291.


\bibitem[GY]{GY} S. Gan and D. Yang, \emph{Morse-Smale systems and horseshoes for three dimensional flows.} \emph{Ann. Sci. \'Ecole Norm. Sup.} {\bf51}(2018),  39--112.

\bibitem[G]{Gu}
J. Guchenheimer, A strange, strange attractor. {\it The Hopf bifurcation theorems and its applications. Applied
Mathematical Series} {\bf 19}. Springer-Verlag (1976), 368--381.
%
\bibitem[GW]{GW}
J. Guchenheimer and R. Williams, Structural stability of Lorenz attractors. {\it  Inst. Hautes \'Etudes Sci.
Publ. Math.} {\bf 50} (1979), 59--72.
%
%
%
%
%

\bibitem[K]{kupka}
I. Kupka. Contributions \`a la th\'eorie des champs g\'en\'eriques.
\textit{Contrib. Differential Equations} \textbf{2} (1963), 457--484.

%
%

%
%

\bibitem[L]{Lo}
E. N. Lorenz, Deterministic nonperiodic flow. {\it J. Atmosph. Sci.} {\bf 20} (1963), 130--141.
%
%
%
%
\bibitem[MM]{MeM08}
R. Metzger and C. Morales, Sectional-hyperbolic systems.  {\it Ergodic Theory Dynam. Systems} {\bf28} (2008), 1587--1597.

\bibitem[M]{M}
C. Morales, Existence of sectional-hyperbolic attractors for three-dimensional flows. \emph{SUT J. Math.}
\textbf{51} (2015), 227--249.

\bibitem[MP]{MP}
C. Morales and M. Pacifico,  A dichotomy for
three-dimensional vector fields. {\it Ergodic Theory and Dynamical
Systems} {\bf23} (2003), 1575--1600.

\bibitem[MPP]{MPP}C. Morales, M. Pacifico, and E. Pujals, Robust transitive singular sets for 3-flows are partially hyperbolic
attractors or repellers. {\it  Ann. of Math.} {\bf160} (2004), 375--432.
%
%
\bibitem[N]{newhouse}S. Newhouse, Hyperbolic limit sets. {\it Trans. Amer. Math. Soc.} {\bf167}(1972), 125--150.

%
%
%

\bibitem[PYY]{pyy}M. Pacifico, F. Yang and J. Yang, Entropy theory for sectional hyperbolic flows. \emph{Preprint} \emph{Arxiv}:1901.07436.


%
%
%
%
%

\bibitem[SGW]{SGW}
Y. Shi, S. Gan and L. Wen, On the singular-hyperbolicity of star flows.
\emph{J. Mod. Dyn.} \textbf{8} (2014), 191--219.

\bibitem[S]{smale} S. Smale,
Stable manifolds for differential equations and diffeomorphisms.
\textit{Ann. Scuola Norm. Sup. Pisa} \textbf{17} (1963), 97--116.


%
%
%
%


\bibitem[ZGW]{ZGW08}
S. Zhu, S. Gan and L. Wen, Indices of singularities of robustly transitive sets.
{\it Disc. Cont. Dynam. Syst.} {\bf 21} (2008), 945--957.
\end{thebibliography}
\end{document}